\tikzset{
commutative diagrams/.cd,
arrow style=tikz,
diagrams={>=latex}}
\theoremstyle{definition}
\newtheorem{definition}{Definition}[section]
\newtheorem{proposition}{Proposition}[section]
\newtheorem{corollary}{Corollary}[section]
\newtheorem{lemma}{Lemma}[section]
\newtheorem{remark}{Remark}[section]
\newtheorem{theorem}{Theorem}[section]
\newtheorem*{theorem*}{Theorem}
\newcommand{\icg}{\mathsf{ICG}}
\newcommand{\mf}{\mathcal{F}}
\newcommand{\tr}{\mathfrak{tr}}
\newcommand{\sder}{\mathfrak{sder}}
\newcommand{\grt}{\mathfrak{grt}}
\newcommand{\frakt}{\mathfrak{t}}
\newcommand{\id}{\text{Id}}
\newcommand{\im}{\text{im}}
\newcommand{\graphs}{\mathsf{graphs}}
\newcommand{\GC}{\mathsf{GC}_2}
\newcommand{\Gra}{\mathsf{Gra}_2}
\newcommand{\lie}{\mathfrak{lie}}
\newcommand{\ad}{\text{ad}}
\newcommand{\depth}{\text{depth}}
\newcommand{\wt}{\text{wt}}
\newcommand{\poly}{\mathbb{K}[x,y,z]}
\newcommand{\polyodd}{\mathbb{K}[x,y,z]^{\text{odd}}}
\newcommand{\polyeven}{\mathbb{K}[x,y,z]^{\text{even}}}
\newcommand{\val}{\text{val}}
\title{FIltrations on Graph Complexes and the Grothendieck-Teichm\"uller Lie algebra in depth two}
\author{Matteo Felder}
\begin{document}

\begin{abstract}
We establish an isomorphism between the Grothendieck-Teichm\"uller Lie algebra $\grt_1$ in depth two modulo higher depth and the cohomology of the two-loop part of the graph complex of internally connected graphs $\icg(1)$. In particular, we recover all linear relations satisfied by the brackets of the conjectural generators $\sigma_{2k+1}$ modulo depth three by considering relations among two-loop graphs.

The Grothendieck-Teichm\"uller Lie algebra is related to the zeroth cohomology of M. Kontsevich's graph complex $\GC$ via T. Willwacher's isomorphism. We define a descending filtration on $H^0(\GC)$ and show that the degree two components of the corresponding associated graded vector spaces are isomorphic under T. Willwacher's map.
\end{abstract}

\address{%
Matteo Felder\\
Dept. of Mathematics\\
University of Geneva\\
2-4 rue du Li\`evre\\
1211 Geneva 4\\
Switzerland\\            
Matteo.Felder@unige.ch 
}

\maketitle
\tableofcontents

\section*{Introduction}

The Grothendieck-Teichm\"uller Lie algebra $\grt_1$ was introduced by V. Drinfeld \cite{Drinfeld1991}. Although being widely studied it still remains a somewhat mysterious object. It is spanned by series of Lie words in two variables $x$ and $y$ satisfying an antisymmetry, a hexagon and a pentagon equation. Its Lie bracket is given by,
\begin{equation*}
\{\psi_1,\psi_2\}:=D_{\psi_1}\psi_2-D_{\psi_2}\psi_1+[\psi_1,\psi_2],
\end{equation*}
where $[\psi_1,\psi_2]$ denotes the bracket on the completed free Lie algebra in $x$ and $y$, and $D_{\psi}$ is the unique derivation extending $x\mapsto 0$, $y\mapsto [y,\psi(x,y)]$. Moreover, $\grt_1$ is graded by weight (i.e. the total number of $x$ and $y$ of any Lie word) and filtered by depth (i.e. the minimal number of $y$ in any Lie word appearing in a series). Also, it is well-known that $\grt_1$ contains, for $k\geq 1$, elements $\sigma_{2k+1}$ of odd weight whose coefficient in front of the Lie word $\ad^{2k}_x(y)$ is non-vanishing (in particular, it can be normalized to be equal to one). An important result by F. Brown states that these elements freely generate a Lie subalgebra of $\grt_1$ \cite{Brown2012}. Conjecturally however, the free Lie algebra generated by the $\sigma_{2k+1}$ should coincide with $\grt_1$. 

Any Lie monomial appearing in the bracket of two such conjectural generators $\{\sigma_{2i+1},\sigma_{2j+1}\}$ contains at least two $y$. Modulo elements of depth three and higher, these brackets satisfy a set of linear relations of homogeneous weight. For instance in weight 12,
\begin{equation*}
\{\sigma_3,\sigma_9\}=3\{\sigma_5,\sigma_7\} \mod \mf^3\grt_1.
\end{equation*}
Linear relations of this sort where first studied by Y. Ihara and N. Takao \cite{Ihara2002}, and independently by A. Goncharov \cite{Goncharov2001}. In addition, Y. Ihara and N. Takao showed that modulo depth three, the brackets $\{\sigma_{2i+1},\sigma_{2j+1}\}$ actually generate the whole quotient,
\begin{equation*}
\mf^2\grt_1/\mf^3\grt_1.
\end{equation*}
Later, L. Schneps fully classified such linear identities by relating them to restricted even period polynomials associated to cusp forms on $\text{SL}_2(\mathbb{Z})$ \cite{Schneps2006}. The aim of this text is twofold. First, we add one more interpretation of the space spanned by the brackets $\{\sigma_{2i+1},\sigma_{2j+1}\}$ modulo $\mf^3\grt_1$, namely via the cohomology of a certain graph complex.

For this, consider the complex $(C,d_0)$,
\begin{equation*}
0\rightarrow C_0\overset{d_0}{\rightarrow} C_1\overset{d_0}{\rightarrow} C_2\rightarrow 0
\end{equation*}
where the graphs in $C_i$ are as depicted in Figure \ref{figure:Ccomplexintro}. The differential $d_0$ sums over the three possible ways of splitting the 4-valent vertices, thus attaching an additional ``hair" to each strand making up the theta shape of the graph. The cohomology of this complex was computed in \cite{Conant2005}, with the aim of determining the cohomology of the two-loop part of a larger complex introduced by G. Arone and V. Turchin to study the rational homotopy type of long embeddings \cite{AroneTurchin2015}. In degree one the cohomology can be identified with a quotient of the algebra of even polynomials in three variables. Moreover, we find that graphs $\theta_{2i,2j}$ for which only two of the main strand are decorated by $2i$ and $2j$ ``hair", respectively, form a generating set for $H^1(C,d_0)$. Our first main result then states the following.
\begin{theorem*}
The map
\begin{align*}
\mf^2\grt_1/\mf^3\grt_1&\rightarrow H^1(C,d_0)\\
\{\sigma_{2i+1},\sigma_{2j+1}\} \mod \mf^3\grt_1&\mapsto \theta_{2i,2j}
\end{align*}
is an isomorphism of vector spaces.
\end{theorem*}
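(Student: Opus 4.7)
The plan is to realize both sides of the asserted isomorphism as graded pieces of a common filtered object, rather than constructing the map ad hoc on brackets. The natural framework is (a refinement of) T. Willwacher's isomorphism identifying $\grt_1$ with the zeroth cohomology of a graph complex of internally connected graphs $\icg(1)$. This embeds $\grt_1$ into a complex that carries an obvious loop-order filtration, i.e. filtration by the first Betti number of the underlying graph. The first step is to verify that, under this isomorphism, the depth filtration on $\grt_1$ pulls back to the loop-order filtration on the graph side, so that the subquotient $\mf^2\grt_1/\mf^3\grt_1$ is identified with the 2-loop contribution, which in turn is the cohomology of the complex $(C,d_0)$.

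With the map thus in place, \emph{surjectivity} is immediate from the assertion (already stated in the introduction) that the classes $[\theta_{2i,2j}]$ generate $H^1(C,d_0)$; one need only verify that under the constructed map the bracket $\{\sigma_{2i+1},\sigma_{2j+1}\}$ is represented by $\theta_{2i,2j}$. This amounts to a direct computation using the defining property of $\sigma_{2k+1}$ that its coefficient in front of the Lie word $\ad_x^{2k}(y)$ is nonzero, together with the explicit form of Willwacher's map on such Lie words.

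For \emph{injectivity}, the Ihara–Takao theorem ensures the domain is spanned by the brackets $\{\sigma_{2i+1},\sigma_{2j+1}\}$, and Schneps' theorem puts the linear relations among them (modulo $\mf^3\grt_1$) in bijection with the space of restricted even period polynomials of cusp forms on $\text{SL}_2(\mathbb{Z})$. On the graph side, Conant's computation expresses $H^1(C,d_0)$ as a quotient of the space of even polynomials in three variables by an explicit coboundary ideal. The plan is to identify these two descriptions: show that, under the match $[\theta_{2i,2j}] \leftrightarrow \{\sigma_{2i+1},\sigma_{2j+1}\}$, Conant's coboundary ideal corresponds exactly to the space of Schneps-type period polynomial relations.

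The main obstacle is this last identification. Matching Conant's even-polynomial picture of $H^1(C,d_0)$ with Schneps' period polynomial picture of the bracket relations requires careful tracking of the $S_3$-symmetry permuting the three strands of a theta graph, of sign conventions and normalizations, and of the precise polynomial form of the coboundary $d_0$. A secondary technical point is the verification that the depth filtration and the loop-order filtration correspond under the graph-complex model of $\grt_1$, which requires analyzing Willwacher's construction on the specific Lie words $\ad_x^{2k}(y)$ where the generators $\sigma_{2k+1}$ are normalized.
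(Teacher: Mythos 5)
There is a genuine gap, and it sits in what you call the ``first step'' and later dismiss as a ``secondary technical point.'' You propose to realize both sides as graded pieces of a common filtered object by (a) identifying $\grt_1$ with the cohomology of $\icg(1)$ and (b) verifying that the depth filtration corresponds to the loop-order filtration. Neither is available. Willwacher's isomorphism is $H^0(\GC)\cong\grt_1$ for Kontsevich's complex $\GC$, not for $\icg(1)$; the map $(-)_1:\GC\to\graphs(1)$ is not even a chain map, and $H^0(\icg(1))$ is not $\grt_1$. More seriously, on $\GC$ the loop order of a degree-zero graph is determined by the number of vertices, i.e.\ it tracks the \emph{weight} of the corresponding element of $\grt_1$, not its depth; so ``depth $=$ loop order'' is the wrong dictionary. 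The filtration that the paper proposes as a candidate for depth is the subtler one by $\#\text{vertices}-\text{maximal valence}$, and the compatibility of that filtration with the depth filtration is precisely the paper's \emph{second} main theorem, proved only in depth two and only by \emph{using} the theorem you are asked to prove (its proof ends by composing $\Omega$ with $\Phi^{-1}$, where $\Phi$ is the map of the present statement). In general this compatibility is stated as an open problem. So your scaffolding is either unavailable or circular.

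The salvageable part of your proposal is the second half. The paper's actual proof is direct and purely algebraic: the domain is spanned by the brackets $\{\sigma_{2i+1},\sigma_{2j+1}\}$ by Ihara--Takao, so one only needs to show that a linear combination vanishes modulo $\mf^3\grt_1$ if and only if the corresponding combination of $\theta_{2i,2j}$ vanishes in $H^1(C,d_0)$. By Schneps' theorem the former is equivalent to $G=\sum a_i(\alpha-\beta)^{2i}(\beta-\gamma)^{k-2-2i}$ satisfying $G+(13).G=0$ and $G+(123).G+(132).G=0$; the substitution $x\mapsto\alpha-\beta$, $y\mapsto\beta-\gamma$, $z\mapsto\gamma-\alpha$ identifies $\polyeven/\bigl((x+y+z)\cap\polyeven\bigr)$ with $\mathbb{K}[\alpha-\beta,\beta-\gamma]^{\text{even}}$, and passing to $S_3$-(co)invariants turns the second Schneps condition into exactly the vanishing of $\sum a_i\,\theta_{2i,k-2-2i}$ in $H^1(C,d_0)$. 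Since every step is an isomorphism this gives well-definedness and injectivity at once, and surjectivity follows because the $\theta_{2i,2j}$ generate $H^1(C,d_0)$. This is essentially your ``main obstacle'' paragraph carried out; if you drop the filtration framework and execute that comparison directly, you recover the paper's argument.
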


In particular, there is a canonical equivalence between the linear relations satisfied by the brackets $\{\sigma_{2i+1},\sigma_{2j+1}\}$ modulo $\mf^3\grt_1$ and the cohomology classes represented by the graphs $\theta_{2i,2j}$. The proof of this theorem follows from one of the classification results of L. Schneps \cite{Schneps2006}. 

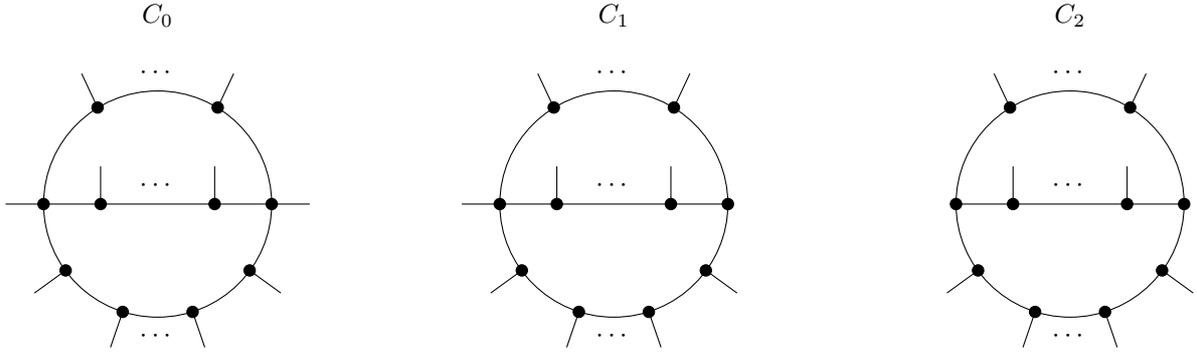
\begin{figure}[ht]
\centering
\begin{tikzpicture}

\node at (0,2.5) {$C_0$};
\node at (6,2.5) {$C_1$};
\node at (12,2.5) {$C_2$};

\node at (0,1.75) {$\cdots$};
\node at (0,0.25) {$\cdots$};
\node at (0,-1.75) {$\cdots$};

\draw (0,0) circle (1.5cm);
\draw(-1.5,0) -- (1.5,0);

\draw[black,fill=black](0.79,1.28) circle (0.075cm);
\draw[black,fill=black](-0.79,1.28) circle (0.075cm);

\draw[black,fill=black](0.75,0) circle (0.075cm);
\draw[black,fill=black](-0.75,0) circle (0.075cm);

\draw[black,fill=black](1.21,-0.88) circle (0.075cm);
\draw[black,fill=black](-1.21,-0.88) circle (0.075cm);
\draw[black,fill=black](0.46,-1.43) circle (0.075cm);
\draw[black,fill=black](-0.46,-1.43) circle (0.075cm);

\draw[black,fill=black](-1.5,0) circle (0.075cm);
\draw[black,fill=black](1.5,0) circle (0.075cm);

\draw(1.5,0) -- (2,0);
\draw(-2,0) -- (-1.5,0);

\draw(0.75,0) -- (0.75,0.5);
\draw(-0.75,0) -- (-0.75,0.5);

\draw(0.79,1.28) -- (1,1.73);
\draw(-0.79,1.28) -- (-1,1.73);

\draw(1.21,-0.88) -- (1.62,-1.18);
\draw(-1.21,-0.88) -- (-1.62,-1.18);
\draw(0.46,-1.43) -- (0.62,-1.9);
\draw(-0.62,-1.9) -- (-0.46,-1.43);


\node at (0+6,1.75) {$\cdots$};
\node at (6,0.25) {$\cdots$};
\node at (6,-1.75) {$\cdots$};

\draw (6,0) circle (1.5cm);
\draw(4.5,0) -- (7.5,0);

\draw[black,fill=black](6.79,1.28) circle (0.075cm);
\draw[black,fill=black](5.21,1.28) circle (0.075cm);

\draw[black,fill=black](6.75,0) circle (0.075cm);
\draw[black,fill=black](-0.75+6,0) circle (0.075cm);

\draw[black,fill=black](7.21,-0.88) circle (0.075cm);
\draw[black,fill=black](4.79,-0.88) circle (0.075cm);
\draw[black,fill=black](6.46,-1.43) circle (0.075cm);
\draw[black,fill=black](5.54,-1.43) circle (0.075cm);

\draw[black,fill=black](4.5,0) circle (0.075cm);
\draw[black,fill=black](7.5,0) circle (0.075cm);

\draw(4.5,0) -- (4,0);

\draw(6.79,1.28) -- (7,1.73);
\draw(5.21,1.28) -- (5,1.73);

\draw(0.75+6,0) -- (0.75+6,0.5);
\draw(-0.75+6,0) -- (-0.75+6,0.5);

\draw(7.21,-0.88) -- (7.62,-1.18);
\draw(4.79,-0.88) -- (4.38,-1.18);
\draw(6.46,-1.43) -- (6.62,-1.9);
\draw(5.38,-1.9) -- (5.54,-1.43);


\node at (0+12,1.75) {$\cdots$};
\node at (12,0.25) {$\cdots$};
\node at (12,-1.75) {$\cdots$};

\draw (12,0) circle (1.5cm);
\draw(4.5+6,0) -- (7.5+6,0);

\draw[black,fill=black](6.79+6,1.28) circle (0.075cm);
\draw[black,fill=black](5.21+6,1.28) circle (0.075cm);

\draw[black,fill=black](0.75+12,0) circle (0.075cm);
\draw[black,fill=black](-0.75+12,0) circle (0.075cm);

\draw[black,fill=black](7.21+6,-0.88) circle (0.075cm);
\draw[black,fill=black](4.79+6,-0.88) circle (0.075cm);
\draw[black,fill=black](6.46+6,-1.43) circle (0.075cm);
\draw[black,fill=black](5.54+6,-1.43) circle (0.075cm);

\draw[black,fill=black](4.5+6,0) circle (0.075cm);
\draw[black,fill=black](7.5+6,0) circle (0.075cm);

\draw(6.79+6,1.28) -- (7+6,1.73);
\draw(5.21+6,1.28) -- (11,1.73);

\draw(0.75+12,0) -- (0.75+12,0.5);
\draw(-0.75+12,0) -- (-0.75+12,0.5);

\draw(7.21+6,-0.88) -- (7.62+6,-1.18);
\draw(4.79+6,-0.88) -- (4.38+6,-1.18);
\draw(6.46+6,-1.43) -- (6.62+6,-1.9);
\draw(5.38+6,-1.9) -- (5.54+6,-1.43);

\end{tikzpicture}
\caption{Graphs in $C_0$, $C_1$, $C_2$.}\label{figure:Ccomplexintro}
\end{figure}

In a second part, we show that the equivalence between the cohomology of two-loop graphs and the Grothendieck-Teichm\"uller Lie algebra in depth two modulo higher depth is no coincidence. For instance, $\grt_1$ is related to graphs via T. Willwacher's important result \cite{Willwacher2014} which states that there is an isomorphism of Lie algebras,
\begin{equation*}
H^0(\GC)\cong \grt_1,
\end{equation*}
where $\GC$ denotes a variant of Kontsevich's graph complex. Our aim is to define a filtration on $H^0(\GC)$ which is compatible with the depth filtration on $\grt_1$ under T. Willwacher's isomorphism. Our second main result states that this is can be done in depth two modulo higher depth.
\begin{theorem*}
T. Willwacher's map induces an isomorphism,
\begin{equation*}
\mf^2H^0(\GC)/\mf^3H^0(\GC) \overset{\cong}{\longrightarrow}\mf^2\grt_1/\mf^3\grt_1.
\end{equation*}
\end{theorem*}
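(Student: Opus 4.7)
The plan is to use Theorem~1 as a bridge. Concretely, Theorem~1 provides $\mf^2\grt_1/\mf^3\grt_1 \cong H^1(C,d_0)$, so it suffices to construct an intrinsic isomorphism
\begin{equation*}
\mf^2 H^0(\GC)/\mf^3 H^0(\GC) \cong H^1(C,d_0)
\end{equation*}
on the graph-complex side, and then to verify that its composition with the isomorphism of Theorem~1 agrees with the map induced by Willwacher's isomorphism on the associated graded pieces.

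First I would fix the filtration $\mf^{\bullet} H^0(\GC)$. A natural candidate is a filtration by a graph-theoretic invariant (most plausibly first Betti number, possibly computed on a hairy enlargement of $\GC$) that is a priori compatible with the depth filtration on $\grt_1$. One must check that this filtration is well-defined on cohomology and that Willwacher's isomorphism maps $\mf^k H^0(\GC)$ into $\mf^k \grt_1$ modulo $\mf^{k+1}$. Next, to identify the associated graded piece $\mf^2 H^0(\GC)/\mf^3 H^0(\GC)$ with $H^1(C,d_0)$, I would relate the two-loop subcomplex of a hairy enlargement of $\GC$ to $(C,d_0)$ by an edge-opening chain map (so that closed two-loop graphs become theta-shaped graphs with hairs), and show this chain map is a quasi-isomorphism in the relevant degree. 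Combining with Theorem~1 then produces the desired isomorphism of associated graded pieces.

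The main obstacle is the final compatibility check. Willwacher's isomorphism is not manifestly local on the loop-order strata of $\GC$; it arises from a graphical implementation of Kontsevich's formality and carries intricate combinatorial weights. Verifying that on the two-loop piece it factors through the intrinsic identification with $H^1(C,d_0)$ requires a careful analysis of how Willwacher's map interacts with edge-opening on two-loop graphs, and to match the resulting normalization with the explicit map $\{\sigma_{2i+1},\sigma_{2j+1}\} \mapsto \theta_{2i,2j}$ from Theorem~1. This matching of coefficients, rather than the purely formal setup, is the technical heart of the proof.
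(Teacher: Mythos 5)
Your overall skeleton --- factor the induced map through an isomorphism $\mf^2H^0(\GC)/\mf^3H^0(\GC)\cong H^1(C,d_0)$ and then invoke Theorem \ref{Theorem:Main} --- is exactly the paper's strategy. But there are two genuine gaps. First, the filtration itself is the crux of the statement and you never actually produce it; your candidate (first Betti number) cannot work. For a degree-zero graph in $\GC$ with $N$ vertices one has $\#\text{edges}=2N-2$, so the loop order is $N-1$ and grows with the weight: the wheel $w_{2k+1}$ has loop order $2k+1$, whereas it must sit in filtration degree $1$ if the filtration is to match depth under $\omega$. The paper instead filters by $\#\text{vertices}-\text{maximal valence}$, and the nontrivial content is the lemma that this is compatible with the Lie bracket, $[\mf^p\GC,\mf^q\GC]\subset\mf^{p+q}\GC$ (the potential violating terms, where the two maximal-valence vertices are glued onto each other, cancel in pairs by antisymmetry). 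Without this specific choice and this lemma, the quotient $\mf^2H^0(\GC)/\mf^3H^0(\GC)$ in the statement is not even defined, and the space of brackets $[s_{2i+1},s_{2j+1}]$ does not a priori land in filtration degree $2$. Relatedly, your ``edge-opening chain map'' is realized in the paper by the explicit operation $(-)_1$ of marking one vertex as external, which fails to be a chain map on the nose but becomes one after projecting to the two-loop part of $\icg(1)$; identifying which graphs survive in $\mf^2/\mf^3$ (the ``bowtie''-type graphs) is a concrete valence-counting argument you would still need to supply.

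Second, you locate the technical heart in the wrong place. No analysis of the combinatorial weights in Willwacher's formality morphism is needed. The paper sidesteps this entirely: by Willwacher's Proposition 9.1 the classes $s_{2k+1}$ with $\omega(s_{2k+1})=\sigma_{2k+1}$ have leading term the wheel $w_{2k+1}$, hence lie in $\mf^1\setminus\mf^2$; since $\omega$ is a morphism of Lie algebras, $\omega([s_{2i+1},s_{2j+1}])=\{\sigma_{2i+1},\sigma_{2j+1}\}$. One then checks that $(-)_1$ sends $\overline{[s_{2i+1},s_{2j+1}]}$ to a class cohomologous to (a multiple of) $\theta_{2i,2j}$ (via the explicit coboundary $d_0E_{2i,2j}=D_{2i+1,2j+1}+4\theta_{2i,2j}$), that these brackets span $\mf^2H^0(\GC)/\mf^3H^0(\GC)$, and that by Ihara--Takao the brackets $\{\sigma_{2i+1},\sigma_{2j+1}\}$ span $\mf^2\grt_1/\mf^3\grt_1$. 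Compatibility with Theorem \ref{Theorem:Main}, which also sends $\{\sigma_{2i+1},\sigma_{2j+1}\}\mapsto\theta_{2i,2j}$, is then automatic on generators. Your proposal leaves this step as an open ``obstacle,'' so as written the argument is incomplete precisely where it needs to close.
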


In the proof of this theorem we show that on the degree two part of the associated graded, the map induced by T. Willwacher's isomorphism factors through an isomorphism,
\begin{equation*}
\mf^2H^0(\GC)/\mf^3H^0(\GC)\overset{\cong}{\longrightarrow} H^1(C,d_0).
\end{equation*}
Together with our first main result, this implies the statement. 

The hope at this point is that the reasoning above can be generalized. More precisely, we expect that the filtration on $H^0(\GC)$ is defined in such a way that we have an isomorphism on the entire associated graded. Additionally, the degree $p$ part of the associated graded could then ideally be identified with the cohomology of the $p$-loop part of a graph complex, similar to to the one of which $C$ is a subcomplex, i.e
\begin{equation*}
\mf^p H^0(\GC)/\mf^{p+1} H^0(\GC)\cong\text{ ``cohomology of } p \text{-loop graphs"}.
\end{equation*}
This would in turn allow us to study the Grothendieck-Teichm\"uller Lie algebra's relations in higher depth by computing the cohomology of the space of higher loop order graphs. Possibly, such a cohomology could again be expressed in terms of some quotient of an algebra of polynomials, thus simplifying some of the relations found in the associated graded with respect to the depth filtration on $\grt_1$. For now, however, these questions remain open.

\section*{Acknowledgements} 
I am very grateful to my advisor Thomas Willwacher for his guidance and for numerous useful discussions and suggestions. I thank Anton Alekseev for his interest in the problem and his constant support. Discussions with Ricardo Campos, Florian Naef and Elise Raphael have also been extremely helpful, and I thank them for their encouragement. This work was supported by the grant MODFLAT of the European Research Council (ERC).

\section{The Grothendieck-Teichm\"uller Lie algebra}

\subsection{Definitions}
We recall the definition of the Grothendieck-Teichm\"uller Lie algebra $\grt_1$. It was first introduced by V. Drinfeld \cite{Drinfeld1991}.
\begin{definition}
The \emph{Drinfeld-Khono Lie algebra} $\frakt_n$ is generated by elements $t^{i,j}=t^{j,i}$, where $1\leq i,j \leq n$, which satisfy the relations,
\begin{align*}
[t^{i,j},t^{k,l}]=&0 \text{ if } \#\{i,j,k,l\}=4,\\
[t^{i,j}+t^{i,k},t^{j,k}]=&0 \text{ if } \#\{i,j,k\}=3.
\end{align*}
\end{definition}
\begin{definition}
The \emph{Grothendieck-Teichm\"uller Lie algebra} $\grt_1$ is spanned by elements $\psi\in \widehat{\lie_2}$ (the completed free Lie algebra in two generators $x,y$) satisfying the following relations:
\begin{align*}
\psi(x,y)&=-\psi(y,x)\\
\psi(x,y)+\psi(y,z)+\psi(z,x)&=0 \text{ for } x+y+z=0\\
\psi(t^{1,2},t^{2,3}+t^{2,4})+\psi(t^{1,3}+t^{2,3},t^{3,4})&=\psi(t^{2,3},t^{3,4})+\psi(t^{1,2}+t^{1,3},t^{2,4}+t^{3,4})+\psi(t^{1,2},t^{2,3})
\end{align*}
where the last equation takes values in the Lie algebra $\mathfrak{t}_4$. The bracket on $\grt_1$ is given by
\begin{equation*}
\{\psi_1,\psi_2\}:=D_{\psi_1}\psi_2-D_{\psi_2}\psi_1+[\psi_1,\psi_2],
\end{equation*}
where $D_{\psi}$ is the unique derivation extending $x\mapsto 0$, $y\mapsto [y,\psi(x,y)]$.
\end{definition}
\begin{remark}
It is well-known that $\grt_1$ contains elements $\sigma_{2k+1}$, for $k\geq 1$, for which the coefficient in front of the Lie word $\ad_x^{2k}(y)$ is non-vanishing. As it is a common convention, we shall assume this coefficient to be $1$. It has been shown by F. Brown that the Lie algebra freely generated by these $\sigma_{2k+1}$ forms a Lie subalgebra of $\grt_1$ \cite{Brown2012}. In fact, it is conjectured that these elements freely generate $\grt_1$ as a Lie algebra.
\end{remark}

\begin{definition}
Let $a(x,y)\in \lie_2$. The \emph{depth} $\depth(a)$ of $a$ is the minimal number of $y$'s contained in any of its Lie monomials. Let $b$ be a Lie word in $x,y$. The \emph{weight} $\wt(b)$ is the total number of $x$ and $y$ in $b$.
\end{definition}
Define the following descending filtration on $\grt_1$ by
\begin{equation*}
\mf^p\grt_1:=\{\psi\in \grt_1| \depth(\psi)\geq p\}
\end{equation*}
for $p\geq 1$. Note that $\mf^1\grt_1=\grt_1$. It is not hard to see that the filtration is compatible with the Lie algebra structure, i.e. for all $p,q\geq 1$,
\begin{align*}
\{\mf^p\grt_1,\mf^q\grt_1\}\subset& \mf^{p+q}\grt_1.
\end{align*}

\begin{remark}
Note that the weight defines a grading on $\grt_1$. For $k\geq 2$, we denote by $[\grt_1]_k$ the subspace of $\grt_1$ spanned by homogeneous elements of weight $k$.
\end{remark}

\subsection{Relations among the conjectural generators}

Next, we recall some useful results on the brackets of the conjectural generators. Consider the composition, 
\begin{equation*}
\bigoplus\limits_{i,j\geq 1} \mathbb{K}\cdot\{\sigma_{2i+1},\sigma_{2j+1}\}\overset{i}{\hookrightarrow}\mf^2\grt_1\overset{\pi}{\twoheadrightarrow}\mf^2\grt_1/\mf^3\grt_1.
\end{equation*}
Y. Ihara and N. Takao proved the following.
\begin{theorem}[\cite{Ihara2002}, Corollary to Theorem III-II-1.]\label{Thm:Ihara}
In depth two modulo depth three the brackets of the conjectural generators span the whole of $\grt_1$. That is, the composition,
\begin{align*}
\pi\circ i: \bigoplus\limits_{i,j\geq 1}\mathbb{K} \cdot\{\sigma_{2i+1},\sigma_{2j+1}\}&\rightarrow \mf^2\grt_1/\mf^3\grt_1\\
\{\sigma_{2i+1},\sigma_{2j+1}\}&\mapsto \{\sigma_{2i+1},\sigma_{2j+1}\}\mod \mf^3\grt_1
\end{align*}
is surjective.
\end{theorem}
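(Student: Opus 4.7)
My plan is to study the associated graded Lie algebra $\mathrm{gr}^{\mf}\grt_1 := \bigoplus_{p\geq 1} \mf^p\grt_1/\mf^{p+1}\grt_1$ with respect to the depth filtration. Since $\{\mf^p\grt_1,\mf^q\grt_1\} \subset \mf^{p+q}\grt_1$, it becomes a bi-graded (by weight and depth) Lie algebra, and the classes of the $\sigma_{2k+1}$ sit in $\mathrm{gr}^1\grt_1$. The image of the composition $\pi\circ i$ in the theorem is precisely the image of the induced bracket map
\begin{equation*}
\mathrm{gr}^1\grt_1\otimes \mathrm{gr}^1\grt_1 \longrightarrow \mathrm{gr}^2\grt_1,
\end{equation*}
so the theorem is equivalent to showing this map is surjective in each (necessarily even) weight.

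First I would pin down the depth-one graded piece. Any depth-$1$ element of $\grt_1$ is a series of the form $f(\ad_x)(y)$, so in fixed weight $w\geq 2$ it is proportional to $\ad_x^{w-1}(y)$. The antisymmetry together with the hexagon equation then kill this element unless $w$ is odd and $\geq 3$, in which case it is nothing but the depth-$1$ part of some $\sigma_w$. Hence $\mathrm{gr}^1\grt_1$ has basis $\{[\sigma_{2k+1}]\}_{k\geq 1}$, and the image of the bracket map above is spanned by the classes $\{\sigma_{2i+1},\sigma_{2j+1}\}\bmod \mf^3\grt_1$.

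The main obstacle is to show that $\mathrm{gr}^2\grt_1$ is contained in this image. To this end, I would pass through the standard embedding $\grt_1\hookrightarrow \sder$, where a class in $\mf^2\grt_1/\mf^3\grt_1$ is encoded by its bi-degree $(1,1)$ symbol, i.e.\ a polynomial $P_\psi(s,t)$ recording the coefficients of the basic Lie words $[\ad_x^i(y),\ad_x^j(y)]$. Extracting the depth-$2$ leading term of the pentagon equation in $\frakt_4$, together with antisymmetry, produces a closed linear system on such polynomials whose solution space equals the bi-weight part of $\mathrm{gr}^2\grt_1$ in question. To finish, I would exhibit every solution $P$ as the symbol of an explicit linear combination of brackets $\{\sigma_{2i+1},\sigma_{2j+1}\}$, which amounts to inverting a concrete bilinear operation on odd polynomials in two variables. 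The combinatorics of this depth-$2$ linearized pentagon is the hard part, and it is essentially the input that will later be refined via restricted even period polynomials of cusp forms on $\mathrm{SL}_2(\mathbb{Z})$ by Schneps \cite{Schneps2006} and invoked in the proof of the paper's first main theorem.
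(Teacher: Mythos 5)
This statement is not proved in the paper at all: it is quoted verbatim from Ihara--Takao (\cite{Ihara2002}, Corollary to Theorem III-II-1) and used as an external input, so there is no internal proof to compare against. Judged on its own terms, your proposal correctly reformulates the problem --- reducing surjectivity of $\pi\circ i$ to surjectivity of the induced bracket $\mathrm{gr}^1\grt_1\otimes\mathrm{gr}^1\grt_1\to\mathrm{gr}^2\grt_1$, and the identification of $\mathrm{gr}^1\grt_1$ with the span of the classes $[\sigma_{2k+1}]$ is standard (one-dimensionality in each weight is forced by the form $c\cdot\ad_x^{w-1}(y)$ of a depth-one leading term, vanishing in even weight by antisymmetry and the hexagon, and realization in odd weight by the known existence of the $\sigma_{2k+1}$). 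But the reformulation is where your argument stops being a proof.

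The genuine gap is the final step: you write that you ``would exhibit every solution $P$ as the symbol of an explicit linear combination of brackets $\{\sigma_{2i+1},\sigma_{2j+1}\}$,'' but that assertion \emph{is} the theorem, and nothing in the proposal establishes it. Extracting the depth-two part of the pentagon and antisymmetry relations does give a linear system cutting out $\mathrm{gr}^2\grt_1$ inside the space of bidegree $(1,1)$ symbols, but a priori its solution space could be strictly larger than the span of the symbols of the brackets; ruling this out requires either the explicit analysis of the linearized system carried out by Ihara--Takao, or a dimension count (in weight $k$ the brackets span a space of dimension $\lfloor\frac{k-2}{6}\rfloor$, by the period-polynomial relations, and one must show the solution space of the linearized system has the same dimension --- this is where the restricted even period polynomials of cusp forms enter, cf.\ Remark \ref{rmk:relations} and \cite{Schneps2006}). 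Without one of these inputs the proposal only proves the easy containment, namely that the image of $\pi\circ i$ lands in $\mf^2\grt_1/\mf^3\grt_1$, not that it exhausts it.
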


\begin{remark}\label{rmk:relations}
A. Goncharov \cite{Goncharov2001}, and independently Y. Ihara and N. Takao \cite{Ihara2002} discovered that for $k\geq 4$ even, there are $\lfloor \frac{k-4}{4}\rfloor-\lfloor\frac{k-2}{6}\rfloor$ linear relations modulo $\mf^3\grt_1$ among the brackets $\{\sigma_{2i+1},\sigma_{k-1-2i} \}$, $1\leq i \leq \frac{k-4}{2}$. That is, relations of the form,
\begin{equation*}
\sum\limits_{i=1}^{\frac{k-4}{2}}a_i \{\sigma_{2i+1},\sigma_{k-1-2i}\}=0\mod \mf^3\grt_1.
\end{equation*}
In weight $k$ and depth 2 modulo higher depth, and taking symmetry into account, we find that there are $\lfloor \frac{k-4}{4} \rfloor$ different brackets. The result above now implies that the dimension of the vector space spanned by the brackets of weight $k$ and depth $2$ modulo elements of higher depth is $\lfloor \frac{k-2}{6}\rfloor$. Together with Theorem \ref{Thm:Ihara}, we obtain that the dimension of the quotient space $[\mf^2\grt_1/\mf^3\grt_1]_k$ of $\grt_1$-elements of weight $k$ and depth $2$ modulo higher depth is also of $\lfloor \frac{k-2}{6}\rfloor$.
\end{remark}

The linear relations mentioned above were later fully classified by L. Schneps in \cite{Schneps2006} and we recall her result below. Consider the ring $\mathbb{K}\langle x,y \rangle$ of polynomials in two non-commutative variables $x$ and $y$. Denote by $\mathbb{K}^d\langle x,y \rangle$ the vector space of $\mathbb{K}\langle x,y \rangle$ generated by monomials containing exactly $d$ $y$'s. There is a canonical $S_{d+1}$-action on monomials in $\mathbb{K}^d\langle x,y \rangle$ given by,
\begin{equation*}
\sigma.(x^{i_1}yx^{i_2}y\dots x^{i_d}yx^{i_{d+1}})=x^{i_{\sigma(1)}}yx^{i_{\sigma(2)}}y\cdots x^{i_{\sigma(d)}}yx^{i_{\sigma(d+1)}},
\end{equation*}
for $\sigma\in S_{d+1}$. With this notation, L. Schneps' theorem states the following.

\begin{theorem}[\cite{Schneps2006}, Theorem 4.1. and Corollary 4.2.]\label{thm:Schneps}
Let 
\begin{equation*}
F=\sum\limits_{i=1}^{\frac{k-4}{2}} a_i \ad_x^{2i}(y) \ad_x^{k-2-2i}(y)\in \mathbb{K}^2\langle x,y \rangle
\end{equation*}
where $\ad_x(y)=xy-yx$. Then $F$ satisfies 
\begin{equation*}
F+(13).F=0 \text{ and } F+(123).F+(132).F=0
\end{equation*}
if and only if
\begin{equation}\label{eq:relation}
\sum\limits_{i=1}^{\frac{k-4}{2}}a_i \{\sigma_{2i+1},\sigma_{k-1-2i}\}=0\mod \mf^3\grt_1.
\end{equation}
\end{theorem}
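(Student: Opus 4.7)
The plan is to reduce Schneps' theorem to a linearization of the depth filtration on $\grt_1$. Using the inclusion $\widehat{\lie_2}\hookrightarrow\mathbb{K}\langle\langle x,y\rangle\rangle$, I would associate to any $\psi\in\mf^2\grt_1$ its \emph{leading two-$y$ part} $\ell(\psi)\in\mathbb{K}^2\langle x,y\rangle$, defined as the homogeneous component involving exactly two $y$'s. Since the image of a nonzero Lie word of depth exactly $2$ remains nonzero in $\mathbb{K}^2\langle x,y\rangle$, one has $\ker\ell=\mf^3\grt_1$, and $\ell$ descends to an injection
\begin{equation*}
\ell\colon\mf^2\grt_1/\mf^3\grt_1\hookrightarrow \mathbb{K}^2\langle x,y\rangle.
\end{equation*}
A direct calculation using the normalization that $\sigma_{2k+1}$ equals $\ad_x^{2k}(y)$ modulo Lie words of depth $\geq 2$, together with the formulas for $D_\psi$ and $\{\cdot,\cdot\}$, expresses $\ell(\{\sigma_{2i+1},\sigma_{2j+1}\})$ explicitly as a linear combination of monomials $x^p y x^q y x^r$. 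This combination lies in the $\mathbb{K}$-span of $\{\ad_x^{2a}(y)\ad_x^{2b}(y)\mid a+b=(k-2)/2\}$, so the theorem reduces to characterizing the image of $\ell$ inside this subspace.

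For the ``only if'' direction, I would project each of the three defining equations of $\grt_1$ onto the two-$y$ component. The hexagon equation $\psi(x,y)+\psi(y,z)+\psi(z,x)=0$ in $\widehat{\lie_3}/(x+y+z)$ becomes, after eliminating $z=-x-y$ and extracting the coefficient of words with exactly two $y$'s, the three-term cyclic identity $F+(123).F+(132).F=0$; the $S_3$-action in Schneps' statement is precisely the permutation of the three blocks of $x$'s separated by the two $y$'s in each monomial. Combining this with the antisymmetry $\psi(x,y)=-\psi(y,x)$, which induces the involution $(13)$ on the two-$y$ part, yields $F+(13).F=0$. The pentagon equation projects to a consequence of the previous two identities and adds no further constraint modulo $\mf^3\grt_1$.

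The main obstacle is the converse. Given $F$ satisfying both symmetry relations, I would fix a Lie-polynomial lift $\psi_0$ of $F$ of depth exactly $2$ (for instance via the Dynkin projector applied to a canonical representative) and show that the defects of $\psi_0$ with respect to the three $\grt_1$-equations automatically lie in depth $\geq 3$: this is precisely what the two symmetry relations on $F$ encode. Successive corrections of $\psi_0$ by terms of depth $\geq 3$ do not alter $\ell(\psi_0)=F$, and convergence in the depth-adic topology then produces a genuine $\psi\in\mf^2\grt_1$ with $\ell(\psi)=F$. By Theorem~\ref{Thm:Ihara}, $\psi$ is congruent modulo $\mf^3\grt_1$ to a linear combination of brackets, and by injectivity of $\ell$ this combination is exactly $\sum a_i\{\sigma_{2i+1},\sigma_{k-1-2i}\}$, which establishes~\eqref{eq:relation}. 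The delicate step is verifying that each obstruction in the iterative correction vanishes; the dimension count in Remark~\ref{rmk:relations}, matched against the dimension of the solution space of the two polynomial relations, provides the numerical consistency that guides the whole argument.
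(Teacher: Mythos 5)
The paper does not prove this statement at all: it is imported verbatim from L.~Schneps (\cite{Schneps2006}, Theorem 4.1 and Corollary 4.2) and used as a black box, so there is no in-paper argument to compare yours against. Judged on its own, your proposal has a genuine gap at its core. You identify the polynomial $F=\sum_i a_i\,\ad_x^{2i}(y)\ad_x^{k-2-2i}(y)$ with the ``leading two-$y$ part'' $\ell\bigl(\sum_i a_i\{\sigma_{2i+1},\sigma_{k-1-2i}\}\bigr)$, but these are different elements of $\mathbb{K}^2\langle x,y\rangle$. The actual depth-two component of the bracket combination is the Lie expression
\begin{equation*}
\sum_i a_i\Bigl(\ad_x^{k-2-2i}\bigl([y,\ad_x^{2i}(y)]\bigr)-\ad_x^{2i}\bigl([y,\ad_x^{k-2-2i}(y)]\bigr)+[\ad_x^{2i}(y),\ad_x^{k-2-2i}(y)]\Bigr),
\end{equation*}
whereas $F$ is an associative product of two Lie words and is not even a Lie element. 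The entire content of Schneps' theorem is the translation between the vanishing of the former and the two $S_3$-symmetries of the latter; your ``only if'' direction, which projects the defining equations of $\grt_1$ onto the two-$y$ component, produces constraints on $\ell(\psi)$, not on $F$, and the bridge between the two is exactly the step that is missing.

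The converse direction is also logically inverted. If you succeeded in building $\psi\in\mf^2\grt_1$ with $\ell(\psi)=F\neq 0$, then $\psi\not\equiv 0 \mod \mf^3\grt_1$, and the combination of brackets to which Theorem~\ref{Thm:Ihara} says $\psi$ is congruent would be \emph{nonzero} in $\mf^2\grt_1/\mf^3\grt_1$ --- the opposite of equation~\eqref{eq:relation}, which asserts vanishing. The symmetric polynomials $F$ parametrize the \emph{relations} among the brackets (the kernel of the map from formal brackets to $\mf^2\grt_1/\mf^3\grt_1$), not the image of $\ell$; your argument conflates kernel and image. The dimension count of Remark~\ref{rmk:relations} is consistent with the theorem but cannot substitute for the explicit computation (carried out by Schneps via the $S_3$-action on the exponent blocks, as recalled in Remark~\ref{rmk:identification}) that matches each symmetric $F$ with the corresponding vanishing combination. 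If you want a self-contained proof, the honest route is to compute the depth-two part of $\{\sigma_{2i+1},\sigma_{2j+1}\}$ explicitly in the $(\alpha,\beta,\gamma)$-coordinates and relate its vanishing, via the linearized hexagon and pentagon equations, to the two symmetries of $G=\sum_i a_i(\alpha-\beta)^{2i}(\beta-\gamma)^{k-2-2i}$; that is a substantial calculation, not a formal reduction.
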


\begin{remark}\label{rmk:identification}
It will be useful to note the following easy technical subtlety. The polynomial expansion of $F$ as in the theorem above is given by,
\begin{equation*}
F=\sum\limits_{i=1}^{\frac{k-4}{2}} a_i \sum\limits_{u=0}^{2i}\sum\limits_{v=0}^{k-2-2i}(-1)^{u+v}{2i \choose u}{k-2-2i\choose v}x^u yx^{2i-u+v}yx^{k-2-2i-v}.
\end{equation*}
Since we are mainly interested in the $S_3$-action, the relevant information is contained in the exponents of $x$. These may be encoded in the expression,
\begin{equation*}
G=\sum\limits_{i=1}^{\frac{k-4}{2}} a_i \sum\limits_{u=0}^{2i}\sum\limits_{v=0}^{k-2-2i}(-1)^{u+v}{2i \choose u}{k-2-2i\choose v}\alpha^u \beta^{2i-u+v} \gamma^{k-2-2i-v}=\sum\limits_{i=1}^{\frac{k-4}{2}} a_i (\alpha-\beta)^{2i}(\beta-\gamma)^{k-2-2i}.
\end{equation*}
The $S_3$-action on $\mathbb{K}^2\langle x,y \rangle$ is compatible with the $S_3$-action on $\mathbb{K}[\alpha,\beta,\gamma]$ which for $\sigma\in S_3$ is given by,
\begin{equation*}
\sigma. (\alpha^{k_1}\beta^{k_2}\gamma^{k_3})=\alpha^{k_{\sigma(1)}}\beta^{k_{\sigma(2)}}\gamma^{k_{\sigma(3)}}
\end{equation*}
on monomials. Clearly, keeping this identification in mind, L. Schneps' result reads as follows.
\begin{equation*}
G=\sum\limits_{i=1}^{\frac{k-4}{2}} a_i (\alpha-\beta)^{2i}(\beta-\gamma)^{k-2-2i}
\end{equation*}
satisfies $G+(13).G=0$ and $G+(123).G+(132).G=0$ if and only if equation \eqref{eq:relation} holds.
\end{remark}

\section{Internally connected graphs}

\subsection{Definitions}
The graph complex of internally connected graphs was introduced by P. Severa and T. Willwacher  \cite{Severawillwacher2011} based on the works of M. Kontsevich (see for instance \cite{Kontsevich1999}, \cite{Lambrechts2014}). Fix $n\geq 1$.
\begin{definition}\label{def:icg}
An \emph{admissible graph} is an unoriented graph $\Gamma$ with labeled vertices $1,2,\dots, n$ (called external), possibly other vertices (unlabeled and called internal) satisfying the following properties:
\begin{enumerate}
\item{There is a linear order on the set of edges.}
\item{$\Gamma$ has no double edges, nor simple loops (edges connecting a vertex with itself).}
\item{Every internal vertex is at least trivalent.}
\item{Every internal vertex can be connected by a path with an external vertex.}
\end{enumerate}
\end{definition}
Let $\graphs(n)$ be the vector space spanned by finite linear combinations of admissible graphs with $n$ external vertices, modulo the relation $\Gamma^\sigma=(-1)^{|\sigma|} \Gamma$, where $\Gamma^{\sigma}$ differs from $\Gamma$ by a permutation $\sigma$ on the order of edges. Here $|\sigma|$ denotes the parity of the permutation $\sigma$.
\begin{definition}
A graph in $\graphs(n)$ which is connected after we delete all external vertices is called \emph{internally connected}. Denote by $\icg(n)$ the space spanned by internally connected graphs modulo sign relations obtained from the order of edges. Define the grading on $\icg(n)$ to be,
\begin{equation*}
\deg\Gamma=1-\#\text{edges}+2\#\text{internal vertices}.
\end{equation*}
Set the differential $d$ on $\icg(n)$ (on $\graphs(n)$, respectively) to be given by vertex splitting. More precisely, an external vertex splits into an external and an internal vertex connected by an edge, and we sum over all possible ways of reconnecting the ``loose'' edges to the two newly created vertices. Similarly, an internal vertex splits into two internal vertices, before summing over all ways of reconnecting the edges previously connected to the splitted vertex. In both cases, we only keep graphs that are still internally connected (admissible, respectively). As a convention, we set the newly created edge to come last in the new order of the edges. In this way, we have $d^2=0$.
\end{definition}

\begin{remark}
The collections $\{\graphs(n)\}_{n\geq 1}$ and $\{\icg(n)\}_{n \geq 1}$ both form non-symmetric operads in the category of cochain complexes. The operadic composition in $\graphs$ (and also in $\icg$) is given by insertion. That is, for $\Gamma_1\in \graphs(r)$, $\Gamma_2\in \graphs(s)$,
\begin{equation*}
\Gamma_1 \circ_j \Gamma_2\in \graphs(r+s-1)
\end{equation*}
is constructed by replacing the $j$th external vertex by $\Gamma_2$, summing over all possible ways of reconnecting the ``loose" edges (which were previously adjacent to vertex $j$) to vertices of $\Gamma_2$, and keeping only admissible graphs (in the case of $\icg$, we only keep the internally connected ones). The order on the set of edges of the new graphs is simply given by letting the edges of $\Gamma_1$ come before those of $\Gamma_2$ while leaving the respective ordering unchanged.
\end{remark}
\begin{remark}
Since any graph in $\graphs(n)$ may be written as the disjoint union of its internally connected components (after identifying the external vertices), the internally connected graphs freely generate $\graphs(n)$ as a coalgebra. For a suitable choice of grading on $\graphs(n)$, we therefore have an isomorphism of cocommutative coalgebras
\begin{equation*}
\graphs(n)\cong S(\icg(n)[1]).
\end{equation*}
By definition, the differential on $\graphs(n)$ defines an $L_\infty$-structure on the graded vector space $\icg(n)$.
\end{remark}

\begin{definition}
Let $\Gamma\in \icg(n)$. An \emph{internal loop} of $\Gamma$ is a loop in $\Gamma$ which does not pass through any external vertex.
\end{definition}
The space $\icg(n)$ is filtered by the number of internal loops. That is, we may define the following descending filtration on $\icg(n)$,
\begin{equation*}
\mf^p\icg(n):=\{\Gamma\in \icg(n)| \Gamma \text{ has at least } p \text{ internal loops}\}.
\end{equation*}
Consider the spectral sequence corresponding to this filtration. On its first page, we find the cohomology of the associated graded complex,
\begin{equation*}
E_1^{p,q}(n)=H^{p+q}(\mf^p\icg(n)/\mf^{p+1}\icg(n),d_0)
\end{equation*}
where $d_0$ is the part of the of the differential $d$ on $\icg(n)$ which does not create any internal loops (i.e. it splits internal vertices only).

\begin{remark}\label{rmk:page}
The first page of this spectral sequence turns out to be very useful. It is related to the works of A. Alekseev and C. Torossian on the Kashiwara-Vergne conjecture \cite{Alekseev2012}. For instance, 
\begin{equation*}
\bigoplus\limits_{q\in\mathbb{Z}} E_1^{0,q}(n)=E_1^{0,0}(n)=H^0(\icg(n)/\mf^1\icg(n),d_0)
\end{equation*}
consists of (internally) 3-valent trees modulo the IHX relation and forms a Lie algebra. It corresponds bijectively (as a Lie algebra) to the Lie algebra of special derivations $\sder_n$ (for an introduction, see \cite{Alekseev2012}). Moreover, 
\begin{equation*}
\bigoplus\limits_{q\in\mathbb{Z}} E_1^{1,q}(n)=E_1^{1,0}(n)=H^1(\mf^1\icg(n)/\mf^2\icg(n),d_0)
\end{equation*}
describes the space of one-loop graphs, again modulo IHX. These correspond to a quotient of the space of cyclic words $\tr_n$ in $n$ variables. These equivalences are made precise in P. \v Severa and T. Willwacher's paper \cite{Severawillwacher2011}. These two spaces together with the induced differential on the first page yield a definition of the Kashiwara-Vergne Lie algebra in terms of graphs (\cite{Alekseev2012},\cite{Severawillwacher2011}). Furthermore, using some additional simplicial structure on $\icg(n)$, we constructed a nested sequence of Lie subalgebras of $E_1^{0,0}(2)$ interpolating between the Kashiwara-Vergne Lie algebra and the Grothendieck-Teichm\"uller Lie algebra \cite{Felder2016}. In this text we describe an application of the two-loop part, that is,
\begin{equation*}
\bigoplus\limits_{q\in\mathbb{Z}} E_1^{2,q}(n)=\bigoplus\limits_{q\in\mathbb{Z}} H^{2+q}(\mf^2\icg(n)/\mf^3\icg(n),d_0).
\end{equation*}
\end{remark}

\begin{figure}[ht]
\centering
\begin{tikzpicture}
\draw[black,fill=black](-0.75,1.5) circle (0.075cm);
\draw[black,fill=black](0.75,1.5) circle (0.075cm);
\draw(-0.75,0) circle (0.1cm);
\draw(0.75,0) circle (0.1cm);

\draw(-0.75,0) -- (-0.75,1.5);
\draw(-0.75,0) -- (0.75,1.5);
\draw(0.75,0) -- (-0.75,1.5);
\draw(0.75,0) -- (0.75,1.5);
\draw(-0.75,1.5) -- (0.75,1.5);

\node at (-0.75,-0.5) {$1$};
\node at (0.75,-0.5) {$2$};


\draw[black,fill=black](2.75,1.75) circle (0.075cm);
\draw[black,fill=black](4.25,1.75) circle (0.075cm);
\draw[black,fill=black](3.5,2.3) circle (0.075cm);
\draw[black,fill=black](3.5,1.2) circle (0.075cm);
\draw(2.5,0) circle (0.1cm);
\draw(3.5,0) circle (0.1cm);
\draw(4.5,0) circle (0.1cm);

\draw(2.5,0) -- (2.75,1.75);
\draw(2.5,0) -- (3.5,2.3);
\draw(3.5,0) -- (3.5,1.2);
\draw(4.5,0) -- (4.25,1.75);
\draw(3.5,2.3) -- (2.75,1.75);
\draw(2.75,1.75) -- (3.5,1.2);
\draw(3.5,1.2) -- (4.25,1.75);
\draw(3.5,2.3) -- (4.25,1.75);

\node at (2.5,-0.5) {$1$};
\node at (3.5,-0.5) {$2$};
\node at (4.5,-0.5) {$3$};


\draw[black,fill=black](6.5,1.4) circle (0.075cm);
\draw[black,fill=black](8.5,1.75) circle (0.075cm);
\draw[black,fill=black](6.5,2.1) circle (0.075cm);
\draw[black,fill=black](7.5,2.5) circle (0.075cm);
\draw[black,fill=black](7.5,1) circle (0.075cm);
\draw(7.5,0) circle (0.1cm);

\draw(7.5,0) -- (8.5,1.75);
\draw(7.5,0) -- (7.5,2.5);
\draw(7.5,0) -- (6.5,1.4);
\draw(7.5,0) -- (6.5,2.1);
\draw(6.5,1.4) -- (6.5,2.1);
\draw(6.5,2.1) -- (7.5,2.5);
\draw(7.5,2.5) -- (8.5,1.75);
\draw(8.5,1.75) -- (7.5,1);
\draw(6.5,1.4) -- (7.5,1);

\node at (7.5,-0.5) {$1$};

\end{tikzpicture}
\caption{A tree in $\icg(2)$, a one-loop graph in $\icg(3)$ and a two-loop graph in $\icg(1)$.}\label{figure:icgs}
\end{figure}
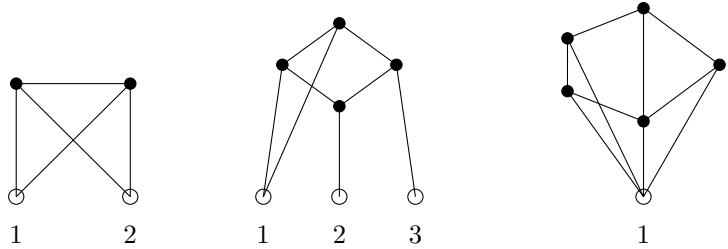

\subsection{Cohomology of the two-loop part}

In \cite{AroneTurchin2015}, G. Arone and V. Turchin defined a graph complex $(\mathcal{E}_{\pi}^{m,N},\partial)$ ($m,N\in \mathbb{N}$) which provides some insight on the rational homotopy type of the spaces of long embeddings, i.e. a certain kind of embeddings $\mathbb{R}^m\hookrightarrow \mathbb{R}^N$. While the cohomology of the tree- and one-loop part of this complex was computed in \cite{AroneTurchin2015}, the cohomology of the two-loop part was established in \cite{Turchin2014}. In this section, we recall the results on the two-loop part from which we will later deduce our main theorem.

\begin{remark}
It is clear from the definition given in \cite{Turchin2014} that, for $m=0$, $N=2$, the graph complex $(\mathcal{E}_\pi^{0,2},\partial)$ corresponds (up to some degree issues) to the complex $(\icg(1),d_0)$. The only difference between the two definitions is that while in $\mathcal{E}_\pi^{0,2}$ the degree of a graph is given by $\# \text{edges}-2\# \text{internal vertices}$, in $\icg(1)$, it will be $1-\# \text{edges}+2\# \text{internal vertices}$. Note however, that the way one draws graphs in the respective complexes is different, but the correspondence is obvious (see Figure \ref{figure:comparison}).

\begin{figure}[ht]
\centering
\begin{tikzpicture}

\node at (-3.5,0) {$1$};

\draw (0,0) circle (1.5cm);
\draw(-1.5,0) -- (1.5,0);

\draw[black,fill=black](0.79,1.28) circle (0.075cm);
\draw[black,fill=black](-0.79,1.28) circle (0.075cm);

\draw[black,fill=black](1.21,-0.88) circle (0.075cm);
\draw[black,fill=black](-1.21,-0.88) circle (0.075cm);
\draw[black,fill=black](0.46,-1.43) circle (0.075cm);
\draw[black,fill=black](-0.46,-1.43) circle (0.075cm);

\draw[black,fill=black](-1.5,0) circle (0.075cm);
\draw[black,fill=black](1.5,0) circle (0.075cm);

\draw(-3,0) circle (0.1cm);

\draw(-3,0) -- (-1.5,0);

\draw(0.79,1.28) -- (-3,0);
\draw(-0.79,1.28) -- (-3,0);

\draw(1.21,-0.88) -- (-3,0);
\draw(-1.21,-0.88) -- (-3,0);
\draw(0.46,-1.43) -- (-3,0);
\draw(-3,0) -- (-0.46,-1.43);

\node at (3,0) {$\longleftrightarrow$};

\draw (6,0) circle (1.5cm);
\draw(4.5,0) -- (7.5,0);

\draw[black,fill=black](6.79,1.28) circle (0.075cm);
\draw[black,fill=black](5.21,1.28) circle (0.075cm);

\draw[black,fill=black](7.21,-0.88) circle (0.075cm);
\draw[black,fill=black](4.79,-0.88) circle (0.075cm);
\draw[black,fill=black](6.46,-1.43) circle (0.075cm);
\draw[black,fill=black](5.54,-1.43) circle (0.075cm);

\draw[black,fill=black](4.5,0) circle (0.075cm);
\draw[black,fill=black](7.5,0) circle (0.075cm);

\draw(4.5,0) -- (4,0);

\draw(6.79,1.28) -- (7,1.73);
\draw(5.21,1.28) -- (5,1.73);

\draw(7.21,-0.88) -- (7.62,-1.18);
\draw(4.79,-0.88) -- (4.38,-1.18);
\draw(6.46,-1.43) -- (6.62,-1.9);
\draw(5.38,-1.9) -- (5.54,-1.43);

\end{tikzpicture}
\caption{The correspondence between graphs in $\icg(1)$ and $\mathcal{E}_\pi^{0,2}$.}\label{figure:comparison}
\end{figure}
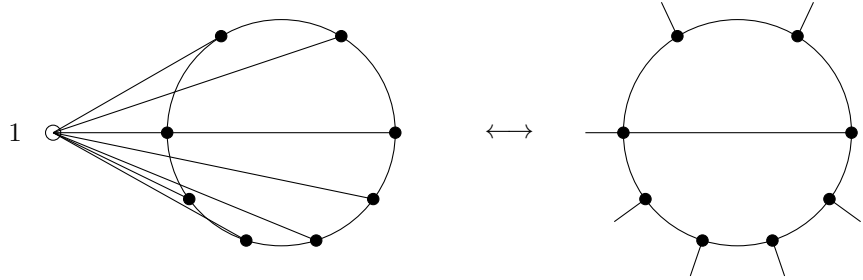\end{remark}

\begin{remark}
In \cite{Turchin2014} it is shown that the cohomology of the two-loop part of the complex $(\icg(1),d_0)\simeq(\mathcal{E}_\pi^{0,2},\partial)$ can be calculated by the quasi-isomorphic subcomplex $(C,d_0)$ given by
\begin{equation*}
0\rightarrow C_0 \rightarrow C_1\rightarrow C_2\rightarrow 0
\end{equation*}
where the graphs in $C_i$ are as depicted in Figure \ref{figure:Ccomplex}. In our notation, the cohomology of this complex is $E_1^{2,\bullet}(1)=H^{2+\bullet}(\mf^2\icg(1)/\mf^3\icg(1),d_0)$.
\end{remark}

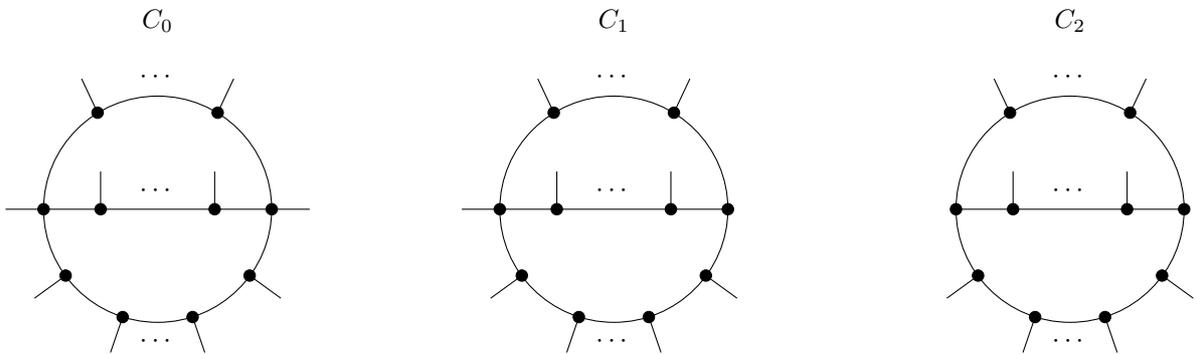
\begin{figure}[ht]
\centering
\begin{tikzpicture}

\node at (0,2.5) {$C_0$};
\node at (6,2.5) {$C_1$};
\node at (12,2.5) {$C_2$};

\node at (0,1.75) {$\cdots$};
\node at (0,0.25) {$\cdots$};
\node at (0,-1.75) {$\cdots$};

\draw (0,0) circle (1.5cm);
\draw(-1.5,0) -- (1.5,0);

\draw[black,fill=black](0.79,1.28) circle (0.075cm);
\draw[black,fill=black](-0.79,1.28) circle (0.075cm);

\draw[black,fill=black](0.75,0) circle (0.075cm);
\draw[black,fill=black](-0.75,0) circle (0.075cm);

\draw[black,fill=black](1.21,-0.88) circle (0.075cm);
\draw[black,fill=black](-1.21,-0.88) circle (0.075cm);
\draw[black,fill=black](0.46,-1.43) circle (0.075cm);
\draw[black,fill=black](-0.46,-1.43) circle (0.075cm);

\draw[black,fill=black](-1.5,0) circle (0.075cm);
\draw[black,fill=black](1.5,0) circle (0.075cm);

\draw(1.5,0) -- (2,0);
\draw(-2,0) -- (-1.5,0);

\draw(0.75,0) -- (0.75,0.5);
\draw(-0.75,0) -- (-0.75,0.5);

\draw(0.79,1.28) -- (1,1.73);
\draw(-0.79,1.28) -- (-1,1.73);

\draw(1.21,-0.88) -- (1.62,-1.18);
\draw(-1.21,-0.88) -- (-1.62,-1.18);
\draw(0.46,-1.43) -- (0.62,-1.9);
\draw(-0.62,-1.9) -- (-0.46,-1.43);


\node at (0+6,1.75) {$\cdots$};
\node at (6,0.25) {$\cdots$};
\node at (6,-1.75) {$\cdots$};

\draw (6,0) circle (1.5cm);
\draw(4.5,0) -- (7.5,0);

\draw[black,fill=black](6.79,1.28) circle (0.075cm);
\draw[black,fill=black](5.21,1.28) circle (0.075cm);

\draw[black,fill=black](6.75,0) circle (0.075cm);
\draw[black,fill=black](-0.75+6,0) circle (0.075cm);

\draw[black,fill=black](7.21,-0.88) circle (0.075cm);
\draw[black,fill=black](4.79,-0.88) circle (0.075cm);
\draw[black,fill=black](6.46,-1.43) circle (0.075cm);
\draw[black,fill=black](5.54,-1.43) circle (0.075cm);

\draw[black,fill=black](4.5,0) circle (0.075cm);
\draw[black,fill=black](7.5,0) circle (0.075cm);

\draw(4.5,0) -- (4,0);

\draw(6.79,1.28) -- (7,1.73);
\draw(5.21,1.28) -- (5,1.73);

\draw(0.75+6,0) -- (0.75+6,0.5);
\draw(-0.75+6,0) -- (-0.75+6,0.5);

\draw(7.21,-0.88) -- (7.62,-1.18);
\draw(4.79,-0.88) -- (4.38,-1.18);
\draw(6.46,-1.43) -- (6.62,-1.9);
\draw(5.38,-1.9) -- (5.54,-1.43);


\node at (0+12,1.75) {$\cdots$};
\node at (12,0.25) {$\cdots$};
\node at (12,-1.75) {$\cdots$};

\draw (12,0) circle (1.5cm);
\draw(4.5+6,0) -- (7.5+6,0);

\draw[black,fill=black](6.79+6,1.28) circle (0.075cm);
\draw[black,fill=black](5.21+6,1.28) circle (0.075cm);

\draw[black,fill=black](0.75+12,0) circle (0.075cm);
\draw[black,fill=black](-0.75+12,0) circle (0.075cm);

\draw[black,fill=black](7.21+6,-0.88) circle (0.075cm);
\draw[black,fill=black](4.79+6,-0.88) circle (0.075cm);
\draw[black,fill=black](6.46+6,-1.43) circle (0.075cm);
\draw[black,fill=black](5.54+6,-1.43) circle (0.075cm);

\draw[black,fill=black](4.5+6,0) circle (0.075cm);
\draw[black,fill=black](7.5+6,0) circle (0.075cm);

\draw(6.79+6,1.28) -- (7+6,1.73);
\draw(5.21+6,1.28) -- (11,1.73);

\draw(0.75+12,0) -- (0.75+12,0.5);
\draw(-0.75+12,0) -- (-0.75+12,0.5);

\draw(7.21+6,-0.88) -- (7.62+6,-1.18);
\draw(4.79+6,-0.88) -- (4.38+6,-1.18);
\draw(6.46+6,-1.43) -- (6.62+6,-1.9);
\draw(5.38+6,-1.9) -- (5.54+6,-1.43);

\end{tikzpicture}
\caption{Graphs in $C_0$, $C_1$, $C_2$.}\label{figure:Ccomplex}
\end{figure}

\begin{definition}
On a two-loop graph as in Figure \ref{figure:comparison} on the right, we will refer to the edges making up the $\theta$-shape as the three \emph{main edges}, and the short edges attached to the main edges (as well as the one attached to the vertices where the three main edges come together) as \emph{hair}. These correspond to the edges adjacent to the unique external vertex in $\icg(1)$ under the correspondence of $\mathcal{E}_\pi^{0,2}$ and $\icg(1)$.
\end{definition}

\begin{remark}
Following \cite{Turchin2014}, we encode such $\theta$-graphs by certain polynomials. Let the graph in $C_0$ with $k_1$ hair on the upper strand, $k_2$ on the middle one and $k_3$ on the lower one, and edges ordered as in Figure \ref{figure:ordering} correspond to $x^{k_1}y^{k_2}z^{k_3}$.

\begin{figure}[ht]
\centering
\begin{tikzpicture}


\node at (-1.75,0.25) {$0$};

\node at (-1.51,0.875) {$1$};
\node at (-1.125,1.95) {$2$};
\node at (0,1.75) {$3$};
\node at (1.125,1.95) {$4$};
\node at (1.51,0.875) {$5$};

\node at (-1.125,0.25) {$6$};
\node at (-0.75,0.75) {$7$};
\node at (-0.375,0.25) {$8$};
\node at (0,0.75) {$9$};
\node at (0.375,0.25) {$10$};
\node at (0.75,0.75) {$11$};
\node at (1.125,0.25) {$12$};

\node at (-1.66,-0.54) {$13$};
\node at (-1.82,-1.32) {$14$};
\node at (-1.03,-1.42) {$15$};
\node at (-0.7,-2.14) {$16$};
\node at (0,-1.75) {$17$};
\node at (0.7,-2.14) {$18$};
\node at (1.03,-1.42) {$19$};
\node at (1.82,-1.32) {$20$};
\node at (1.66,-0.54) {$21$};

\node at (1.75,0.25) {$22$};

\draw (0,0) circle (1.5cm);
\draw(-1.5,0) -- (1.5,0);

\draw[black,fill=black](0.79,1.28) circle (0.075cm);
\draw[black,fill=black](-0.79,1.28) circle (0.075cm);

\draw[black,fill=black](0.75,0) circle (0.075cm);
\draw[black,fill=black](-0.75,0) circle (0.075cm);
\draw[black,fill=black](0,0) circle (0.075cm);

\draw[black,fill=black](1.21,-0.88) circle (0.075cm);
\draw[black,fill=black](-1.21,-0.88) circle (0.075cm);
\draw[black,fill=black](0.46,-1.43) circle (0.075cm);
\draw[black,fill=black](-0.46,-1.43) circle (0.075cm);

\draw[black,fill=black](-1.5,0) circle (0.075cm);
\draw[black,fill=black](1.5,0) circle (0.075cm);

\draw(1.5,0) -- (2,0);
\draw(-2,0) -- (-1.5,0);

\draw(0.75,0) -- (0.75,0.5);
\draw(-0.75,0) -- (-0.75,0.5);
\draw(0,0) -- (0,0.5);

\draw(0.79,1.28) -- (1,1.73);
\draw(-0.79,1.28) -- (-1,1.73);

\draw(1.21,-0.88) -- (1.62,-1.18);
\draw(-1.21,-0.88) -- (-1.62,-1.18);
\draw(0.46,-1.43) -- (0.62,-1.9);
\draw(-0.62,-1.9) -- (-0.46,-1.43);
\end{tikzpicture}
\caption{A $\theta$-graph with this ordering on the set of edges corresponds to the polynomial $x^2y^3z^4$.}\label{figure:ordering}
\end{figure}
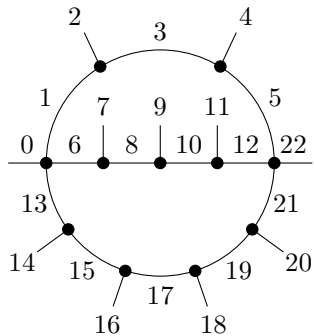

By abuse of notation, we identify $C_1$ and $C_2$ with the same type of polynomials. The different symmetry relations on graphs imply the following sign relations on our spaces of polynomials.
\end{remark}

\begin{lemma}[\cite{Turchin2014}, Lemma 4.2.]\label{Lemma:signs}
\begin{enumerate}
\item{Let $x^{k_1}y^{k_2}z^{k_3}\in C_0$. The symmetry with respect to the vertical line produces the sign
\begin{equation*}
(-1)^{k_1+k_2+k_3+1}.
\end{equation*}
Therefore, graphs in $C_0$ with an even number of hair on the main edges are zero.
}
\item{Let $x^{k_1}y^{k_2}z^{k_3}\in C_2$. The symmetry with respect to the vertical line produces the sign
\begin{equation*}
(-1)^{k_1+k_2+k_3}.
\end{equation*}
Therefore, graphs in $C_2$ with an odd number of hair are zero.
}
\item{Let $x^{k_1}y^{k_2}z^{k_3}\in C_i$, $i=0,1,2$. The $S_3$-action which permutes the three main edges acts by sign, i.e. for $\sigma\in S_3$,
\begin{equation*}
x^{k_1}y^{k_2}z^{k_3} \text{ is identified with } (-1)^{|\sigma|} x^{k_{\sigma(1)}}y^{k_{\sigma(2)}}z^{k_{\sigma(3)}}
\end{equation*}
where $|\sigma|$ denotes the parity of the permutation $\sigma$.
}
\end{enumerate}
\end{lemma}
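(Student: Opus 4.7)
The plan is to reduce each assertion to a direct computation of the sign of a permutation on the set of edges, using the defining relation $\Gamma^\sigma = (-1)^{|\sigma|}\Gamma$. With the ordering convention fixed by Figure~\ref{figure:ordering} (corner hair(s) first, then the alternating segment/hair edges along strand~$1$, strand~$2$, strand~$3$), both the vertical reflection and the $S_3$-action on strands induce explicit block permutations whose parities can be read off.

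For parts (1) and (2), I would realize the vertical reflection as an honest automorphism of a graph in $C_0$ or $C_2$; in $C_1$ the two corners carry different numbers of external hairs, so no such symmetry exists, which is why (1) and (2) omit this case. The reflection acts in two ways: on each main strand it reverses the order of the $2k_i+1$ edges (alternating $k_i+1$ segments and $k_i$ hairs), and in $C_0$ it additionally interchanges the two corner hairs. The reversal of a sequence of $2k_i+1$ elements contributes a sign $(-1)^{\binom{2k_i+1}{2}}=(-1)^{k_i(2k_i+1)}=(-1)^{k_i}$, so the three strand reversals together give $(-1)^{k_1+k_2+k_3}$. In $C_0$ the swap of the two corner hairs is one additional transposition, producing the factor $(-1)$ and hence the total sign $(-1)^{k_1+k_2+k_3+1}$; in $C_2$ there are no corner hairs and the total sign is $(-1)^{k_1+k_2+k_3}$. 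The relation $\Gamma^\sigma=(-1)^{|\sigma|}\Gamma$ then forces the graph to vanish precisely when this sign equals $-1$, yielding the stated parity constraints.

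For part (3), since $S_3$ is generated by transpositions and the sign character is multiplicative, it suffices to handle $\sigma=(ij)$; the three cases are symmetric, so I would carry out $\sigma=(12)$. Applying $(12)$ yields the same abstract graph drawn with strands~$1$ and $2$ interchanged, leaving the corner hairs (if any) fixed. Comparing the inherited edge order with the canonical order of Figure~\ref{figure:ordering} amounts to a block shuffle that moves the $2k_2+1$ edges of the new strand~$1$ past the $2k_1+1$ edges of the new strand~$2$. This shuffle has sign $(-1)^{(2k_1+1)(2k_2+1)}=-1$ since both factors are odd, so $\Gamma^{(12)}=-\Gamma$. By multiplicativity we obtain $x^{k_1}y^{k_2}z^{k_3}=(-1)^{|\sigma|}x^{k_{\sigma(1)}}y^{k_{\sigma(2)}}z^{k_{\sigma(3)}}$ for all $\sigma\in S_3$.

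The argument involves no deep obstacle: the lemma is essentially a careful bookkeeping of edge orderings. The main subtlety is to ensure consistency of the chosen edge-ordering convention in each of $C_0$, $C_1$, $C_2$, and to verify that each claimed symmetry genuinely is an automorphism of the underlying unordered graph, so that the induced edge permutation is well-defined and the relation $\Gamma^\sigma=(-1)^{|\sigma|}\Gamma$ applies. Once the conventions of Figure~\ref{figure:ordering} are in place, the parities drop out of the two block-permutation computations described above.
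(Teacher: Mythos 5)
Your argument is correct: the parity computations (sign $(-1)^{k_i}$ for reversing the $2k_i+1$ edges of a strand, an extra transposition for the two corner hairs in $C_0$, and sign $(-1)^{(2k_i+1)(2k_j+1)}=-1$ for an adjacent block swap of two strands, extended to all of $S_3$ by multiplicativity) are exactly the ``exercise in graphical calculus'' that the paper does not carry out but instead defers to \cite{Turchin2014}. You also correctly identify why $C_1$ is excluded from parts (1) and (2), namely that the reflection is not an automorphism when the two corners carry different numbers of hairs, so nothing further is needed.
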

\begin{proof}
The proof is an exercise in graphical calculus. It can be found in \cite{Turchin2014}.
\end{proof}
\begin{definition}
Let $\mathbb{K}[x,y,z]$ be the algebra of polynomials in the variables $x,y,z$. We say that a polynomial $p\in \mathbb{K}[x,y,z]$ is even (odd) if all of its monomials are
of even (odd) degree. We denote by  $\mathbb{K}[x,y,z]^{\text{even}}$ ($\mathbb{K}[x,y,z]^{\text{odd}}$) the space of even (odd) polynomials. Given a polynomial $p\in \poly$, we denote by $[p]_\text{even}$ ($[p]_\text{odd}$) its even (odd) part.
\end{definition}

\begin{remark}
Let the action of $S_3$ on $\poly$ be given on monomials by 
\begin{equation*}
\sigma . (x^{k_1}y^{k_2}z^{k_3}):=(-1)^{|\sigma|} x^{k_{\sigma(1)}}y^{k_{\sigma(2)}}z^{k_{\sigma(3)}}.
\end{equation*}
Using Lemma \ref{Lemma:signs} we can identify the components of the graph complex $(C,d_0)$ with the following spaces of coinvariants with respect to this action of the symmetric group.
\begin{align*}
C_0=&(\polyodd)_{S_3}\\
C_1=&\poly_{S_3}\\
C_2=&(\polyeven_{>0})_{S_3}
\end{align*}
where $\polyeven_{>0}$ denotes the space of even polynomials in which all monomials have strictly positive degree.
\end{remark}

\begin{lemma}[\cite{Turchin2014}, Lemma 5.6.]
The action of the differential $d_0$ translates under the above identifications to
\begin{align*}
d_0:C_0&\longrightarrow C_1\\
p(x,y,z)&\longmapsto 2(x+y+z)p(x,y,z)\\
d_0:C_1&\longrightarrow C_2\\
p(x,y,z)&\longmapsto \frac{1}{2}(x+y+z)(p(x,y,z)-p(-x,-y,-z))=[(x+y+z)p(x,y,z)]_{\text{even}}
\end{align*}
\end{lemma}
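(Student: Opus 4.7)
The plan is to translate the vertex-splitting differential $d_0$ into the stated polynomial operations by identifying the splittable internal vertices of the $\theta$-style graphs in $C_0$, $C_1$, $C_2$, tracking the effect of a split on the attached hair, and applying the sign and parity identifications from Lemma \ref{Lemma:signs}.

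First I would determine which internal vertices can be split at all. Since splitting an $n$-valent vertex produces two new vertices whose valences sum to $n+2$, both of which must be at least trivalent, only vertices of valence $\geq 4$ can be split without violating the admissibility condition of Definition \ref{def:icg}. In the $\theta$-graphs spanning the $C_i$, the only such vertices are the tripod vertices (where the three main edges meet) which happen to carry an attached hair. Consequently, a graph in $C_0$ has two splittable tripods, a graph in $C_1$ has exactly one, and a graph in $C_2$ has none, consistent with $C_2$ being the last nontrivial term.

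Next I would analyze the three possible splittings of a single haired tripod whose incident edges are the strand edges $s_1, s_2, s_3$ and the hair $h$. Up to the symmetry of the new edge, the three possibilities are $\{s_i,h\}$ against $\{s_j,s_k\}$ for $i=1,2,3$. The $\{s_j,s_k\}$-side becomes a new bare trivalent tripod, while the $\{s_i,h\}$-side becomes a trivalent vertex on strand $s_i$ carrying the hair $h$; the two new vertices are joined by the newly created edge. In the polynomial encoding this is exactly the operation that pushes the hair off the tripod onto strand $s_i$, i.e.\ raises the exponent of the $i$-th variable by one. Summing over the three options gives a contribution of $(x+y+z)\,p(x,y,z)$ per splittable tripod. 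In $C_0$ the two haired tripods are exchanged by the vertical reflection, which is a graph isomorphism and thus identifies their contributions in $C_1$, so the two tripods together give $2(x+y+z)p$. In $C_1$ there is only one splittable tripod, so the raw contribution is $(x+y+z)p$.

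Finally, one must account for the parity constraints from Lemma \ref{Lemma:signs} and verify sign coherence. Since $C_0$ consists of odd polynomials, multiplying by $(x+y+z)$ lands automatically inside $C_1 = \poly_{S_3}$ with no projection required, giving $d_0(p) = 2(x+y+z)p$. By contrast $C_2$ consists of even polynomials, so the image of $d_0: C_1 \to C_2$ must be projected onto the even part, and since $x+y+z$ is itself odd,
\begin{equation*}
\bigl[(x+y+z)\,p(x,y,z)\bigr]_{\text{even}} = \tfrac{1}{2}(x+y+z)\bigl(p(x,y,z) - p(-x,-y,-z)\bigr).
\end{equation*}
The main obstacle is the sign bookkeeping: with the convention that the newly created edge is placed last in the edge ordering, one needs to check that the three splittings $\{s_i,h\}$ against $\{s_j,s_k\}$ all contribute with the same sign, and that these signs are compatible with the $S_3$-sign action on monomials and the vertical-reflection sign recorded in Lemma \ref{Lemma:signs}. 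This is a direct but slightly tedious graphical calculation, carried out in \cite{Turchin2014}, Lemma 5.6, which can be invoked.
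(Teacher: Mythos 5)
Your proposal is correct and follows essentially the same route as the paper's (much terser) proof: identify the 4-valent haired tripods as the only splittable vertices, observe that each of the three splittings pushes the hair onto one of the three main strands (hence multiplication by $x+y+z$, twice for the two tripods of $C_0$), and use the parity relation of Lemma \ref{Lemma:signs} to explain the projection onto the even part for $d_0:C_1\to C_2$. The residual sign bookkeeping is deferred to \cite{Turchin2014}, exactly as the paper does.
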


\begin{proof}
The differential $d_0:C_0\rightarrow C_1$ splits each of the two 4-valent vertices in three ways, and every time it adds an additional edge to one of the three main strands. In the space of polynomials, this corresponds to multiplying with $x+y+z$. The same operations is performed by $d_0:C_1\rightarrow C_2$ on the unique 4-valent vertex. In this case, by symmetry, all graphs with an odd number of hair will equal zero.
\end{proof}

\begin{lemma}
The cohomology of $(C,d_0)$ is
\begin{align*}
E_1^{2,-2}(1)=&H^0(C,d_0)=0\\
E_1^{2,-1}(1)=&H^1(C,d_0)=\left(\cfrac{\polyeven}{(x+y+z)\cap \polyeven}\right)_{S_3}\\
E_1^{2,0}(1)=&H^2(C,d_0)=\left(\cfrac{\polyeven_{>0}}{\im(d_0:C_1\rightarrow C_2)}\right)_{S_3}
\end{align*}
where $(x+y+z)$ denotes the ideal in $\mathbb{K}[x,y,z]$ generated by $x+y+z$.
\end{lemma}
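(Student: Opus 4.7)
The plan is to exploit the identification from the preceding remark, reducing the three spaces $C_0,C_1,C_2$ to appropriate spaces of antisymmetric polynomials. Since we work in characteristic zero, the coinvariants $(\cdot)_{S_3}$ with respect to the signed action coincide with the antisymmetric part $(\cdot)^{\sgn}$. So I would rewrite the complex as
\begin{equation*}
0\to(\polyodd)^{\sgn}\xrightarrow{\,2(x+y+z)\cdot\,}\poly^{\sgn}\xrightarrow{\,[(x+y+z)\cdot]_{\text{even}}\,}(\polyeven_{>0})^{\sgn}\to 0,
\end{equation*}
and exploit the fact that $\poly$ is an integral domain and that $x+y+z$ is $S_3$-symmetric (without sign).

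For $H^0$, the differential is multiplication by the nonzero element $2(x+y+z)$ in the integral domain $\poly$, hence injective, so $H^0=0$.

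For $H^1$, I would proceed in two steps. To compute $\ker(d_0\colon C_1\to C_2)$, decompose $p=p_{\text{even}}+p_{\text{odd}}\in\poly^{\sgn}$; since $x+y+z$ is of odd total degree, $[(x+y+z)p]_{\text{even}}=(x+y+z)p_{\text{odd}}$, which vanishes if and only if $p_{\text{odd}}=0$ (again because $\poly$ is a domain). Hence $\ker(d_0\colon C_1\to C_2)=(\polyeven)^{\sgn}$. To identify $\im(d_0\colon C_0\to C_1)$, I would show it equals $((x+y+z)\cap\polyeven)^{\sgn}$. The inclusion $(x+y+z)(\polyodd)^{\sgn}\subset((x+y+z)\cap\polyeven)^{\sgn}$ is immediate. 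Conversely, given an antisymmetric even $r=(x+y+z)s$, parity forces $s\in\polyodd$; applying $\sigma\in S_3$ yields $(-1)^{|\sigma|}(x+y+z)s=(x+y+z)\sigma(s)$, and cancellation in the integral domain $\poly$ gives $\sigma(s)=(-1)^{|\sigma|}s$, so $s\in(\polyodd)^{\sgn}$. Combining both computations produces the desired formula for $H^1$.

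For $H^2$, there is nothing to prove beyond unraveling the definition: $H^2=C_2/\im(d_0\colon C_1\to C_2)$ reads, after the identification, exactly as the statement. The only slightly delicate step is the cancellation argument used to show that antisymmetry of $(x+y+z)s$ forces antisymmetry of $s$; this is where the integral domain property of $\poly$ does the essential work, and it is really the heart of the computation.
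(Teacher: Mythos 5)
Your proposal is correct and follows essentially the same route as the paper: identify the differentials with multiplication by $x+y+z$, use that $\poly$ is an integral domain to get injectivity on $C_0$ and the kernel $\polyeven$ on $C_1$, and identify the image with $(x+y+z)\cap\polyeven$. The only cosmetic difference is that you work with the antisymmetric (invariant) parts throughout rather than passing to coinvariants at the end — equivalent in characteristic zero — and you supply the cancellation argument for the $S_3$-compatibility of the image that the paper's proof leaves implicit.
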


\begin{proof}
The differential $d_0:C_0\rightarrow C_1$ is injective. Thus $H^0(C,d_0)=0$. Next, let $p\in C_1$ be such that $d_0p=\frac{1}{2}[(x+y+z)p]_{\text{even}}=0$. This is equivalent to $[p]_{\text{odd}}=0$, and therefore $p\in \polyeven$. The image of $d_0:C_0\rightarrow C_1$ is the ideal generated by $(x+y+z)$ which by symmetry we need to intersect with the algebra of even polynomials, and we obtain the result for $H^1(C,d_0)$. Since $d_0:C_2\rightarrow 0$, the formula for $H^2(C,d_0)$ follows by definition.
\end{proof}

\begin{definition}
Let $\gamma\in \icg(1)$. The number of edges adjacent to the unique external vertex in $\icg(1)$ is called the \emph{weight} of $\gamma$. For $i=1,2$, denote by $[H^i(C,d_0)]_k=[E_1^{2,1-i}(1)]_k$ the space of graphs of weight $k$ in $H^i(C,d_0)=E_1^{2,1-i}(1)$.
\end{definition}

\begin{remark}
In \cite{Turchin2014}, the authors work with invariants rather than with co-invariants to describe the cohomology. In particular, they give a generating set for $H^1(C,d_0)$ and $H^2(C,d_0)$ in terms of anti-symmetric polynomials. This enables them to compute the dimensions of the homogeneous weight components of these spaces (see Theorem \ref{Thm:dimensions} below). For our purposes, working with co-invariants seems to be more suitable. In fact, the generating set for $H^1(C,d_0)$ that we determine in Section \ref{section:basis} contains fewer elements than the one given in \cite{Turchin2014}.
\end{remark}

\begin{theorem}[\cite{Turchin2014}, Theorem 6.2.]\label{Thm:dimensions}
The dimensions of the aforementioned spaces are given by
\begin{align*}
\dim([H^1(C,d_0)]_k)=\dim([E_1^{2,-1}(1)]_k)&=  \begin{cases}
    0, & \text{for } k \text{ even} \\
   \lfloor\frac{k}{6}\rfloor, & \text{for } k \text{ odd}
  \end{cases}\\
\dim([H^2(C,d_0)]_k)=\dim([E_1^{2,0}(1)]_k)&=  \begin{cases}
   \lfloor\frac{k}{6}\rfloor, & \text{for } k \text{ even} \\
    0, & \text{for } k \text{ odd.}
  \end{cases}
\end{align*}
\end{theorem}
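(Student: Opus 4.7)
The plan is to describe both cohomology groups explicitly as spaces of anti-symmetric polynomials and then to extract the dimensions via a Hilbert series argument. Since the base field has characteristic zero, taking $S_3$-coinvariants with respect to the sign action is canonically isomorphic to taking $S_3$-invariants for the same action (via the antisymmetrizer $\tfrac{1}{6}\sum_{\sigma\in S_3}(-1)^{|\sigma|}\sigma$); these $S_3$-invariants are precisely the anti-symmetric polynomials in the usual sense. Denoting the anti-symmetric part by a superscript $-$, the preceding lemma then rewrites the cohomology as
\begin{align*}
H^1(C,d_0) &\cong \left(\polyeven / ((x+y+z)\cdot\polyodd)\right)^{-}, \\
H^2(C,d_0) &\cong \left(\polyeven_{>0} / ((x+y+z)\cdot\polyodd)\right)^{-},
\end{align*}
after noting that both $(x+y+z)\cap\polyeven$ and $\im(d_0\colon C_1\to C_2)$ coincide with $(x+y+z)\cdot\polyodd\subset\polyeven$.

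Next, the key algebraic input is classical: the anti-symmetric polynomials in $\poly$ form a free rank-one module over the symmetric polynomial ring $\mathbb{K}[e_1,e_2,e_3]$, generated by the Vandermonde $V=(x-y)(y-z)(z-x)$ of degree $3$. Modulo the ideal generated by $e_1=x+y+z$ this module becomes $V\cdot\mathbb{K}[e_2,e_3]$. The element $V\cdot e_2^a e_3^b$ has total degree $3+2a+3b$, which is even precisely when $b$ is odd. Writing $b=2c+1$, a basis of the even anti-symmetric polynomials in the quotient is therefore $\{V\cdot e_2^a e_3^{2c+1}\}_{a,c\geq 0}$, where the element with parameters $(a,c)$ has polynomial degree $6+2a+6c$.

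The dimension formula then follows from a Hilbert series computation. For $H^2(C,d_0)$ the weight of a graph equals its polynomial degree, so
\begin{equation*}
\sum_{k\geq 0}\dim[H^2(C,d_0)]_k\,t^k \;=\; \frac{t^6}{(1-t^2)(1-t^6)},
\end{equation*}
and one checks directly (from the identity $\lfloor (k-6)/6\rfloor+1=\lfloor k/6\rfloor$ valid for even $k\geq 6$) that its coefficient of $t^k$ is $\lfloor k/6\rfloor$ for even $k$ and zero for odd $k$. For $H^1(C,d_0)$, graphs in $C_1$ carry one additional hair at one of the two four-valent junction vertices, so the weight exceeds the polynomial degree by one; the Hilbert series thus acquires an extra factor of $t$, producing $t^7/((1-t^2)(1-t^6))$, whose coefficient of $t^k$ vanishes for even $k$ and equals $\lfloor k/6\rfloor$ for odd $k$.

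I expect the main technical hurdle to be the careful bookkeeping: verifying the precise correspondence between the cohomology quotients given in the preceding lemma and the anti-symmetric polynomial quotients above, and tracking how the weight gradings on $C_1$ and $C_2$ differ from the total polynomial degree under the identifications of Lemma \ref{Lemma:signs}. Once these identifications are in place, the remainder is a routine coefficient extraction from the geometric series $1/((1-t^2)(1-t^6))$.
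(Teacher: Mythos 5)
Your argument is correct. Note that the paper does not prove this statement itself---it is imported verbatim from [Turchin2014, Theorem 6.2]---and the route you take (identifying the signed $S_3$-coinvariants with anti-symmetric polynomials, using that the alternating polynomials form the free rank-one module $V\cdot\mathbb{K}[e_1,e_2,e_3]$ so that the quotient by $(x+y+z)$ becomes $V\cdot\mathbb{K}[e_2,e_3]$, and then reading off dimensions from $t^6/((1-t^2)(1-t^6))$ with the weight shift for $C_1$) is precisely the invariant-theoretic computation the paper attributes to that reference in the remark preceding the theorem. The only nitpick is a wording slip: in $C_1$ only one of the two junction vertices is four-valent (namely the one carrying the extra hair), but your conclusion that the weight exceeds the polynomial degree by one is correct.
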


\begin{remark}
As vector spaces $E_1^{2,-1}(1)$ and $\mf^2\grt_1/\mf^3\grt_1$ decompose into their respective weight components. That is,
\begin{align*}
E_1^{2,-1}(1)=&\bigoplus\limits_{k\geq 6}{[E_1^{2,-1}(1)]_{k+1}}\\
\mf^2\grt_1/\mf^3\grt_1=&\bigoplus\limits_{k\geq 8}{[\mf^2\grt_1/\mf^3\grt_1]_k}.
\end{align*}
\end{remark}

\begin{corollary}
There is an isomorphism of vector spaces,
\begin{equation*}
\mf^2\grt_1/\mf^3\grt_1\cong E_1^{2,-1}(1).
\end{equation*}
\end{corollary}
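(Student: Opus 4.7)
The plan is to prove the corollary by a weight-graded dimension count, using the two ingredients already on the table. The remark immediately preceding the corollary shows that both sides split into weight-homogeneous pieces, so it suffices to match dimensions weight by weight, allowing for a shift: the $\grt_1$-side is concentrated in even weights $\geq 8$, since every bracket $\{\sigma_{2i+1},\sigma_{2j+1}\}$ has weight $2i+2j+2$, while $E_1^{2,-1}(1)$ sits in odd weights by Theorem \ref{Thm:dimensions}. So any isomorphism must be realized by an odd weight shift.

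For the left-hand side, Theorem \ref{Thm:Ihara} combined with Remark \ref{rmk:relations} yields
\begin{equation*}
\dim [\mf^2\grt_1/\mf^3\grt_1]_k = \lfloor (k-2)/6 \rfloor \quad \text{for even } k \geq 8,
\end{equation*}
and $0$ otherwise. For the right-hand side, Theorem \ref{Thm:dimensions} gives $\dim [E_1^{2,-1}(1)]_{k'} = \lfloor k'/6 \rfloor$ for odd $k'$ and $0$ otherwise. Setting $k' = k - 1$ and writing $k = 6m + r$ with $r \in \{0,2,4\}$, a short case check verifies the elementary identity $\lfloor (k-2)/6 \rfloor = \lfloor (k-1)/6 \rfloor$ in all three cases. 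The weight-graded dimensions therefore agree under the shift $k \leftrightarrow k-1$, and choosing any bijection of bases then supplies the desired isomorphism of vector spaces.

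I do not expect any real obstacle here, since all the substantial work has been packaged into Theorem \ref{Thm:Ihara}, Remark \ref{rmk:relations}, and Theorem \ref{Thm:dimensions}. The only caveat worth flagging is that this dimension argument only produces an abstract isomorphism of vector spaces and does not single out a preferred map. A canonical isomorphism, sending the class of a bracket $\{\sigma_{2i+1},\sigma_{2j+1}\}$ to the class of the graph $\theta_{2i,2j}$, is the content of the first main theorem announced in the introduction, whose proof presumably proceeds through Schneps' classification (Theorem \ref{thm:Schneps}) and will supersede the dimension count given here.
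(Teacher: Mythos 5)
Your proposal is correct and follows essentially the same route as the paper: the paper's proof is exactly the weight-graded dimension count $\dim([E_1^{2,-1}(1)]_{k-1})=\lfloor\frac{k-1}{6}\rfloor=\lfloor\frac{k-2}{6}\rfloor=\dim([\mf^2\grt_1/\mf^3\grt_1]_k)$ for even $k\geq 8$, combining Remark \ref{rmk:relations} with Theorem \ref{Thm:dimensions}. Your explicit case check of the floor identity and your closing caveat about the isomorphism being non-canonical (deferred to Theorem \ref{Theorem:Main}) match the paper's subsequent remark that ``there remains a choice on the isomorphism.''
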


\begin{proof}
From Remark \ref{rmk:relations} and Theorem \ref{Thm:dimensions}, we obtain for all even $k\geq 8$,
\begin{equation*}
\dim([E_1^{2,-1}(1)]_{k-1})=\lfloor \frac{k-1}{6}\rfloor=\lfloor \frac{k-2}{6}\rfloor=\dim([\mf^2\grt_1/\mf^3\grt_1]_k)
\end{equation*}
which implies $[\mf^2\grt_1/\mf^3\grt_1]_k\cong[E_1^{2,-1}(1)]_{k-1}$, and thus $\mf^2\grt_1/\mf^3\grt_1\cong E_1^{2,-1}(1)$.
\end{proof}

Still, there remains a choice on the isomorphism. Next, we determine a generating set for $E_1^{2,-1}(1)$ which will enable us to describe a particularly easy isomorphism.

\subsection{A generating family of $\theta$-graphs}\label{section:basis}

\begin{remark}\label{rmk:action}
The algebra homomorphism $\phi:\mathbb{K}[x,y,z] \rightarrow \mathbb{K}[x,y]$ which on generators is defined via $x\mapsto x$, $y\mapsto y$, $z\mapsto -x-y$, induces an isomorphism,
\begin{equation*}
\phi:\cfrac{\mathbb{K}[x,y,z]}{(x+y+z)}\rightarrow \mathbb{K}[x,y].
\end{equation*}
If we restrict to even polynomials, we still get an isomorphism,
\begin{equation*}
\phi:\cfrac{\mathbb{K}[x,y,z]^{\text{even}}}{(x+y+z)\cap \polyeven}\rightarrow \mathbb{K}[x,y]^{\text{even}}.
\end{equation*}
This enables us to define an $S_3$-action on $\mathbb{K}[x,y]$ via the formula,
\begin{equation*}
\sigma_*(x^{k_1} y^{k_2}):=\phi(\sigma.(x^{k_1}y^{k_2}))
\end{equation*}
for $\sigma \in S_3$. It is indeed a group action, since for $\sigma, \tau\in S_3$, we have
\begin{equation*}
(\sigma\tau)_*(x^{k_1} y^{k_2})=\phi((\sigma\tau).(x^{k_1} y^{k_2}))=\phi(\sigma.(\tau.(x^{k_1} y^{k_2})))=\sigma_*(\phi(\tau.(x^{k_1} y^{k_2})))=\sigma_*(\tau_*(x^{k_1} y^{k_2})),
\end{equation*}
and also 
\begin{equation*}
\id_*(x^{k_1} y^{k_2})=\phi(\id.(x^{k_1} y^{k_2}))=\phi(x^{k_1} y^{k_2})=x^{k_1} y^{k_2}.
\end{equation*}
\end{remark}

\begin{lemma}\label{lemma:equivariance}
The algebra homomorphism $\phi$ is $S_3$-equivariant, that is, for all $\sigma\in S_3$ and $p\in \mathbb{K}[x,y,z]^{\text{even}}$, we have 
\begin{equation*}
\sigma_* \phi(p)=\phi (\sigma.p).
\end{equation*}
\end{lemma}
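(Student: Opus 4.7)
The plan is to reduce the equivariance to the fact that the $S_3$-action on $\poly$ is built from (signed) algebra automorphisms that preserve the ideal $(x+y+z) = \ker(\phi)$, which then lets the action descend along $\phi$ to exactly the action $\sigma_*$ on $\mathbb{K}[x,y]$.

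First, I would observe that the action from Remark 2.7 factors as $\sigma\cdot p = (-1)^{|\sigma|}\tilde\sigma(p)$, where $\tilde\sigma:\poly\to\poly$ is the genuine algebra automorphism permuting $x,y,z$ according to $\sigma$. On monomials this is immediate from the definition, and since both sides are linear in $p$, the identity holds in general. The crucial feature is that $\tilde\sigma$ is multiplicative, even though $\sigma\cdot$ itself is not.

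Second, since $\tilde\sigma$ fixes $x+y+z$, the signed action preserves the principal ideal $(x+y+z)$: explicitly,
\begin{equation*}
\sigma\cdot\bigl((x+y+z)g\bigr)=(-1)^{|\sigma|}(x+y+z)\tilde\sigma(g)\in (x+y+z),
\end{equation*}
for any $g\in\poly$. Hence the $S_3$-action descends along the quotient map $\phi:\poly\twoheadrightarrow \poly/(x+y+z)\cong\mathbb{K}[x,y]$ to a well-defined $S_3$-action on the target.

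Third, I would check that the descended action coincides with $\sigma_*$ from Remark 2.10. This is essentially by construction: for a monomial $p=x^{k_1}y^{k_2}\in\mathbb{K}[x,y]$, viewed inside $\poly$ via the obvious inclusion, one has $\phi(p)=p$, and the descended action evaluates to $\phi(\sigma\cdot p)=\sigma_*(p)$ by the very definition of $\sigma_*$. Since both actions agree on the spanning set of monomials of $\mathbb{K}[x,y]$, they agree everywhere. Equivariance of $\phi$ is then automatic: $\sigma_*\phi(p)=\phi(\sigma\cdot p)$ for every $p\in\poly$, and in particular for $p\in\polyeven$.

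No single step is expected to cause any real trouble; the one observation that turns a potentially messy binomial manipulation into a triviality is the factorization $\sigma\cdot=(-1)^{|\sigma|}\tilde\sigma$ with $\tilde\sigma$ multiplicative, which lets us work with honest algebra morphisms. Alternatively, one could verify the identity by direct computation on the basis $\{x^a y^b z^c\}$, expanding $\phi(x^a y^b z^c)=x^a y^b(-x-y)^c$ via the binomial theorem and checking equality of both sides case by case on generators $(12),(23)$ of $S_3$; this is routine but noticeably more tedious than the descent argument above.
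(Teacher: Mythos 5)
Your proof is correct, and it takes a cleaner, more structural route than the paper. The paper verifies the identity by brute force on monomials: it records the signed multiplicativity $\sigma.(fg)=(-1)^{|\sigma|}(\sigma.f)(\sigma.g)$ and the identity $\phi(\sigma.(-x-y))=\phi(\sigma.z)$, then expands $\phi(x^{k_1}y^{k_2}z^{k_3})=x^{k_1}y^{k_2}(-x-y)^{k_3}$ by the binomial theorem, applies $\sigma_*$ term by term using its definition, and reassembles the sum. Your argument isolates exactly the two facts that make that computation work --- the action is $(-1)^{|\sigma|}$ times an honest algebra automorphism $\tilde\sigma$ permuting the variables, and $\tilde\sigma$ fixes $x+y+z$, hence preserves $\ker\phi=(x+y+z)$ --- and then obtains equivariance from the universal property of the quotient, after checking that the descended action agrees with $\sigma_*$ on the lifts $x^{k_1}y^{k_2}z^0$ (which are genuine lifts because $\phi$ fixes $x$ and $y$). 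This eliminates the binomial bookkeeping and in fact proves the slightly stronger statement that $\sigma_*\phi(p)=\phi(\sigma.p)$ for all $p\in\poly$, not only for even $p$; it also makes transparent why $\sigma_*$ is a group action, a point the paper's Remark \ref{rmk:action} verifies by a computation that implicitly uses the equivariance being proved here. Both arguments rest on the same two observations, so nothing essential is gained or lost mathematically, but your packaging is the tidier one.
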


\begin{proof}
First notice that for $f,g\in p[x,y,z]^{\text{even}}$ and $\sigma\in S_3$ we have, $\sigma.(fg)=(-1)^{|\sigma|}(\sigma.f)\cdot (\sigma.g).$ From this, we deduce $\sigma.(x^{k_1})=(-1)^{|\sigma|(k_1-1)} (\sigma.x)^{k_1}$ and analogously for $y$ and $z$. Moreover, it is easy to check by direct computation that $\phi(\sigma.(-x-y))=\phi(\sigma.z)$ for all $\sigma \in S_3$. Finally, equivariance follows from,
\begin{align*}
&\sigma_*\phi(x^{k_1}y^{k_2}z^{k_3})=\sigma_*(x^{k_1}y^{k_2}(-x-y)^{k_3})=\sum\limits_{j=0}^{k_3}{(-1)^{k_3}{k_3\choose j}\sigma_*(x^{k_1+j}y^{k_2+k_3-j})}\\
=&\sum\limits_{j=0}^{k_3}{(-1)^{k_3}{k_3\choose j}\phi(\sigma.(x^{k_1+j}y^{k_2+k_3-j}))}=\phi(\sigma.\sum\limits_{j=0}^{k_3}{(-1)^{k_3}{k_3\choose j}x^{k_1+j}y^{k_2+k_3-j}})\\
=&\phi(\sigma.(x^{k_1} y^{k_2} (-x-y)^{k_3}))=\phi(\sigma.(x^{k_1})\sigma.(y^{k_2})\sigma.((-x-y)^{k_3}))=\phi(\sigma.(x^{k_1})\sigma.(y^{k_2}))\phi(\sigma.((-x-y)^{k_3}))\\
=&(-1)^{|\sigma|(k_3-1)}\phi(\sigma.(x^{k_1})\sigma.(y^{k_2}))\phi(\sigma.(-x-y))^{k_3}
=(-1)^{|\sigma|(k_3-1)}\phi(\sigma.(x^{k_1})\sigma.(y^{k_2}))\phi(\sigma.z)^{k_3}\\
=&\phi(\sigma.(x^{k_1})\sigma.(y^{k_2}))\phi(\sigma.(z^{k_3}))=\phi(\sigma.(x^{k_1})\sigma.(y^{k_2})\sigma.(z^{k_3}))=\phi(\sigma.(x^{k_1}y^{k_2}z^{k_3})).
\end{align*}
Thus, by linearity, $\sigma_*\phi(p)=\phi(\sigma.p)$ holds for all $p\in \mathbb{K}[x,y,z]^{even}$ and all $\sigma \in S_3$.
\end{proof}

Since $\phi$ preserves the $S_3$-action, we may take coinvariants on both sides to obtain an isomorphism
\begin{equation*}
\phi:\left(\cfrac{\mathbb{K}[x,y,z]^{\text{even}}}{(x+y+z)\cap \polyeven}\right)_{S_3}\longrightarrow \left(\mathbb{K}[x,y]^{\text{even}}\right)_{S_3}.
\end{equation*}
Next, consider the subalgebra $A$ of $\mathbb{K}[x,y]^{\text{even}}$ generated by monomials $x^ay^b$ with $a\leq b$ even, i.e. 
\begin{equation*}
A:=\text{span}( x^a y^b | 0\leq a\leq b \text{ even})
\end{equation*}
We define recursively a linear map,
\begin{align*}
\psi:\mathbb{K}[x,y]^{\text{even}}&\longrightarrow A\\
x^ay^b&\mapsto 
\begin{cases}
0, & \text{if } a=0 \text{ or } b=0, \\
x^a y^b, & \text{if } a\leq b \text{ both even},\\
\frac{-1}{a+1}\psi\left(x^{a+1} y^{b-1}+\sum\limits_{\substack{j=1\\ j\neq a}}^{a+1}{a+1 \choose j} x^j y^{a+b-j}\right), & \text{if } a\leq b \text{ both odd},\\
-\psi(x^b y^a) & \text{if } a\geq b. 
\end{cases}
\end{align*}
Note that for $a=1$, $b\geq 3$ odd, we have $\psi(xy^b)=-\psi(x^2y^{b-1})=-x^2y^{b-1}$. Moreover, for $a\leq b$ both odd, $\psi(x^ay^b)$ may be reformulated as
\begin{equation}\label{eq:psi}
\psi(x^ay^b)=\frac{-1}{a+1}\left(\psi(x^{a+1} y^{b-1})+\sum\limits_{\substack{j=2 \\ j\text{ even}}}^{a+1}{a+1 \choose j} \psi(x^j y^{a+b-j})+\sum\limits_{\substack{j=1 \\ j\text{ odd}}}^{a-2}{a+1 \choose j} \psi(x^j y^{a+b-j})\right).
\end{equation}
The first term and the first sum are obviously well-defined as $\psi$ acts (up to sign) as the identity on monomials with even exponents while the second sum is well-defined by induction on $a$ (the exponent of $x$ is always strictly smaller than $a$). Let us denote by $B$ the subspace of $A$ generated by the image of the relations on $\mathbb{K}[x,y]^{\text{even}}_{S_3}$ under $\psi$, that is,
\begin{equation*}
B:=\text{span}_\mathbb{K}(\psi(\sigma_*v)-\psi(v)|\sigma\in S_3 \text{, } v\in\mathbb{K}[x,y]^{\text{even}} ).
\end{equation*}
In this way, the map $\psi$ induces a well-defined surjection,
\begin{equation*}
\psi:\mathbb{K}[x,y]^{\text{even}}_{S_3}\longrightarrow A/B.
\end{equation*}

\begin{lemma}
Let $i:A/B\rightarrow\mathbb{K}[x,y]^{\text{even}}_{S_3}$, $x^ay^b\mapsto x^ay^b$, $0\leq a\leq b$ even. We have $\psi \circ i=\id$ and $i\circ \psi=\id$. Thus, as vector spaces,
\begin{equation*}
A/B\cong \mathbb{K}[x,y]^{\text{even}}.
\end{equation*}
\end{lemma}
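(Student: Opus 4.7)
The plan is to prove the two equalities separately. The equality $\psi\circ i=\id$ is essentially definitional: every generator $[x^ay^b]$ of $A/B$ has $0\leq a\leq b$ both even, which is exactly case~2 of the recursion, so $\psi(x^ay^b)=x^ay^b$ and $\psi\circ i$ fixes every generator, which is all that needs to be checked by linearity.

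The content is therefore $i\circ\psi=\id$, which by linearity reduces to showing
\begin{equation*}
\psi(x^ay^b)\equiv x^ay^b\quad\text{in }\mathbb{K}[x,y]^{\text{even}}_{S_3}
\end{equation*}
for every monomial $x^ay^b$ of even total degree. I would prove this by strong induction on $a$, verifying that each clause in the recursive definition of $\psi$ matches the coinvariant image of an identifiable $S_3$-relation. For the easy clauses I rely on Lemma~\ref{lemma:equivariance} and the explicit formula $\sigma_*(x^{k_1}y^{k_2})=\phi(\sigma.(x^{k_1}y^{k_2}))$: a direct computation gives $(23)_*(x^k)=-x^k$ and $(13)_*(y^k)=-y^k$, so $x^k\equiv 0\equiv y^k$, which realises case~1, and $(12)_*(x^ay^b)=-x^by^a$ yields the swap relation that realises case~4; case~2 is the identity.

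The main obstacle is the odd--odd case $1\leq a\leq b$, where one needs to pinpoint the single $S_3$-relation reproducing the rather particular shape of the recursion. My guess is to apply $(13)_*$ to the even monomial $x^{a+1}y^{b-1}$. Since $a+1$ is even,
\begin{equation*}
(13)_*(x^{a+1}y^{b-1})=-y^{b-1}(x+y)^{a+1},
\end{equation*}
hence $x^{a+1}y^{b-1}+y^{b-1}(x+y)^{a+1}\equiv 0$. Expanding the binomial, using $y^{a+b}\equiv 0$ from case~1 to discard the $j=0$ term, and isolating the $j=a$ contribution $\binom{a+1}{a}x^ay^b=(a+1)x^ay^b$ should rewrite this as
\begin{equation*}
(a+1)\,x^ay^b+x^{a+1}y^{b-1}+\sum_{\substack{j=1\\ j\neq a}}^{a+1}\binom{a+1}{j}\,x^jy^{a+b-j}\equiv 0,
\end{equation*}
i.e.\ $(a+1)$ times the coinvariant image of the recursion defining $\psi(x^ay^b)$.

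To close, I apply $i$ to the recursion and feed the induction hypothesis into its right-hand side: the odd monomials $x^jy^{a+b-j}$ in the sum have $j\leq a-2<a$, so $i\circ\psi$ already fixes them, while the even-exponent monomials (including $x^{a+1}y^{b-1}$) reduce to case~2 or case~4. Combining with the displayed $S_3$-relation yields $(a+1)\cdot i(\psi(x^ay^b))\equiv(a+1)\,x^ay^b$, and dividing by $a+1$ finishes the induction. The genuinely non-routine step is recognising that the coefficient $a+1$, the separated term $x^{a+1}y^{b-1}$, and the exclusion $j\neq a$ in the recursion are all forced by this single $(13)$-symmetry applied to $x^{a+1}y^{b-1}$.
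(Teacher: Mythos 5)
Your proposal is correct and takes essentially the same route as the paper: $\psi\circ i=\id$ is definitional, and $i\circ\psi=\id$ is proved by induction on $a$, with the crucial odd--odd case resolved by precisely the relation $x^{a+1}y^{b-1}-(13)_*\bigl(x^{a+1}y^{b-1}\bigr)=x^{a+1}y^{b-1}+y^{b-1}(x+y)^{a+1}\equiv 0$ (together with $y^{a+b}\equiv 0$) that the paper's computation uses. The only difference is cosmetic: you isolate the $(13)$-relation first and then substitute, whereas the paper expands $i\psi(x^ay^b)$ and recognizes $(-x-y)^{a+1}y^{b-1}$ as the $(13)_*$-image at the end, and you fold the paper's separate base case $a=1$ into the general inductive step.
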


\begin{proof}
Since $\psi$ is the identity on $x^ay^b$, $0\leq a\leq b$ even, clearly $\psi\circ i=\id$. For the other composition, we proceed by induction. For $a=1$, $b\geq 3$ odd, we have $i\psi(xy^b)=-i(x^2y^{b-1})=-x^2y^{b-1}$ which can easily be checked to equal $xy^{b}$ in $\mathbb{K}[x,y]^{\text{even}}_{S_3}$. Moreover, whenever $a$ and $b$ are even, $i\psi(x^ay^b)=x^ay^b\in \mathbb{K}[x,y]^{\text{even}}_{S_3}$. Therefore, for $a\leq b$ odd,
\begin{align*}
i\psi(x^ay^b)=&\frac{-1}{a+1}\left(i\psi(x^{a+1} y^{b-1})+\sum\limits_{\substack{j=2 \\ j\text{ even}}}^{a+1}{a+1 \choose j} i\psi(x^j y^{a+b-j})+\sum\limits_{\substack{j=1 \\ j\text{ odd}}}^{a-2}{a+1 \choose j} i\psi(x^j y^{a+b-j})\right)\\
=&\frac{-1}{a+1}\left(x^{a+1} y^{b-1}+\sum\limits_{\substack{j=2 \\ j\text{ even}}}^{a+1}{a+1 \choose j} x^j y^{a+b-j}+\sum\limits_{\substack{j=1 \\ j\text{ odd}}}^{a-2}{a+1 \choose j} x^j y^{a+b-j}\right)\\
=&\frac{-1}{a+1}\left(x^{a+1}y^{b-1}+(-x-y)^{a+1}y^{b-1}-(a+1)x^ay^b\right)\\
=&\frac{-1}{a+1}\left(x^{a+1}y^{b-1}-(13)_*(x^{a+1}y^{b-1})-(a+1)x^ay^b\right)=x^ay^b\in \mathbb{K}[x,y]^{\text{even}}_{S_3},
\end{align*}
where in the first line we used that by induction all compositions $i\circ\psi$ appearing on the right hand side equal $\id$.
\end{proof}

\begin{remark}\label{rmk:generate}
Under the chain of identifications,
\begin{equation*}
A/B\cong \mathbb{K}[x,y]^{\text{even}}_{S_3}\cong \left(\cfrac{\mathbb{K}[x,y,z]^{\text{even}}}{(x+y+z)\cap \polyeven}\right)_{S_3}\cong E_1^{2,-1}(1)
\end{equation*}
a basic element $x^{2i}y^{2j}\in A/B$ corresponds to the theta graph $\theta_{2i,2j}\in E_1^{2,-1}(1)$ that has only two of its main strands decorated by $2i$ and $2j$ hair, respectively (see Figure \ref{figure:thetaicg}). In particular, we find that these graphs generate $E_1^{2,-1}(1)$.

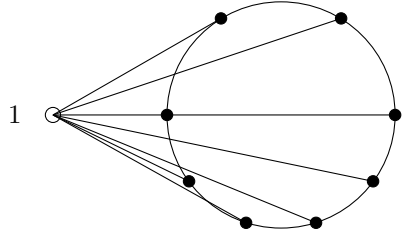
\begin{figure}[ht]
\centering
\begin{tikzpicture}

\draw (0,0) circle (1.5cm);
\draw(-1.5,0) -- (1.5,0);

\draw[black,fill=black](0.79,1.28) circle (0.075cm);
\draw[black,fill=black](-0.79,1.28) circle (0.075cm);

\draw[black,fill=black](1.21,-0.88) circle (0.075cm);
\draw[black,fill=black](-1.21,-0.88) circle (0.075cm);
\draw[black,fill=black](0.46,-1.43) circle (0.075cm);
\draw[black,fill=black](-0.46,-1.43) circle (0.075cm);

\draw[black,fill=black](-1.5,0) circle (0.075cm);
\draw[black,fill=black](1.5,0) circle (0.075cm);

\draw(-3,0) circle (0.1cm);

\draw(-3,0) -- (-1.5,0);

\draw(0.79,1.28) -- (-3,0);
\draw(-0.79,1.28) -- (-3,0);

\draw(1.21,-0.88) -- (-3,0);
\draw(-1.21,-0.88) -- (-3,0);
\draw(0.46,-1.43) -- (-3,0);
\draw(-3,0) -- (-0.46,-1.43);

\node at (-3.5,0) {$1$};

\end{tikzpicture}
\caption{The graph $\theta_{2,4}$.}\label{figure:thetaicg}
\end{figure}

\end{remark}

\subsection{Main theorem and relations among $\theta$-graphs}
Our main result states the following.

\begin{theorem}\label{Theorem:Main}
Let $\theta_{2i,2j}\in E_1^{2,-1}(1)$ denote the theta graph that has only two of its main strands decorated by $2i$ and $2j$ hair, respectively. The map
\begin{align*}
\Phi:\mf^2\grt_1/\mf^3\grt_1&\rightarrow E_1^{2,-1}(1)\\
\{\sigma_{2i+1},\sigma_{2j+1}\} \mod \mf^3\grt_1&\mapsto \theta_{2i,2j}.
\end{align*}
is an isomorphism of vector spaces.
\end{theorem}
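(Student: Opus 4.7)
The plan is to prove the theorem in three steps: surjectivity, injectivity via a dimension count, and (the main step) well-definedness of $\Phi$. Surjectivity is immediate from Theorem \ref{Thm:Ihara} (the brackets $\{\sigma_{2i+1},\sigma_{2j+1}\}$ span $\mf^2\grt_1/\mf^3\grt_1$) and Remark \ref{rmk:generate} (the graphs $\theta_{2i,2j}$ span $E_1^{2,-1}(1)$). Once well-definedness is established, injectivity follows from the Corollary preceding the theorem: $\Phi$ preserves weight, sending weight $k$ to weight $k-1$, and a weight-preserving surjection between finite-dimensional spaces of equal dimension is an isomorphism.

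For well-definedness, suppose $\sum_i a_i\{\sigma_{2i+1},\sigma_{k-1-2i}\}\equiv 0 \pmod{\mf^3\grt_1}$. By Theorem \ref{thm:Schneps} together with Remark \ref{rmk:identification}, this is equivalent to the polynomial
\begin{equation*}
G(\alpha,\beta,\gamma) = \sum_i a_i(\alpha-\beta)^{2i}(\beta-\gamma)^{k-2-2i} \in \mathbb{K}[\alpha,\beta,\gamma]
\end{equation*}
satisfying $G+(13).G=0$ and $G+(123).G+(132).G=0$ under the no-sign $S_3$-action on $\mathbb{K}[\alpha,\beta,\gamma]$.

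A first observation: view $G(x,y,z)$ as an element of $\polyeven$ (as $k$ is even, $G$ is homogeneous of even degree). Schneps's second identity, evaluated at $(\alpha,\beta,\gamma)=(x,y,z)$, reads $G(x,y,z)+G(z,x,y)+G(y,z,x)=0$ in $\polyeven$. Since $(123)$ is an even permutation, its sign action on $\polyeven$ carries no sign twist, so in the coinvariants defining $E_1^{2,-1}(1)\cong (\polyeven/((x+y+z)\cap\polyeven))_{S_3}$ we have $[G(x,y,z)]=[G(z,x,y)]=[G(y,z,x)]$. Therefore Schneps's identity forces $3[G(x,y,z)]=0$, i.e.\ $[G(x,y,z)]=0$ in $E_1^{2,-1}(1)$.

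It then remains to identify $[G(x,y,z)]$ with $\sum_i a_i[x^{2i}y^{k-2-2i}]=\sum_i a_i[\theta_{2i,k-2-2i}]$ in $E_1^{2,-1}(1)$. My plan is to carry out this identification in the explicit basis $A/B$ of Section \ref{section:basis} by applying $\psi\circ\phi$ to both expressions (using the $S_3$-equivariance of $\phi$ from Lemma \ref{lemma:equivariance}), and then using Schneps's first identity $G+(13).G=0$ together with the explicit expansion of $G$ to reduce $\psi(\phi(G(x,y,z)))$ to a nonzero weight-dependent scalar multiple of $\sum_i a_i\psi(x^{2i}y^{k-2-2i})$. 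The main obstacle is this final matching, which requires bridging the no-sign $S_3$-action underlying Schneps's classification with the sign action on $\polyeven$ intrinsic to the graph complex.
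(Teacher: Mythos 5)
Your surjectivity argument and your dimension-count injectivity are both fine; indeed, the dimension count via the Corollary (which gives $\dim[E_1^{2,-1}(1)]_{k-1}=\lfloor\frac{k-2}{6}\rfloor=\dim[\mf^2\grt_1/\mf^3\grt_1]_k$) is a clean substitute for the paper's terser remark that its well-definedness argument ``may be traced backwards''. The gap is in well-definedness, which is the heart of the theorem. Your first observation does correctly show that the class of $G(x,y,z)=\sum_i a_i(x-y)^{2i}(y-z)^{k-2-2i}$ vanishes in $\left(\polyeven/(x+y+z)\cap\polyeven\right)_{S_3}$: the cyclic Schneps identity is an identity of honest polynomials, even permutations act without sign in the coinvariants, hence $3[G(x,y,z)]=0$. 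But $[G(x,y,z)]$ is not the class $\Phi(L)$. The class you must kill is $\sum_i a_i[x^{2i}y^{k-2-2i}]$, and under the isomorphism $\polyeven/((x+y+z)\cap\polyeven)\cong\mathbb{K}[\alpha-\beta,\beta-\gamma]^{\text{even}}$ given by $x\mapsto\alpha-\beta$, $y\mapsto\beta-\gamma$, $z\mapsto\gamma-\alpha$, it is \emph{this} class --- not $G$ with $(\alpha,\beta,\gamma)$ replaced by $(x,y,z)$ --- that corresponds to Schneps's polynomial $G(\alpha,\beta,\gamma)$. Concretely, writing $u=\alpha-\beta$, $v=\beta-\gamma$, your element corresponds to $\sum_i a_i(u-v)^{2i}(u+2v)^{k-2-2i}$ while the target corresponds to $\sum_i a_i u^{2i}v^{k-2-2i}$; they differ by the linear substitution $(u,v)\mapsto(u-v,u+2v)$, which has determinant $3$ and so is not induced by any element of $S_3$, and there is no a priori reason for it to act by a nonzero scalar on the coinvariants. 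Thus the implication ``$[G(x,y,z)]=0\Rightarrow\Phi(L)=0$'' is exactly the ``final matching'' you defer to an unperformed $\psi\circ\phi$ computation; nothing in the proposal establishes it.

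The paper's proof avoids this issue by never substituting $(\alpha,\beta,\gamma)=(x,y,z)$: it transports both Schneps identities backwards through the (non-$S_3$-equivariant) isomorphism above, staying with the representative $\sum_i a_i x^{2i}y^{k-2-2i}$ throughout. Combining the antisymmetry $G+(13).G=0$ (which gives $a_i=-a_{k/2-1-i}$) with the cyclic identity yields the $S_3$-invariant relation $\sum_i a_i\sum_{\sigma\in S_3}\sigma.(x^{2i}y^{k-2-2i})=0$, whose image in the coinvariants is $3!\sum_i a_i x^{2i}y^{k-2-2i}$, forcing $\Phi(L)=\sum_i a_i\theta_{2i,k-2-2i}=0$. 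To repair your argument you would need either to redo the computation on the correct representative as the paper does, or to actually prove that the substitution $(u,v)\mapsto(u-v,u+2v)$ acts invertibly (say, by a nonzero scalar on each weight component) on $\left(\polyeven/(x+y+z)\cap\polyeven\right)_{S_3}$; neither is in the proposal.
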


\begin{proof}
We show that $\Phi$ is well-defined, injective and surjective. It is a map degree $-1$ with respect to the weight grading. It will be enough to check its properties on elements of homogeneous degree. Let $k\geq 8$ and
\begin{equation*}
L:=\sum\limits_{i=1}^{\frac{k-4}{2}}a_i \{\sigma_{2i+1},\sigma_{k-1-2i}\}=0\mod \mf^3\grt_1.
\end{equation*}
By Remark \ref{rmk:identification}, this is equivalent to $G=\sum\limits_{i=1}^{\frac{k-4}{2}} a_i (\alpha-\beta)^{2i}(\beta-\gamma)^{k-2-2i}$ satisfying $G+(13).G=0$ and $G+(123).G+(132).G=0$. Note that the first equation implies that $a_i=-a_{k/2-1-i}$, and thus we may write $G$ as,
\begin{equation*}
G=\sum\limits_{i=1}^{\lfloor\frac{k-4}{2}\rfloor} a_i \left((\alpha-\beta)^{2i}(\beta-\gamma)^{k-2-2i}-(\alpha-\beta)^{k-2-2i}(\beta-\gamma)^{2i}\right).
\end{equation*}
Also, remark that $G\in \mathbb{K}[\alpha-\beta,\beta-\gamma]^{\text{even}}$ and that there is an isomorphism of algebras,
\begin{equation*}
\cfrac{\mathbb{K}[x,y,z]^{\text{even}}}{(x+y+z)\cap \polyeven}\longrightarrow \mathbb{K}[\alpha-\beta,\beta-\gamma]^{\text{even}},
\end{equation*}
induced by the surjective map $\mathbb{K}[x,y,z]^{\text{even}}\rightarrow \mathbb{K}[\alpha-\beta,\beta-\gamma]^{\text{even}}$, $x\mapsto \alpha-\beta$, $y\mapsto \beta-\gamma$, $z\mapsto \gamma-\alpha$. Note that the isomorphism is not $S_3$-equivariant. Consider the preimage of the second equation for $G$ under this isomorphism. It is represented by,
\begin{equation*}
\sum\limits_{i=1}^{\lfloor\frac{k-4}{2}\rfloor} a_i \sum\limits_{\sigma\in S_3} \sigma. x^{2i}y^{k-2-2i}=0.
\end{equation*}
This is an equation of $S_3$-invariant elements, and thus the linear combination on the left is zero also in the space of invariants,
\begin{equation*}
\left(\frac{\mathbb{K}[x,y,z]^{\text{even}}}{(x+y+z)\cap \polyeven}\right)^{S_3},
\end{equation*}
which in turn is isomorphic to the space of coinvariants. Moreover, under the corresponding isomorphism, the above equation reads,
\begin{equation*}
3! \sum\limits_{i=1}^{\lfloor\frac{k-4}{2}\rfloor} a_i x^{2i}y^{k-2-2i}=0\in \left(\frac{\mathbb{K}[x,y,z]^{\text{even}}}{(x+y+z)\cap \polyeven}\right)_{S_3}\cong E_1^{2,-1}(1).
\end{equation*}
Hence, since $\theta_{2i,k-2-2i}=-\theta_{k-2-2i,2i}$,
\begin{equation*}
\Phi(L)=\sum\limits_{i=1}^{\frac{k-4}{2}} a_i \theta_{2i,k-2-2i}=\sum\limits_{i=1}^{\frac{k-4}{2}}a_i (\theta_{2i,k-2-2i}-\theta_{k-2-2i,2i})=2 \cdot \sum\limits_{i=1}^{\lfloor\frac{k-4}{2}\rfloor} a_i \theta_{2i,k-2-2i}=0.
\end{equation*}
In fact, since we are only dealing with isomorphisms, the argument above may be traced backwards to imply the injectivity of $\Phi$. Since the graphs $\theta_{2i,2j}$ generate $E_1^{2,-1}(1)$, $\Phi$ is clearly surjective.
\end{proof}

Remark \ref{rmk:generate} and Theorem \ref{Theorem:Main} establish the following equivalence. For even $k\geq 8$,
\begin{equation*}
\sum\limits_{i=1}^{\lfloor\frac{k-4}{2}\rfloor} a_i x^{2i}y^{k-2-2i}=0\in A/B \Leftrightarrow \sum\limits_{i=1}^{\lfloor\frac{k-4}{2}\rfloor} a_i \{\sigma_{2i+1},\sigma_{k-1-2i}\}=0 \mod \mf^3\grt_1.
\end{equation*}
On the left hand side, finding non-trivial coefficients $a_i$ amounts to finding elements which are zero in $\mathbb{K}[x,y]^{\text{even}}_{S_3}$, but are sent to a non-trivial linear combination in $A$ by $\psi$. Natural candidates which might satisfy this condition are expressions of the form $\phi(x^a y^b z^a)=x^ay^b (-x-y)^a$ where $2a+b=k$. Their image $\psi(x^a y^b (-x-y)^a)$ might be equal to zero already in $A$. If not, however, this will produce a relation in $A/B$. Here is a list for the first few relations obtained in this way.
\begin{center}
\begin{tabular}{ c c }
&\\
  $k=10:$ & $0=\psi(x^3y^4(-x-y)^3)=\frac{1}{2} (-3 x^4 y^6 + x^2 y^8) $ \\
   &  \\
  $k=14:$ & $ 0=\psi(x^4y^6(-x-y)^4)=\frac{1}{3}(11 x^6 y^8 - 7 x^4 y^{10} + 2 x^2 y^{12})$ \\
  & \\
  $k=16:$ & $0=\psi(x^5y^6(-x-y)^5)=\frac{1}{12}(26 x^6 x^{10} -25 x^4 y^{12} +8 x^2 y^{14})$ \\
  &\\
  $k=18:$ & $0=\psi(x^5y^8(-x-y)^5)=\frac{1}{2}(-13 x^8 y^{10} + 14 x^6 y^{12} - 10 x^4 y^{14} + 3 x^2 y^{16})$\\
  &\\
  $k=20:$ & $0=\psi(x^6y^8(-x-y)^6)=\frac{1}{10}(-85 x^8 y^{12} + 136 x^6 y^{14} - 105 x^4 y^{16} + 32 x^2 y^{18})$\\
  &
\end{tabular}
\end{center}
These yield linear relations for the corresponding two-loops graphs $\theta_{2i,2j}$. Via the isomorphism $\Phi$, we recover the relations from Remark \ref{rmk:relations} for the brackets $\{\sigma_{2i+1},\sigma_{2j+1}\}$ in $\mf^2\grt_1/\mf^3\grt_1$. The coefficients in the list below agree with L. Schneps' calculations \cite{Schneps2006}.
\begin{center}
\begin{tabular}{ c c }
&\\
  $k=12:$ & $0=-3 \{\sigma_{5},\sigma_{7}\}+ \{\sigma_{3},\sigma_{9}\} \mod \mf^3\grt_1$ \\
   &  \\
  $k=16:$ & $ 0=11 \{\sigma_{7},\sigma_{9}\} - 7 \{\sigma_{5},\sigma_{11}\} + 2 \{\sigma_{3},\sigma_{13}\}\mod \mf^3\grt_1$ \\
  & \\
  $k=18:$ & $0=26 \{\sigma_{7},\sigma_{11}\} -25 \{\sigma_{5},\sigma_{13}\} +8\{\sigma_{3},\sigma_{15}\} \mod \mf^3\grt_1$ \\
  &\\
  $k=20:$ & $0=-13\{\sigma_{9},\sigma_{11}\} + 14 \{\sigma_{7},\sigma_{13}\}- 10\{\sigma_{5},\sigma_{15}\}  + 3\{\sigma_{3},\sigma_{17}\} \mod \mf^3\grt_1$\\
  &\\
  $k=22:$ & $0=-85 \{\sigma_{9},\sigma_{13}\}+ 136 \{\sigma_{7},\sigma_{15}\} - 105\{\sigma_{5},\sigma_{17}\}  + 32\{\sigma_{3},\sigma_{19}\} \mod \mf^3\grt_1$\\
\end{tabular}
\end{center}
In weight $k=24$, we expect, from Goncharov's \cite{Goncharov2001}, and Ihara and Takao's \cite{Ihara2002} formula, two independent linear relations for the brackets $\{\sigma_{2i+1},\sigma_{24-1-2i}\}$ modulo $\mf^3\grt_1$. These can be found by calculating for instance $\psi(x^6y^{10}(-x-y)^6)$ and $\psi(x^7y^8(-x-y)^7)$. The relations in $\mf^2\grt_1/\mf^3\grt_1$ obtained in this way are,
\begin{align*}
20\{\sigma_{11},\sigma_{13}\}-33\{\sigma_{9},\sigma_{15}\}+44\{\sigma_{7},\sigma_{17}\}-33\{\sigma_{5},\sigma_{19}\}+10\{\sigma_{3},\sigma_{21}\}=&0 \mod \mf^3\grt_1\\
-672\{\sigma_{11},\sigma_{13}\}+915\{\sigma_{9},\sigma_{15}\}-1106\{\sigma_{7},\sigma_{17}\}+805\{\sigma_{5},\sigma_{19}\}-242\{\sigma_{3},\sigma_{21}\}=&0\mod \mf^3\grt_1.
\end{align*}

\section{Relations to M. Kontsevich's graph complex}
We claim that the equivalence between $\grt_1$ in depth two modulo higher depth and the cohomology of the space of two-loop graphs is not a coincidence. For this, let us recall one further graph complex, $\GC$.

\subsection{Definitions}
The graph complex $\GC$ is a variant of M. Kontsevich's graph complex (\cite{Kontsevich1993},\cite{Kontsevich1994},\cite{Kontsevich1997}). We follow T. Willwacher's paper \cite{Willwacher2014}. 
\begin{definition}
Let  $\Gamma$ be an undirected graph with $N$ labeled vertices and $k$ edges satifying the following properties:
\begin{enumerate}
	\item{All vertices have valence at least three.}
	\item{There is a linear order on the set of edges.}
	\item{$\Gamma$ has no simple loops.}
\end{enumerate}
We denote by $\Gra(N,k)$ the graded vector space spanned by isomorphism classes of connected graphs satisfying the conditions above, modulo the relation $\Gamma\cong(-1)^{|\sigma|}\Gamma^{\sigma}$, where $\Gamma^\sigma$ differs from $\Gamma$ just by a permutation $\sigma\in S_k$ on the order of the edges. The degree of such a graph $\Gamma$ is given by
\begin{equation*}
\deg_{\Gra}\Gamma=-k.
\end{equation*}
\end{definition}
Set,
\begin{equation*}
\Gra(N):=\bigoplus\limits_{k\geq 0}\Gra(N,k).
\end{equation*}
The collection $\{\Gra(N)\}_{N\geq 1}$ naturally defines an operad $\Gra$ in the category of graded vector spaces. For $\Gamma\in \Gra(N)$, the $S_N$-action permutes the labels of the vertices. For $r,s\geq 1$, $\Gamma_1\in \Gra(r)$ and $\Gamma_2\in \Gra(s)$, the operadic composition $\Gamma_1\circ_j\Gamma_2\in \Gra(r+s-1)$ is given by inserting the graph $\Gamma_2$ at vertex $j$ of $\Gamma_1$ and summing over all ways of reconnecting the edges incident to vertex $j$ in $\Gamma_1$ to vertices of $\Gamma_2$. As in the case of $\icg$, we ask that the order on the set of edges of $\Gamma_1\circ \Gamma_2$ is such that all edges of $\Gamma_1$ come before those of $\Gamma_2$ while the respective orderings are left unaltered. Next, define,
\begin{equation*}
\GC:=\bigoplus\limits_{N\geq 1} \left( \Gra(N)[2-2N]\right)^{S_N}.
\end{equation*}
The space $\GC$ carries the structure of a differential graded Lie algebra. The degree of a graph $\Gamma\in \GC$ with $k$ edges and $N$ vertices is
\begin{equation*}
\deg\Gamma=-2-k+2N.
\end{equation*}
For the Lie bracket, consider the operadic pre-Lie product on $\Gra$,
\begin{equation*}
\Gamma_1\circ\Gamma_2=\sum\limits_{j=1}^{r} \Gamma_1\circ_j\Gamma_2.
\end{equation*}
Using this, the Lie bracket on $\GC$ is defined on homogeneous elements via,
\begin{equation*}
[\Gamma_1,\Gamma_2]:=\Gamma_1\circ\Gamma_2-(-1)^{\deg \Gamma_1\cdot \deg\Gamma_2}\Gamma_2\circ\Gamma_1.
\end{equation*}
The differential $d$ is given by vertex splitting, where again we ask that the newly created edge is placed last in the ordering of the edges. 

We define the following descending filtration on $\GC$.
\begin{equation*}
\mf^p\GC:=\{ \gamma \in \GC| \# \text{vertices} - \text{maximal valence} \geq p\}
\end{equation*}
where $p\geq 1$. Note that $\mf^1\GC=\GC$.

\begin{lemma}
The filtration is compatible with the differential graded Lie algebra structure on $\GC$, i.e. for all $p,q\geq 1$,
\begin{align*}
d(\mf^p\GC)\subset& \mf^p\GC\\
[\mf^p\GC,\mf^q\GC]\subset& \mf^{p+q}\GC
\end{align*}
Thus, $\mf^pH^0(\GC):=\{ \Gamma \in H^0(\GC)| \# \text{vertices} - \text{maximal valence} \geq p\}$ defines a descending filtration on $H^0(\GC)$.
\end{lemma}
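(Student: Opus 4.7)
The plan is to introduce the combinatorial invariant $\mu(\Gamma):=V(\Gamma)-M(\Gamma)$, where $V(\Gamma)$ is the number of vertices of $\Gamma$ and $M(\Gamma)$ is its maximal valence, and to track its behaviour under the two generating operations: vertex splitting, which produces $d$, and operadic insertion $\Gamma_1\circ_j\Gamma_2$, from which the pre-Lie product and the bracket are built. The two statements then reduce to showing that $\mu(\Gamma')\geq\mu(\Gamma)$ for every graph $\Gamma'$ appearing in $d\Gamma$, and that $\mu(\Gamma')\geq\mu(\Gamma_1)+\mu(\Gamma_2)$ for every $\Gamma'$ appearing in $[\Gamma_1,\Gamma_2]$.

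For the differential, splitting a vertex $v$ of valence $k\geq 4$ produces two new vertices of valences $a+1$ and $k-a+1$ with $2\leq a\leq k-2$ joined by the new edge, so both new valences are at most $k-1$ and the vertex count grows by one. A short case analysis according to whether $v$ is the unique maximum-valence vertex, a non-unique maximum, or not a maximum at all, shows that $\mu$ strictly increases in every case. Hence each term in $d\Gamma$ has $\mu$-value at least $\mu(\Gamma)$, which gives $d(\mf^p\GC)\subset\mf^p\GC$ (in fact into $\mf^{p+1}\GC$).

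For the bracket, I would fix an insertion vertex $j$ of $\Gamma_1$ of valence $v_j\leq M_1$ and observe that any term in $\Gamma_1\circ_j\Gamma_2$ has $V_1+V_2-1$ vertices and maximal valence bounded by $\max(M_1,M_2+v_j)\leq M_1+M_2$; here $M_2+v_j$ captures the worst case in which all $v_j$ reconnected dangling edges land on a single vertex of $\Gamma_2$. Expanding yields
\begin{equation*}
\mu(\Gamma_1\circ_j\Gamma_2)\ \geq\ \mu(\Gamma_1)+\mu(\Gamma_2)-1,
\end{equation*}
with equality only in the extremal configuration where $v_j=M_1$ and all $v_j$ dangling edges are reconnected to a single vertex $w^*$ of $\Gamma_2$ of valence $M_2$. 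All other reconnections, and all other choices of $j$, yield summands already in $\mf^{p+q}\GC$.

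To close the remaining off-by-one gap I would match the extremal summands pairwise in $[\Gamma_1,\Gamma_2]=\Gamma_1\circ\Gamma_2-(-1)^{\deg\Gamma_1\deg\Gamma_2}\Gamma_2\circ\Gamma_1$. The graph $G_{j,w^*}$ produced from $\Gamma_1\circ_j\Gamma_2$ by routing all $v_j$ edges to $w^*$ is isomorphic as an abstract graph to the one produced from $\Gamma_2\circ_{w^*}\Gamma_1$ by routing all $v_{w^*}$ edges to $j$; as edge-ordered graphs they differ only by the shuffle swapping the two blocks of edges, contributing a sign $(-1)^{k_1k_2}$, where $k_i$ denotes the number of edges of $\Gamma_i$. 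Since $\deg\Gamma=-2-k+2V$ forces $\deg\Gamma_i\equiv k_i\pmod 2$, this shuffle sign exactly matches the Koszul sign $(-1)^{\deg\Gamma_1\deg\Gamma_2}$ in the graded commutator, and the extremal contributions cancel pairwise, leaving $[\Gamma_1,\Gamma_2]\in\mf^{p+q}\GC$. The delicate step, and the main obstacle, is this sign bookkeeping: one has to verify that the identification $G_{j,w^*}\cong G'_{w^*,j}$ is compatible with the $S_N$-invariant convention on vertex labels built into $\GC=\bigoplus_{N\geq 1}(\Gra(N)[2-2N])^{S_N}$, and that the residual edge permutation is indeed the block-swap claimed.
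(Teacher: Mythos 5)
Your argument is correct and follows essentially the same route as the paper: track $V-M$, note that vertex splitting raises it by at least one, that insertion loses at most one, and that the extremal terms (all dangling edges routed to the maximal-valence vertex of the other graph) cancel pairwise in the graded commutator. Your explicit verification that the block-swap sign $(-1)^{k_1k_2}$ matches the Koszul sign $(-1)^{\deg\Gamma_1\deg\Gamma_2}$ is in fact more careful than the paper, which simply asserts the cancellation ``due to symmetry.''
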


\begin{proof}
Let $\Gamma\in \mf^{p}\GC$, $m_{\Gamma}:=\text{maximal valence of } \Gamma$ and $v_\Gamma:=\#\text{vertices of } \Gamma$. Note that the differential decreases the valence of a vertex at least by one while also creating one additional vertex (whose valence will be maximally equal to the valence of the splitted vertex minus one). Therefore, $m_{d\Gamma}\leq m_{\Gamma}$ and $v_{d\Gamma}=v_\Gamma+1$, and,
\begin{equation*}
v_{d\Gamma}-m_{d\Gamma}\geq v_\Gamma+1-m_\Gamma\geq p+1\geq p.
\end{equation*}
Let $\Gamma_1\in \mf^{p_1}\GC$ and $\Gamma_2\in \mf^{p_2}\GC$. For $i=1,2$, set $m_i:=\text{maximal valence of } \Gamma_i$ and $v_i:=\#\text{vertices of } \Gamma_i$. Denote by $m:=\#\text{maximal valence of } [\Gamma_1,\Gamma_2]$ and $v:=\#\text{vertices of } [\Gamma_1,\Gamma_2]$. Note that the insertion operation $\Gamma_1\circ \Gamma_2$ decreases the total number of vertices by 1. Hence, $v=v_1+v_2-1$. On the other hand, assuming the vertices of maximal valence are labeled by 1, the expression,
\begin{equation*}
\Gamma_1\circ_1\Gamma_2 -\Gamma_2\circ_1\Gamma_1,
\end{equation*}
produces two graphs with maximal valence $m_1+m_2$. One is obtained when the vertex of maximal valence in $\Gamma_1$ is replaced by $\Gamma_2$ and all ``loose" edges are reattached to the vertex of maximal valence in $\Gamma_2$. The other stems from an analogous construction, but with the roles of $\Gamma_1$ and $\Gamma_2$ exchanged. Luckily, due to symmetry, these two terms always cancel each other out. All other insertions yield $m\leq m_1+m_2-1$, and thus
\begin{equation*}
v-m\geq v_1+v_2-1-(m_1+m_2-1)\geq p_1+p_2,
\end{equation*}
from which the statement follows.
\end{proof}

\begin{remark}\label{remark:irreducible}
A graph in $\GC$ is called 1-vertex irreducible if it stays connected after deletion of any single vertex.
Denote by $\GC^{1-vi}$ the subcomplex of $(\GC,d)$ spanned by 1-vertex irreducible graphs. As shown in \cite{Conant2005}, the subcomplex $\GC^{1-vi}$ is quasi-isomorphic to $\GC$. We may thus assume that all cohomology classes we work with are represented by 1-vertex irreducible graphs.
\end{remark}

\subsection{Compatibility with T. Willwacher's isomorphism}
The graph complex $\GC$ is related to the Grothendieck-Teichm\"uller Lie algebra via its degree zero cohomology. More precisely, T. Willwacher's important result states the following.

\begin{theorem}[\cite{Willwacher2014}, Theorem 1.1.]
There is an isomorphism of Lie algebras,
\begin{equation*}
\omega: H^0(\GC)\overset{\cong}{\longrightarrow} \grt_1. 
\end{equation*}
\end{theorem}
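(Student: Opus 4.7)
The plan is to realize both $H^0(\GC)$ and $\grt_1$ as the Lie algebra of infinitesimal symmetries of a single third object, namely the rationalized little $2$-disks operad (equivalently, its Tamarkin-formal model given by the Gerstenhaber operad $\mathsf{Ger}$). The argument therefore has three parts: build a map $\GC \to \mathrm{Def}(\mathsf{Ger})$, build an identification $\mathrm{Def}(\mathsf{Ger}) \cong \grt_1$ via formality, and verify that the composition is an isomorphism of Lie algebras.

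For the first part, following M. Kontsevich, every graph $\Gamma \in \GC$ acts on $\mathsf{Ger}$ by interpreting edges as bivector contractions and summing over all ways of inserting Gerstenhaber generators at the vertices; this yields a morphism of dg Lie algebras $\GC \to \mathrm{Def}(\mathsf{Ger} \xrightarrow{\mathrm{id}} \mathsf{Ger})$. Passing to the quasi-isomorphic subcomplex $\GC^{1-vi}$ of Remark~\ref{remark:irreducible} and killing acyclic pieces (tadpoles, graphs with bivalent vertices, multi-edge subgraphs), one identifies $H^0(\GC)$ with the zeroth deformation cohomology. For the second part I would appeal to D. Tamarkin's formality theorem in its operadic form (Kontsevich--Soibelman): the space of formality quasi-isomorphisms $\mathsf{Ger} \xrightarrow{\sim} C_\bullet(\mathsf{E}_2)$ is a torsor under $\exp(\grt_1)$, and its infinitesimal action identifies $\grt_1$ with the Lie algebra of homotopy classes of derivations of $\mathsf{Ger}$. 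Composing the two identifications gives $\omega$, and bracket compatibility is automatic since on both sides the Lie bracket arises by antisymmetrization of the pre-Lie insertion/composition product, which under the deformation-theoretic dictionary produces precisely the formula $\{\psi_1,\psi_2\} = D_{\psi_1}\psi_2 - D_{\psi_2}\psi_1 + [\psi_1,\psi_2]$.

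The main obstacle is the faithfulness of this comparison. Injectivity demands that no nontrivial graph cocycle acts trivially on $\mathsf{Ger}$, which I would verify by exhibiting explicit actions on free Gerstenhaber algebras that separate cocycles. Surjectivity is the subtler step: every element of $\grt_1$ must lift to a graph cocycle. A natural route is to construct explicit representatives of the conjectural generators $\sigma_{2k+1}$ (for example the wheel cocycles with $2k+1$ spokes) and then combine F. Brown's freeness theorem with weight-by-weight upper bounds on $H^0(\GC)$ coming from the spectral sequence of the loop-order filtration; reconciling the two sides of this dimensional count, and ruling out exotic classes on the graph side, is where the essential difficulty lies.
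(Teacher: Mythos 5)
First, note that the paper does not prove this statement at all: it is quoted verbatim as Theorem~1.1 of \cite{Willwacher2014} and used as a black box, so there is no internal proof to compare your attempt against. Judged on its own terms, your proposal correctly reconstructs the architecture of Willwacher's argument --- realize both sides inside the deformation complex of (a model of) the chains of the little disks operad, with Tamarkin formality and the associator torsor mediating the $\grt_1$ side --- but the two steps you yourself flag as ``the main obstacle'' are exactly where the entire content of the theorem lives, and the routes you propose for them would not close the gap.

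For injectivity, testing graph cocycles on free Gerstenhaber algebras is circular: a degree-zero cocycle of $\GC$ gives a homotopy derivation, and detecting that it is not homotopically trivial is precisely the statement that $H^0(\GC)\to H^0(\mathrm{Def})$ is injective; evaluating on algebras only sees the image in that same cohomology. Willwacher instead shows that $\GC$ (up to one extra class) computes the relevant deformation complex of $\mathsf{hoe}_2\to\Graphs$ by a direct filtration and spectral-sequence computation on the hairy graph complex. For surjectivity, the route via wheel cocycles, Brown's freeness theorem and a weight-by-weight dimension count cannot work: the claim that the wheel classes $w_{2k+1}$ and their brackets exhaust $H^0(\GC)$ is, given the theorem, equivalent to the open generation conjecture for $\grt_1$, and no closed-form upper bound for $\dim H^0(\GC)$ in fixed weight is available from the loop-order spectral sequence --- indeed the weight-graded dimensions of $\grt_1$ itself are not known unconditionally, so there is nothing to reconcile the count against. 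The actual surjectivity argument is non-constructive: Drinfeld associators exist, they correspond to formality morphisms of the little disks operad, the set of such morphisms is simultaneously a torsor under $\exp(\grt_1)$ and under the prounipotent group built from degree-zero graph cocycles, and comparing the two torsor structures forces the map to be onto. Without that mechanism, or an equivalent replacement, your sketch does not amount to a proof.
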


The aim of this section is to prove the following statement.

\begin{theorem}\label{thm:compatible}
T. Willwacher's isomorphism $\omega$ induces an isomorphism,
\begin{equation*}
\mf^2H^0(\GC)/\mf^3H^0(\GC) \longrightarrow \mf^2\grt_1/\mf^3\grt_1.
\end{equation*}
\end{theorem}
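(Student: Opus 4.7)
The plan is to prove the theorem by factoring the induced map through the isomorphism of Theorem \ref{Theorem:Main}. Specifically, I will construct an isomorphism
$$\Psi: \mf^2 H^0(\GC) / \mf^3 H^0(\GC) \overset{\cong}{\longrightarrow} H^1(C, d_0) = E_1^{2,-1}(1),$$
and then verify that $\Phi \circ \bar{\omega} = \Psi$, where $\Phi$ is the isomorphism of Theorem \ref{Theorem:Main} and $\bar{\omega}$ is the map induced by Willwacher's $\omega$ on the associated graded. Bijectivity of $\Psi$ combined with this compatibility will immediately yield that $\bar{\omega}$ is an isomorphism.

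To define $\Psi$ at the chain level, I introduce a ``marking'' map $\mu: \GC \to \icg(1)$ which, on a graph $\Gamma$, is the sum over vertices $v$ of the graph obtained by declaring $v$ to be the unique external vertex of a graph in $\icg(1)$. Restricting to the quasi-isomorphic subcomplex $\GC^{1-vi}$ of 1-vertex irreducible graphs (Remark \ref{remark:irreducible}) guarantees each summand is internally connected. A straightforward calculation of the first Betti number shows that, in degree zero, a $\GC$-graph with $n$ vertices has $k = 2n - 2$ edges, so that marking a vertex of valence $m_v$ produces an $\icg(1)$-graph with exactly $n - m_v$ internal loops. Hence, on $\mf^p \GC$, markings at vertices of maximal valence yield graphs of loop count exactly $p$, while markings at lower-valence vertices yield loop count strictly greater than $p$. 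In particular, $\mu$ descends to a well-defined chain map $\bar{\mu}: \mf^2 \GC / \mf^3 \GC \to \mf^2 \icg(1)/\mf^3 \icg(1)$, which I then further project to the two-loop subcomplex $(C, d_0)$ via the quasi-isomorphism of \cite{Turchin2014}. A direct comparison of the vertex-splitting differentials on both sides shows that $\bar{\mu}$ is indeed a chain map, and passage to cohomology produces the desired $\Psi$.

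Bijectivity of $\Psi$ proceeds as follows. Surjectivity will be established by constructing explicit lifts: each generator $\theta_{2i, 2j}$ from Remark \ref{rmk:generate} can be lifted to a graph in $\mf^2 \GC^{1-vi}$ by collapsing all of its hair together at a single high-valence vertex, and by construction $\bar{\mu}$ sends this lift to a nonzero multiple of $\theta_{2i, 2j}$. Once the compatibility $\Psi = \Phi \circ \bar{\omega}$ is established, injectivity of $\Psi$ follows because $\Phi$ and $\omega$ are isomorphisms. Alternatively, once surjectivity is known, a dimension count using Theorem \ref{Thm:dimensions} in each weight component yields injectivity directly, since by Remark \ref{rmk:relations} and Theorem \ref{Thm:dimensions} the two graded spaces have the same (finite) dimensions in every weight.

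The main obstacle is precisely the compatibility check, which requires two ingredients: first, that $\omega(\mf^p \GC) \subset \mf^p \grt_1$, so that $\bar{\omega}$ is well-defined on the quotients; and second, the identity $\Psi = \Phi \circ \bar{\omega}$ on the $\mf^2/\mf^3$ part. Willwacher's isomorphism $\omega$ is constructed via chains on Kontsevich's configuration space operad and a passage through $\tder_n$, and tracing the depth filtration through this composition is the technical heart of the argument. The key claim to verify is that, modulo depth three, the only surviving contributions to $\omega(\Gamma)$ for $\Gamma \in \mf^2 \GC$ correspond to sending the maximal-valence vertex of $\Gamma$ to the external puncture, reproducing precisely the marking construction used to define $\Psi$; all other contributions produce Lie words containing strictly more than two $y$'s and hence vanish in $\mf^2 \grt_1/\mf^3\grt_1$. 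Once this compatibility is secured, the theorem follows immediately from bijectivity of $\Psi$ together with Theorem \ref{Theorem:Main}.
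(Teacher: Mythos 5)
Your first half—defining $\Psi$ by marking a vertex as external, checking via the Betti-number count that it descends to $\mf^2H^0(\GC)/\mf^3H^0(\GC)\to E_1^{2,-1}(1)$, and proving surjectivity by exhibiting preimages of the $\theta_{2i,2j}$—is exactly the paper's Lemma \ref{lemma:onemap}. The gap is in everything after that. Your first route to injectivity of $\Psi$ is circular: you propose to deduce it from $\Psi=\Phi\circ\bar\omega$ with ``$\Phi$ and $\omega$ isomorphisms,'' but an isomorphism of filtered Lie algebras does not automatically induce injections on filtration quotients (that would require $\omega^{-1}(\mf^3\grt_1)\cap\mf^2H^0(\GC)=\mf^3H^0(\GC)$, which is precisely the content of the theorem). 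Your second route, a dimension count, also fails as stated: Theorem \ref{Thm:dimensions} and Remark \ref{rmk:relations} give $\dim[E_1^{2,-1}(1)]_{k-1}=\dim[\mf^2\grt_1/\mf^3\grt_1]_k$, but neither provides an upper bound on $\dim[\mf^2H^0(\GC)/\mf^3H^0(\GC)]$, which is what a surjection $\Psi$ would need in order to be forced injective. Finally, you correctly identify the compatibility $\Psi=\Phi\circ\bar\omega$ as the technical heart, but you only describe what would have to be verified (tracing the depth filtration through Willwacher's construction via configuration-space chains and $\tder_n$); you do not carry it out, and this is by far the hardest step of the route you chose.

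The paper avoids both problems with a sandwich argument that you are missing. It first shows that the classes $\overline{[s_{2i+1},s_{2j+1}]}$ of brackets of the wheel classes span $\mf^2H^0(\GC)/\mf^3H^0(\GC)$: the composite
\begin{equation*}
\bigoplus\limits_{i,j\geq 1}\mathbb{K}\cdot[s_{2i+1},s_{2j+1}]/\ker(\pi\circ i)\hookrightarrow \mf^2H^0(\GC)/\mf^3H^0(\GC)\twoheadrightarrow E_1^{2,-1}(1)
\end{equation*}
is shown to be an isomorphism $\Omega$ sending $\overline{[s_{2i+1},s_{2j+1}]}\mapsto\theta_{2i,2j}$ (surjectivity via the relation $d_0E_{2i,2j}=D_{2i+1,2j+1}+4\theta_{2i,2j}$; injectivity by lifting a coboundary in $\icg(1)$ back to a coboundary in $\GC$ modulo $\mf^3$). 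This forces both maps in the composite to be isomorphisms, giving injectivity of $\Psi$ and, crucially, the spanning statement. Once the quotient is spanned by bracket classes, the induced map $\bar\omega$ is determined without any analysis of Willwacher's construction: $\omega$ is a Lie algebra map with $\omega(s_{2k+1})=\sigma_{2k+1}$, so $\omega([s_{2i+1},s_{2j+1}])=\{\sigma_{2i+1},\sigma_{2j+1}\}$, and the compatibility with $\Phi$ is immediate on the spanning set. You would need to either reproduce this lifting argument or genuinely execute the depth-tracing computation you sketch; as written, the proposal does not close.
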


As a consequence, we obtain identifications,
\begin{equation*}
\mf^2H^0(\GC)\mf^3H^0(\GC) \cong \mf^2\grt_1\mf^3\grt_1 \cong E_1^{2,-1}(1).
\end{equation*}
Conjecturally, $\omega$ should induce an isomorphism on the whole associated graded. Moreover, the hope is that the degree $p$ part of the associated graded may be identified with $E_1^{p,-p+1}(1)$, that is, the cohomology of $p$-loop graphs in $\icg(1)$ of degree $1$. This would yield the following isomorphisms,
\begin{equation*}
\mf^pH^0(\GC)/\mf^{p+1}H^0(\GC) \cong E_1^{p,-p+1}(1) \cong \mf^p\grt_1/\mf^{p+1}\grt_1,
\end{equation*}
which in turn would allow us to study the Grothendieck-Teichm\"uller Lie algebra's relations in higher depth by computing the cohomology of the space of higher loop order graphs in $\icg(1)$.

To start, recall from (\cite{Willwacher2014}, Proposition 9.1.) that all representatives of the cohomology classes $s_{2k+1}$ in $H^0(\GC)$ corresponding to the conjectural generators $\sigma_{2k+1}$ under the isomorphism $\omega$ have non-vanishing coefficient in front of the wheel graph $w_{2k+1}$ with $2k+1$ spokes (see Figure \ref{figure:wheel graphs}).

\begin{figure}[h]
\centering
\begin{tikzpicture}

\node at (-1.5,0) {$w_3=$};

\draw(-0.5,-0.5) -- (0,0);
\draw(0.5,-0.5) -- (0,0);
\draw(0,0.71) -- (0,0);
\draw(-0.5,-0.5) -- (0,0.71);
\draw(0.5,-0.5) -- (0,0.71);
\draw(-0.5,-0.5) -- (0.5,-0.5);

\draw[black,fill=black](0,0) circle (0.075cm);
\draw[black,fill=black](-0.5,-0.5) circle (0.075cm);
\draw[black,fill=black](0.5,.-0.5) circle (0.075cm);
\draw[black,fill=black](0,0.71) circle (0.075cm);

\node at (-1.5+4,0) {$w_5=$};

\draw(-0.5+4,-0.5) -- (0+4,0);
\draw(0.5+4,-0.5) -- (0+4,0);
\draw(0+4,0.71) -- (0+4,0);
\draw(-0.71+4,0.25) -- (0+4,0.71);
\draw(4.71,0.25) -- (0+4,0.71);
\draw(-0.5,-0.5) -- (0.5,-0.5);
\draw(-0.71+4,0.25) -- (-0.5+4,-0.5);
\draw(-0.71+4,0.25) -- (4,0);
\draw(4.71,0.25) -- (4,0);
\draw(3.5,-0.5) -- (4.5,-0.5);
\draw(4.71,0.25) -- (4.5,-0.5);

\draw[black,fill=black](0+4,0) circle (0.075cm);
\draw[black,fill=black](-0.5+4,-0.5) circle (0.075cm);
\draw[black,fill=black](0.5+4,.-0.5) circle (0.075cm);
\draw[black,fill=black](0+4,0.71) circle (0.075cm);
\draw[black,fill=black](-0.71+4,0.25) circle (0.075cm);
\draw[black,fill=black](0.71+4,0.25) circle (0.075cm);

\end{tikzpicture}
\caption{The wheel graphs $w_3$ and $w_5$.}\label{figure:wheel graphs}

\end{figure}
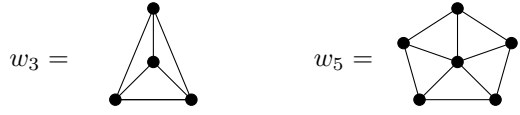

\begin{lemma}
The wheel graph $w_{2k+1}$ satisfies $\#\text{vertices}-\text{maximal valence}=1$ and we may write $s_{2k+1}=w_{2k+1}+R_{2k+1}$, where $R_{2k+1}\in \mf^2H^0(\GC)$.
\end{lemma}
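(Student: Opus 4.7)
The plan is to handle the two assertions separately: the first is a direct combinatorial count, and the second combines the normalization statement from \cite{Willwacher2014} with a short classification of graphs at the same filtration level as the wheel. For the first assertion, the wheel $w_{2k+1}$ has one central vertex connected to each of $2k+1$ outer vertices (which themselves form a cycle), giving $2k+2$ vertices in total; the central vertex has valence $2k+1$ while every outer vertex is trivalent, so the maximal valence equals $2k+1$ and $\#\text{vertices} - \text{maximal valence} = 1$.

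For the decomposition $s_{2k+1} = w_{2k+1} + R_{2k+1}$ I would start from Proposition 9.1 of \cite{Willwacher2014}: any cocycle representative of $s_{2k+1}$ has nonzero coefficient in front of $w_{2k+1}$, which may be rescaled to $1$. Since $\omega$ is compatible with the weight grading and $s_{2k+1}$ sits in weight $2k+1$, the corresponding loop order on the graph side is $2k+1$; together with the degree-zero relation $\#\text{edges} = 2\#\text{vertices} - 2$, this forces every graph $\Gamma$ appearing in the representative to have exactly $2k+2$ vertices. The level-one condition $\#\text{vertices} - \text{maximal valence} = 1$ therefore reduces to requiring that $\Gamma$ has a distinguished vertex of valence $2k+1$ adjacent to every other vertex. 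A short edge count combined with the trivalence requirement then forces the remaining $2k+1$ non-spoke edges to form a $2$-regular simple subgraph on the outer vertices, i.e.\ a disjoint union of simple cycles of length $\geq 3$ whose lengths partition $2k+1$. The single-cycle partition recovers $w_{2k+1}$, while every other partition produces a ``multi-cycle'' graph sharing the same filtration level.

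The main obstacle is to remove these multi-cycle contributions from the representative by subtracting coboundaries. The cleanest approach I see is to study the associated graded $\mf^1\GC / \mf^2\GC$ and argue that its cohomology in loop order $2k+1$ and degree $0$ is one-dimensional, with class represented by $w_{2k+1}$. Concretely, for each non-trivial partition of $2k+1$ into parts $\geq 3$ one would produce an explicit degree $-1$ cochain whose boundary hits the corresponding multi-cycle graph modulo $\mf^2 \GC$ and modulo multiples of the wheel. The natural candidates are ``almost-wheel'' graphs carrying a single outer vertex of valence four joining two disjoint cycles, whose vertex-splitting merges the two cycles into a longer one. Granted this combinatorial input, the normalized wheel coefficient of $1$ forces $s_{2k+1} \equiv w_{2k+1} \pmod{\mf^2\GC}$ at the level of cocycles, and hence $R_{2k+1} \in \mf^2 H^0(\GC)$.
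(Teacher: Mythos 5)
Your computation of $\#\text{vertices}-\text{maximal valence}=1$ for $w_{2k+1}$ and your classification of the degree-zero graphs at filtration level one are correct and essentially coincide with the paper's argument: the degree-zero condition $\#\text{edges}=2\#\text{vertices}-2$ together with the trivalence requirement forces the non-spoke edges to form a $2$-regular simple subgraph on the outer vertices, i.e.\ a disjoint union of cycles of length $\geq 3$.

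The genuine gap is in your final step. You correctly identify that besides the wheel there are ``multi-cycle'' graphs at the same filtration level, but you then propose to eliminate them by computing $H^0$ of the associated graded $\mf^1\GC/\mf^2\GC$ in each loop order and exhibiting explicit coboundaries from ``almost-wheel'' cochains --- and you leave exactly this input unproved (``Granted this combinatorial input\dots''). Since this is the entire content of the claim $R_{2k+1}\in\mf^2H^0(\GC)$, the argument as written is incomplete, and it is not obvious that the proposed coboundaries behave as claimed (splitting the $4$-valent joining vertex, and also the hub, produces several terms whose cancellation pattern you would have to control). The paper avoids all of this: by Remark \ref{remark:irreducible} one may work in the quasi-isomorphic subcomplex $\GC^{1-vi}$ of $1$-vertex irreducible graphs, and every multi-cycle graph fails $1$-vertex irreducibility outright --- deleting the hub vertex disconnects the two or more outer cycles from one another. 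Hence in $\GC^{1-vi}$ the wheel is the \emph{only} admissible graph at filtration level one in each odd loop order, and the normalization of the wheel coefficient to $1$ immediately gives $s_{2k+1}=w_{2k+1}+R_{2k+1}$ with $R_{2k+1}\in\mf^2H^0(\GC)$. You should either invoke $1$-vertex irreducibility at this point or actually carry out the cohomology computation you sketch; as it stands the key step is asserted rather than proved.
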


\begin{proof}
The first part of the statement is easily checked. For the second part, let $\gamma$ be any graph representing a class in $H^0(\GC)$ with $\#\text{vertices}=m_\gamma+1$, where $m_\gamma:=\text{maximal valence of }\gamma$. This implies that there must be one vertex which is connected to all other vertices. As $\deg\gamma=-2-\#\text{edges}+2\#\text{vertices}=0$, we have $\#\text{edges}=2m_\gamma$ and since we already fixed a total of $m_\gamma$ edges (emanating from the vertex of maximal valence), we need to decide how to place the other $m_\gamma$ edges. It turns out that in order to respect 1-vertex irreducibility and the condition that all vertices must be of valence at least three, the remaining edges must be placed as to form such a wheel graph.
\end{proof}

Consider the space of brackets of the elements $s_{2k+1}$ in $H^0(\GC)$ corresponding to the conjectural generators $\sigma_{2k+1}$ of $\grt_1$, that is,
\begin{equation*}
\bigoplus\limits_{i, j \geq 1}\mathbb{K}\cdot [s_{2i+1},s_{2j+1}]\subset \mf^2 H^0(\GC)\subset H^0(\GC).
\end{equation*}
Since the filtration on $H^0(\GC)$ is compatible with the Lie algebra structure, the space of brackets is a subspace of $\mf^2 H^0(\GC)$. Moreover, also by compatibility, the bracket descends to a map,
\begin{align*}
\left(\mf^1 H^0(\GC)/ \mf^2 {H^0(\GC)}\right)^{\otimes 2} &\longrightarrow  \mf^2 H^0(\GC)/\mf^3 H^0(\GC)\\
\overline{s}_{2i+1}\otimes \overline{s}_{2j+1} &\longmapsto [\overline{s}_{2i+1}, \overline{s}_{2j+1}]=[\overline{w}_{2i+1},\overline{w}_{2j+1}]=\overline{[w_{2i+1},w_{2j+1}]}.
\end{align*}
In this quotient, the element $[w_{2i+1},w_{2j+1}]$ is represented by the difference of ``bowtie"-graphs as in Figure \ref{figure:hantel}.

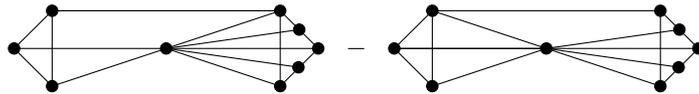
\begin{figure}[ht]
\centering
\begin{tikzpicture}

\draw[black,fill=black](0,0) circle (0.075cm);
\draw[black,fill=black](0.5,0.5) circle (0.075cm);
\draw[black,fill=black](0.5,-0.5) circle (0.075cm);
\draw[black,fill=black](2,0) circle (0.075cm);

\draw[black,fill=black](4,0) circle (0.075cm);
\draw[black,fill=black](3.75,0.25) circle (0.075cm);
\draw[black,fill=black](3.5,0.5) circle (0.075cm);
\draw[black,fill=black](3.74,-0.25) circle (0.075cm);
\draw[black,fill=black](3.5,-0.5) circle (0.075cm);

\draw(0,0) -- (0.5,0.5);
\draw(0,0) -- (0.5,-0.5);
\draw(0,0) -- (2,0);
\draw(0.5,-0.5) -- (0.5,0.5);
\draw(0.5,-0.5) -- (2,0);

\draw(0.5,0.5) -- (3.5,0.5);

\draw(4,0) -- (3.5,0.5);
\draw(4,0) -- (3.5,-0.5);
\draw(3.5,0.5) -- (3.5,-0.5);
\draw(4,0) -- (2,0);
\draw(3.75,0.25) -- (2,0);
\draw(3.5,0.5) -- (2,0);
\draw(3.75,-0.25) -- (2,0);
\draw(3.5,-0.5) -- (2,0);

\node at (4.5,0) {$-$};

\draw[black,fill=black](5,0) circle (0.075cm);
\draw[black,fill=black](5.5,0.5) circle (0.075cm);
\draw[black,fill=black](5.5,-0.5) circle (0.075cm);
\draw[black,fill=black](7,0) circle (0.075cm);

\draw[black,fill=black](9,0) circle (0.075cm);
\draw[black,fill=black](8.75,0.25) circle (0.075cm);
\draw[black,fill=black](8.5,0.5) circle (0.075cm);
\draw[black,fill=black](8.74,-0.25) circle (0.075cm);
\draw[black,fill=black](8.5,-0.5) circle (0.075cm);

\draw(5,0) -- (5.5,0.5);
\draw(5,0) -- (5.5,-0.5);
\draw(5,0) -- (7,0);
\draw(5.5,-0.5) -- (5.5,0.5);
\draw(5.5,0.5) -- (7,0);
\draw(5.5,-0.5) -- (7,0);

\draw(0.5,0.5) -- (3.5,0.5);

\draw(9,0) -- (8.5,0.5);
\draw(9,0) -- (8.5,-0.5);
\draw(8.5,0.5) -- (8.5,-0.5);
\draw(9,0) -- (5,0);
\draw(8.75,0.25) -- (7,0);
\draw(8.5,0.5) -- (5.5,0.5);
\draw(8.75,-0.25) -- (7,0);
\draw(8.5,-0.5) -- (7,0);

\end{tikzpicture}
\caption{Modulo $\mf^3H^0(\GC)$, $[w_3,w_5]$ is given by a nonzero multiple of graphs as depicted above. Individually, we shall refer to such a graphs as ``bowtie"-graphs.}\label{figure:hantel}
\end{figure}

It is obtained by first inserting $w_{2j+1}$ into the highest valent vertex of $w_{2i+1}$ and connecting all but one of the edges (to preserve 1-vertex irreducibility) to the highest valent vertex of $w_{2j+1}$. The second graph is constructed by the same procedure with the roles of $w_{2i+1}$ and $w_{2j+1}$ reversed. One can check that all other graphs produced in $[w_{2i+1},w_{2j+1}]$ lie in $\mf^3H^0(\GC)$. Next consider the composition of the inclusion with the quotient map,
\begin{equation*}
\bigoplus\limits_{i, j \geq 1}\mathbb{K}\cdot [s_{2i+1},s_{2j+1}]\overset{i}{\hookrightarrow} \mf^2H^0(\GC)\overset{\pi}{\twoheadrightarrow} \mf^2 H^0(\GC)/\mf^3 H^0(\GC).
\end{equation*}
It induces an injective map,
\begin{equation*}
\bigoplus\limits_{i, j \geq 1}\mathbb{K}\cdot [s_{2i+1},s_{2j+1}]/\ker(\pi\circ i)\hookrightarrow \mf^2 H^0(\GC)/\mf^3 H^0(\GC).
\end{equation*}
For the next step, we need the following technical tool.

\begin{proposition}[\cite{Willwacher2014}, Section 6.4.1.]
Let 
\begin{align*}
(-)_1:\GC&\rightarrow \graphs(1)\\
\Gamma&\mapsto \Gamma_1,
\end{align*}
where $\Gamma_1$ is obtained by summing over all ways of marking one vertex of $\Gamma$ as external. This map satisfies,
\begin{equation*}
d\Gamma_1-(d\Gamma)_1=(\underset{1}{\circ} \text{---} \underset{2}{\circ}) \circ_2 \Gamma,
\end{equation*}
where the right hand side means that we insert $\Gamma$ in vertex number two and some over all possible ways of reconnecting the edge attached to vertex one to vertices of $\Gamma$.
\end{proposition}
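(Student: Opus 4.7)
The plan is to unpack both $d\Gamma_1$ and $(d\Gamma)_1$ as sums indexed by pairs $(v, w)$, where $v \in V(\Gamma)$ is the vertex designated external and $w$ is the vertex being split, and to compare them term by term. In $d\Gamma_1$, we first mark some $v \in V(\Gamma)$ as external and then apply the $\graphs(1)$-differential: this either splits the external vertex $v$ (producing an external vertex connected by a new edge to a new internal vertex) or splits some internal $w \neq v$ into two internal vertices. In $(d\Gamma)_1$, we first split some $u \in V(\Gamma)$ in $\GC$ into two internal vertices $u_1, u_2$ joined by a new edge, and then mark one vertex of the resulting graph as external.

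Next I would match contributions. The terms of $d\Gamma_1$ in which we split an internal $w \neq v$ match exactly the terms of $(d\Gamma)_1$ in which the marked external vertex is not one of $u_1, u_2$ (identify $u=w$). The terms in which we split the external $v$ in $d\Gamma_1$ correspond to terms of $(d\Gamma)_1$ in which the marked vertex is one of $u_1, u_2$, with one caveat: in $\graphs$, when an external vertex is split, only the new internal vertex must be at least trivalent, while in $\GC$ both halves of a split must be trivalent. Consequently $d\Gamma_1 - (d\Gamma)_1$ consists precisely of those ``extra'' terms of $d\Gamma_1$ in which the external half of the split of $v$ has valence $\le 2$, i.e., inherits either $0$ or $1$ of the edges originally incident to $v$ (besides the new edge).

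The terms in which the external half has valence $2$ cancel in pairs by a sign argument. Each such term corresponds to choosing an edge $e = (v, u)$ of $\Gamma$ and subdividing it by placing a new external vertex between $v$ and $u$. The same underlying graph is produced whether one marks $v$ external and splits it, moving $e$ to the external side, or marks $u$ external and splits it in the same way. In the two cases, the ``new edge'' placed last in the edge order is the half of $e$ near $v$ versus the half near $u$, respectively. Swapping these two edges in the ordering introduces a sign $-1$, so the two contributions cancel. Since $\Gamma$ has no simple loops, the two endpoints of $e$ are distinct and this pairing is unambiguous.

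What remains are the terms with external half of valence $1$: the new external vertex is attached only to the internal vertex which inherits all of $v$'s original edges, via the single new edge. Summed over $v \in V(\Gamma)$, these are exactly the graphs appearing in $(\underset{1}{\circ} \text{---} \underset{2}{\circ}) \circ_2 \Gamma$, which by definition is the sum over all ways of reconnecting the loose edge at vertex $1$ of $\underset{1}{\circ} \text{---} \underset{2}{\circ}$ to the vertices of $\Gamma$. The main technical obstacle I anticipate is the sign cancellation in the valence-$2$ case, which requires carefully tracking the edge ordering convention and keeping track of which edge is declared ``new'' by each split.
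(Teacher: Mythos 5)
Your argument is correct in substance, but note that the paper itself gives no proof of this proposition: it is imported verbatim from Willwacher's paper (Section 6.4.1), so there is nothing internal to compare against. Your decomposition into pairs (marked vertex, split vertex), the observation that the only unmatched terms are the external splittings in which the external half retains $0$ or $1$ of the old edges, the sign cancellation of the valence-$2$ terms via the transposition exchanging the old edge with the newly created one, and the identification of the valence-$1$ leftovers with $(\underset{1}{\circ}\text{---}\underset{2}{\circ})\circ_2\Gamma$ is exactly the standard verification. The one point you should still pin down explicitly is the edge-ordering convention on the right-hand side: your leftover terms carry the new edge \emph{last}, whereas the paper's operadic composition convention places the edges of the left factor \emph{first}, so the two descriptions differ by a cyclic permutation of the $k+1$ edges, i.e.\ a sign $(-1)^{k}$ with $k$ the number of edges of $\Gamma$; this is harmless in the degree-zero applications (where $k=2N-2$ is even) but should be stated if the identity is claimed for all of $\GC$.
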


\begin{lemma}\label{lemma:onemap}
The map $(-)_1$ induces a surjective map,
\begin{equation*}
\mf^2H^0(\GC)/\mf^3H^0(\GC)\longrightarrow H^1( \mf^2\icg(1)/\mf^3\icg(1),d_0)=E_1^{2,-1}(1).
\end{equation*}
\end{lemma}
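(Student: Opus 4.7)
The plan is to verify three things in order: (i) the map $(-)_1$ carries $\mf^p\GC$ into $\mf^p\icg(1)$ and sends a degree-zero class of $\GC$ to a degree-one element of $\icg(1)$; (ii) the stated identity makes $(-)_1$ into a $d_0$-chain map on the associated graded; (iii) applied to the brackets $[w_{2i+1},w_{2j+1}]\in\mf^2H^0(\GC)$ it hits, modulo $\mf^3\icg(1)$ and $d_0$-boundaries, nonzero multiples of the theta graphs $\theta_{2i,2j}$, which by Remark~\ref{rmk:generate} generate $E_1^{2,-1}(1)$.

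For (i), by Remark~\ref{remark:irreducible} one may work with $1$-vertex irreducible representatives. If $\Gamma$ has $N$ vertices and is a $\GC$-cocycle of degree $0$, a direct count gives $\#\text{loops}(\Gamma)=N-1$, and if $\Gamma\in\mf^p\GC$ then every vertex satisfies $d_v\le m_\Gamma\le N-p$. Marking vertex $v$ as external yields a graph $\Gamma_v\in\icg(1)$ of degree $1$ with $N-d_v\ge p$ internal loops, so $\Gamma_1=\sum_v\Gamma_v\in\mf^p\icg(1)$.

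For (ii), if $d\Gamma=0$ the identity from the preceding proposition reads
\[
d\Gamma_1=(\underset{1}{\circ}\text{---}\underset{2}{\circ})\circ_2\Gamma,
\]
a sum of graphs obtained by attaching a single external vertex to $\Gamma$ by a pendant edge. Each such graph has internal loop count equal to $N-1\ge m_\Gamma+p-1\ge p+2$, where we used the three-valency condition $m_\Gamma\ge 3$. Therefore $d\Gamma_1\in\mf^{p+2}\icg(1)$, so $\Gamma_1$ is $d_0$-closed in $\mf^p\icg(1)/\mf^{p+1}\icg(1)$. The analogous computation applied to $\Gamma=d\Gamma'$ for $\Gamma'\in\mf^p\GC$ of degree $-1$ shows that coboundaries map to $d_0$-coboundaries modulo $\mf^{p+1}\icg(1)$, so the construction descends to a well-defined map $\mf^pH^0(\GC)/\mf^{p+1}H^0(\GC)\to E_1^{p,-p+1}(1)$.

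For surjectivity when $p=2$, I would compute $(-)_1$ on the brackets $[w_{2i+1},w_{2j+1}]$. Modulo $\mf^3\GC$ this bracket is carried by a ``bowtie'' graph in which the two wheels are joined through a single vertex of high valence $2i+2j+1$ (the strictly top-valence insertion terms of valence $2i+2j+2$ cancel by antisymmetry), with one spoke of each wheel redirected to an outer vertex of the other so as to form one bridge edge between the two outer cycles. Among the terms of $\sum_v (\text{bowtie})_v$, only the marking of the waist vertex yields a two-loop graph; every other marking produces at least three internal loops and is killed in $\mf^2\icg(1)/\mf^3\icg(1)$. The surviving term has internal topology of two disjoint cycles of lengths $2i+1$ and $2j+1$ joined by a single bridge edge, with $2i+2j+1$ hairs attaching the external vertex to all internal vertices except one bridge endpoint.

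The main obstacle is to identify this ``dumbbell with hairs'' class with a nonzero multiple of $\theta_{2i,2j}$ in $H^1(\mf^2\icg(1)/\mf^3\icg(1),d_0)$. I would do so using the Turchin quasi-isomorphism of $(C,d_0)$ into $(\mf^2\icg(1)/\mf^3\icg(1),d_0)$: one exhibits an explicit $d_0$-cochain whose boundary equals, up to a nonzero scalar, the difference between the dumbbell representative and a theta-graph representative in $C_1$, after which the polynomial model of Remark~\ref{rmk:generate} reads off a nonzero multiple of $x^{2i}y^{2j}\in A/B$. Granted this identification, every $\theta_{2i,2j}$ lies in the image of $(-)_1$, and since they generate $E_1^{2,-1}(1)$ the induced map is surjective.
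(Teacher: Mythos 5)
Your argument is correct in outline and lands where the paper does, but it is organized differently in two respects, both worth noting. For well-definedness you prove the stronger and cleaner statement that $(-)_1$ carries $\mf^p\GC$ into $\mf^p\icg(1)$ for every $p$, via the identity that marking a vertex $v$ of a $1$-vertex irreducible degree-zero graph produces $N-\val(v)$ internal loops, and you dispose of the chain-map defect by the uniform bound $N-1\geq m_\Gamma+p-1\geq p+2$; the paper works only with $p=2$ and instead explicitly classifies the shapes of degree-zero graphs representing nonzero classes in $\mf^2H^0(\GC)/\mf^3H^0(\GC)$ (the bowtie and theta-type graphs of Figure \ref{figure:nonzero}), observing that only the marking of the maximal-valence vertex survives in the two-loop quotient. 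Your version is more systematic and is exactly the form one would want for the conjectural generalization to higher $p$ discussed after Theorem \ref{thm:compatible}. For surjectivity the routes genuinely diverge: the paper marks the top vertex of a graph whose low-valence part is already a theta shape and asserts that the $\theta_{2i,2j}$ are hit (leaving implicit that such graphs underlie honest classes in $\mf^2H^0(\GC)$), whereas you go through the brackets $[w_{2i+1},w_{2j+1}]$ --- which are genuine cohomology classes in $\mf^2H^0(\GC)$ --- obtain the hairy dumbbell $D_{2i+1,2j+1}$, and then must identify it with a nonzero multiple of $\theta_{2i,2j}$. That identification is the one step you leave as a promissory note, and it is not automatic, since the dumbbell does not lie in the Turchin subcomplex $(C,d_0)$; the paper supplies exactly the cochain you need in the proof of Proposition \ref{prop:itsaniso}, namely the figure-eight graph $E_{2i,2j}$ with $d_0E_{2i,2j}=D_{2i+1,2j+1}+4\,\theta_{2i,2j}$. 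With that homotopy written down your proof is complete, and it has the merit of making the surjectivity argument rest on classes that are visibly in the source of the map.
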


\begin{proof}
Let us first check that the map is well-defined. For this, consider $\Gamma\in \GC$. By Remark \ref{remark:irreducible}, we may assume that it is 1-vertex irreducible. This ensures that $\Gamma_1$ lies in $\icg(1)$. Next, project $\Gamma_1$ onto its two-loop part via the quotient map,
\begin{equation*}
\pi:\icg(1)\rightarrow \mf^2\icg(1)/\mf^3\icg(1).
\end{equation*}
Note that there are no two-loop graphs of weight one in $\icg(1)$ (i.e. with only one edge adjacent to the unique external vertex). Thus, the composition $\pi\circ (-)_1$ is a chain map, that is,
\begin{equation*}
d\pi\Gamma_1=\pi (d\Gamma)_1,
\end{equation*}
and it induces a map on the level of cohomology,
\begin{equation}\label{onemapcohom}
H^0(\GC)\rightarrow H^1( \mf^2\icg(1)/\mf^3\icg(1),d_0)=E_1^{2,-1}(1).
\end{equation}
Let now $\Gamma\in \mf^2H^0(\GC)$ be such that it represents a non-zero equivalence class in the quotient 
\begin{equation*}
\mf^2 H^0(\GC)/\mf^3 H^0(\GC).
\end{equation*}
If $m$ denotes the vertex of maximal valence of $\Gamma$ and $N$ the number of vertices of $\Gamma$, we have $N-\text{val}(m)\geq 2$. In particular, this means that there is at least one vertex which is not directly connected to $m$. Moreover, the part $\Gamma_2$ of $\Gamma$ which represents the non-zero element in the quotient has exactly one such vertex. Call it $w$, and assume the graph we are considering has $k$ edges. Using the elementary formula,
\begin{equation*}
\sum\limits_{v \text{ vertex of }\Gamma_2}\val(v)=2k,
\end{equation*}
and the fact that $k=2N-2$ (since $\Gamma$ is of degree zero in $\GC$), and that $\text{val}(m)=N-2$, we find,
\begin{equation}\label{eq:valence}
\sum\limits_{v \text{ vertex of } \Gamma_2}\val(v)=\val(m)+\val(w)+\sum\limits_{v\neq w,m}\text{val}(v)=2k=4N-4.
\end{equation}
As all vertices are at least trivalent,
\begin{equation*}
\sum\limits_{v\neq w,m}\text{val}(v)\geq 3(N-2).
\end{equation*}
If there is equality, that is all vertices other than $m$ and $w$ are trivalent, we find $\text{val}(w)=4$. If the inequality is strict, first of all,
\begin{equation*}
\val(m)+\val(w)+3(N-2)<4N-4,
\end{equation*}
and thus,
\begin{equation*}
3\leq \text{val}(w)<N+2-m=4,
\end{equation*}
i.e. $\text{val}(w)=3$. Equation \eqref{eq:valence} then reads,
\begin{equation*}
\val(m)+3+\sum\limits_{v\neq w,m}\text{val}(v)=4N-4,
\end{equation*}
which implies that there is exactly one 4-valent vertex, and with the exception of $m$, the rest is trivalent. From this we deduce, that the only graphs which represent non-zero elements in the quotient above are of the following form. Either the non-maximal valent vertices make up two loops which do not share an edge and which are only connected through one edge, and all except one non-maximal valent vertices are connected to $m$ (i.e. a ``bowtie"-graph), or the non-maximal valent vertices make up two loops which share at least one edge and again all but one non-maximal valent vertices are connected to $m$. Examples of the two cases are depicted in Figure \ref{figure:nonzero}. Applying $(-)_1$ to $\Gamma$, we find that the only part which produces a two-loop graph in $\icg(1)$ is given by the part which is non-zero in the quotient $\mf^2H^0(\GC)/\mf^3H^0(\GC)$, and only when we mark $m$ as external. The map from \eqref{onemapcohom} therefore factors through the projection,
\begin{equation*}
H^0(\GC)\rightarrow \mf^2 H^0(\GC)/\mf^3 H^0(\GC),
\end{equation*}
and thus induces a map
\begin{equation*}
\mf^2H^0(\GC)/\mf^3H^0(\GC)\longrightarrow H^1( \mf^2\icg(1)/\mf^3\icg(1),d_0)=E_1^{2,-1}(1).
\end{equation*}
It is surjective, since (for $i,j\geq 1$) all the $\theta_{2i,2j}$-graphs which generate $E_1^{2,-1}(1)$ lie in the image. They are obtained by marking the maximal valent vertex of a graph similar to the one on the right in Figure \ref{figure:nonzero} as external.

\begin{figure}[ht]
\centering
\begin{tikzpicture}

\draw[black,fill=black](0,0) circle (0.075cm);
\draw[black,fill=black](0.5,0.5) circle (0.075cm);
\draw[black,fill=black](0.5,-0.5) circle (0.075cm);
\draw[black,fill=black](2,0) circle (0.075cm);

\draw[black,fill=black](4,0) circle (0.075cm);
\draw[black,fill=black](3.75,0.25) circle (0.075cm);
\draw[black,fill=black](3.5,0.5) circle (0.075cm);
\draw[black,fill=black](3.74,-0.25) circle (0.075cm);
\draw[black,fill=black](3.5,-0.5) circle (0.075cm);

\draw(0,0) -- (0.5,0.5);
\draw(0,0) -- (0.5,-0.5);
\draw(0,0) -- (2,0);
\draw(0.5,-0.5) -- (0.5,0.5);
\draw(0.5,-0.5) -- (2,0);

\draw(0.5,0.5) -- (3.5,0.5);

\draw(4,0) -- (3.5,0.5);
\draw(4,0) -- (3.5,-0.5);
\draw(3.5,0.5) -- (3.5,-0.5);
\draw(4,0) -- (2,0);
\draw(3.75,0.25) -- (2,0);
\draw(3.5,0.5) -- (2,0);
\draw(3.75,-0.25) -- (2,0);
\draw(3.5,-0.5) -- (2,0);

\draw[black,fill=black](6.5,1.4-1) circle (0.075cm);
\draw[black,fill=black](8.5,1.75-1) circle (0.075cm);
\draw[black,fill=black](6.5,2.1-1) circle (0.075cm);
\draw[black,fill=black](7.5,2.5-1) circle (0.075cm);
\draw[black,fill=black](7.5,1-1) circle (0.075cm);
\draw[black,fill=black](7.5,0-1) circle (0.075cm);

\draw(7.5,0-1) -- (8.5,1.75-1);
\draw(7.5,0-1) -- (7.5,2.5-1);
\draw(7.5,0-1) -- (6.5,1.4-1);
\draw(7.5,0-1) -- (6.5,2.1-1);
\draw(6.5,1.4-1) -- (6.5,2.1-1);
\draw(6.5,2.1-1) -- (7.5,2.5-1);
\draw(7.5,2.5-1) -- (8.5,1.75-1);
\draw(8.5,1.75-1) -- (7.5,1-1);
\draw(6.5,1.4-1) -- (7.5,1-1);

\end{tikzpicture}
\caption{Graphs which represent a non-zero class in the quotient $\mf^2H^0(\GC)/\mf^3H^0(\GC)$.}\label{figure:nonzero}
\end{figure}

\end{proof}

\begin{proposition}\label{prop:itsaniso}
The map from Lemma \ref{lemma:onemap} restricted to the quotient space of brackets modulo $\ker(\pi\circ i)$ is an isomorphism. It can be normalized to map generators to generators, i.e.
\begin{align*}
\Omega:\bigoplus\limits_{i, j \geq 1}\mathbb{K}\cdot [s_{2i+1},s_{2j+1}]/\ker(\pi\circ i)&\rightarrow E_1^{2,-1}(1)\\
\overline{[s_{2i+1},s_{2j+1}]}&\mapsto \theta_{2i,2j}
\end{align*}
\end{proposition}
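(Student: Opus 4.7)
The plan is to compute $\Omega$ explicitly on a canonical bracket representative, identify the output with a nonzero scalar multiple of $\theta_{2i,2j}$, and then deduce bijectivity by comparison with the parallel map on the $\grt_1$ side through Willwacher's Lie algebra isomorphism and Theorem \ref{Theorem:Main}.

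\emph{Reduction to the dumbbell.} Writing $s_{2k+1}=w_{2k+1}+R_{2k+1}$ with $R_{2k+1}\in\mf^2 H^0(\GC)$, the compatibility of the filtration with the bracket forces the cross-terms $[w,R]$, $[R,w]$, $[R,R]$ into $\mf^3 H^0(\GC)$, so $\overline{[s_{2i+1},s_{2j+1}]}=\overline{[w_{2i+1},w_{2j+1}]}$ in $\mf^2 H^0(\GC)/\mf^3 H^0(\GC)$. The pre-Lie expansion shows this class is represented by (a nonzero scalar multiple of) the bowtie $B_{i,j}$ of Figure \ref{figure:hantel}. By the proof of Lemma \ref{lemma:onemap}, only the marking of the unique high-valent vertex of $B_{i,j}$ as external produces a two-loop graph; the result is the ``dumbbell'' $D_{i,j}\in\icg(1)$ consisting of two internal cycles of lengths $2i+1$ and $2j+1$ joined by a single bridge edge.

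\emph{Cohomology class of the dumbbell (main step).} The core technical step is to show $[D_{i,j}]=c_{i,j}\cdot\theta_{2i,2j}\in H^1(C,d_0)$ with $c_{i,j}\neq 0$. To do so, I would contract the bridge of $D_{i,j}$ to obtain an auxiliary ``figure-eight'' graph $F_{i,j}\in\mf^2\icg(1)/\mf^3\icg(1)$ whose two cycles meet at a single $5$-valent internal vertex $w$. Since splittings of the remaining trivalent internal vertices in $\icg$ are trivial, $d_0 F_{i,j}$ reduces to the ten $(2,3)$-splittings at $w$. Grouping these by how they partition the two pairs of cycle edges and the hair edge at $w$ yields: $D_{i,j}$ together with its $S_3$-symmetric companions (identified via Lemma \ref{Lemma:signs}), pure $\theta_{2i,2j}$-type graphs from the mixed splittings, and a few auxiliary non-$\theta$ two-loop pieces handled either by iterating the same contract-and-split trick or by direct computation in the polynomial description of Remark \ref{rmk:action}. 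Tracking signs and coefficients carefully yields the nonzero rational $c_{i,j}$; this combinatorial cohomology identification is the main obstacle of the proof.

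\emph{Bijectivity and normalization.} Let $\Psi:X\xrightarrow{\omega}Y\xrightarrow{\pi\circ i}\mf^2\grt_1/\mf^3\grt_1\xrightarrow{\Phi}E_1^{2,-1}(1)$ be the parallel composition through Willwacher's Lie algebra isomorphism; $\Psi$ sends $[s_{2i+1},s_{2j+1}]\mapsto\theta_{2i,2j}$ and is surjective with $\dim(X/\ker\Psi)_k=\lfloor(k-2)/6\rfloor$ by Theorem \ref{Thm:Ihara}, Theorem \ref{Theorem:Main}, and Theorem \ref{Thm:dimensions}. The previous step writes $\Omega=\Psi\circ D$ for the invertible diagonal operator $D$ on $X$ given by $[s_{2i+1},s_{2j+1}]\mapsto c_{i,j}[s_{2i+1},s_{2j+1}]$, so $\dim(X/\ker\Omega)=\dim(X/\ker\Psi)=\lfloor(k-2)/6\rfloor=\dim E_1^{2,-1}(1)_{k-1}$. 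Surjectivity of $\Omega$ on $X/\ker(\pi\circ i)$ is immediate from Remark \ref{rmk:generate}, and together with the inclusion $\ker(\pi\circ i)\subset\ker\Omega$ the dimension count forces $\ker(\pi\circ i)=\ker\Omega$, hence $\Omega$ is an isomorphism. Finally, rescaling each representative $s_{2k+1}$ within its remaining scaling freedom absorbs the constants $c_{i,j}$ into the definition, producing the normalized formula $\overline{[s_{2i+1},s_{2j+1}]}\mapsto\theta_{2i,2j}$.
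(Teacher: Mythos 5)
Your first two steps reproduce the paper's argument: the reduction $\overline{[s_{2i+1},s_{2j+1}]}=\overline{[w_{2i+1},w_{2j+1}]}$ to a difference of bowtie graphs, the observation that only marking the high-valent vertex as external survives in the two-loop quotient, and the computation of the class of the resulting dumbbell via the figure-eight graph are exactly what the paper does (its $E_{2i,2j}$ satisfies $d_0E_{2i,2j}=D_{2i+1,2j+1}+4\,\theta_{2i,2j}$, the ten splittings of the $5$-valent vertex decomposing into two dumbbell terms, four theta terms, and four figure-eight-type terms that vanish by the symmetry of Lemma \ref{Lemma:signs} --- these last four are your ``auxiliary non-$\theta$ pieces'').

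The gap is in your bijectivity paragraph. The factorization $\Omega=\Psi\circ D$ gives $\dim(X/\ker\Omega)=\dim(X/\ker\Psi)=\lfloor(k-2)/6\rfloor=\dim[E_1^{2,-1}(1)]_{k-1}$, hence $X/\ker\Omega\cong E_1^{2,-1}(1)$; but the proposition concerns $X/\ker(\pi\circ i)$, and the inclusion $\ker(\pi\circ i)\subset\ker\Omega$ points the wrong way. To force equality you would need the upper bound $\dim\bigl(X/\ker(\pi\circ i)\bigr)\le\lfloor(k-2)/6\rfloor$, i.e.\ an a priori bound on the image of the brackets inside $\mf^2H^0(\GC)/\mf^3H^0(\GC)$ --- and at this stage nothing bounds that quotient from above: the map of Lemma \ref{lemma:onemap} is only known to be surjective (its injectivity is the Corollary, which is \emph{deduced} from this proposition), and transporting the bound through $\omega$ would presuppose the filtration compatibility of Theorem \ref{thm:compatible}, which also rests on this proposition. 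What is missing is precisely the paper's injectivity argument: if $\Omega(A)=0$, pick $B$ with $d_0B=\Omega(A)$ in the two-loop part of $\icg(1)$, internalize its external vertex and symmetrize over labelings to get $B'\in\GC$, and verify $dB'=\lambda A+Q_1$ with $\lambda\ne0$ and $Q_1\in\mf^3H^0(\GC)$ (the splittings of the formerly external vertex land in $\mf^3$, the remaining splittings mimic $d_0$); this shows directly that $A\in\ker(\pi\circ i)$. A minor further point: the constant works out to $c_{i,j}=-4$, which is not of the form $t_it_j$ over $\mathbb{Q}$, so it cannot be absorbed by rescaling the individual $s_{2k+1}$; one simply rescales $\Omega$ itself.
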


\begin{proof}
We have already seen that in the quotient $\mf^2H^0(\GC)/\mf^3H^0(\GC)$ brackets $[s_{2i+1},s_{2j+1}]$ are represented by the differences of ``bowtie"-graphs. Applying the map from Lemma \ref{lemma:onemap} to this class, we obtain a difference of two-loop graphs in $\icg(1)$ (see Figure \ref{figure:hantelicg}). Denote it by $D_{2i+1,2j+1}$. It is cohomologous to a multiple of the theta graph $\theta_{2i,2j}$ (see Figure \ref{figure:thetaicg}). 
\begin{figure}[ht]
\centering
\begin{tikzpicture}

\draw[black,fill=black](0,0) circle (0.05cm);
\draw[black,fill=black](0.5,0.5) circle (0.05cm);
\draw[black,fill=black](0.5,-0.5) circle (0.05cm);

\draw(2,0) circle (0.1cm);
\node at (2,-0.5) {$1$};

\draw[black,fill=black](4,0) circle (0.05cm);
\draw[black,fill=black](3.75,0.25) circle (0.05cm);
\draw[black,fill=black](3.5,0.5) circle (0.05cm);
\draw[black,fill=black](3.74,-0.25) circle (0.05cm);
\draw[black,fill=black](3.5,-0.5) circle (0.05cm);

\draw(0,0) -- (0.5,0.5);
\draw(0,0) -- (0.5,-0.5);
\draw(0,0) -- (2,0);
\draw(0.5,-0.5) -- (0.5,0.5);
\draw(0.5,-0.5) -- (2,0);

\draw(0.5,0.5) -- (3.5,0.5);

\draw(4,0) -- (3.5,0.5);
\draw(4,0) -- (3.5,-0.5);
\draw(3.5,0.5) -- (3.5,-0.5);
\draw(4,0) -- (2,0);
\draw(3.75,0.25) -- (2,0);
\draw(3.5,0.5) -- (2,0);
\draw(3.75,-0.25) -- (2,0);
\draw(3.5,-0.5) -- (2,0);

\node at (4.5,0) {$-$};

\draw[black,fill=black](5,0) circle (0.05cm);
\draw[black,fill=black](5.5,0.5) circle (0.05cm);
\draw[black,fill=black](5.5,-0.5) circle (0.05cm);

\draw(7,0) circle (0.1cm);

\node at (7,-0.5) {$1$};

\draw[black,fill=black](9,0) circle (0.05cm);
\draw[black,fill=black](8.75,0.25) circle (0.05cm);
\draw[black,fill=black](8.5,0.5) circle (0.05cm);
\draw[black,fill=black](8.74,-0.25) circle (0.05cm);
\draw[black,fill=black](8.5,-0.5) circle (0.05cm);

\draw(5,0) -- (5.5,0.5);
\draw(5,0) -- (5.5,-0.5);
\draw(5,0) -- (7,0);
\draw(5.5,-0.5) -- (5.5,0.5);
\draw(5.5,0.5) -- (7,0);
\draw(5.5,-0.5) -- (7,0);

\draw(0.5,0.5) -- (3.5,0.5);

\draw(9,0) -- (8.5,0.5);
\draw(9,0) -- (8.5,-0.5);
\draw(8.5,0.5) -- (8.5,-0.5);
\draw(9,0) -- (5,0);
\draw(8.75,0.25) -- (7,0);
\draw(8.5,0.5) -- (5.5,0.5);
\draw(8.75,-0.25) -- (7,0);
\draw(8.5,-0.5) -- (7,0);

\end{tikzpicture}
\caption{The vertex with highest valence is marked as external to obtain an element $D_{3,5}\in \icg(1)$.}\label{figure:hantelicg}
\end{figure}
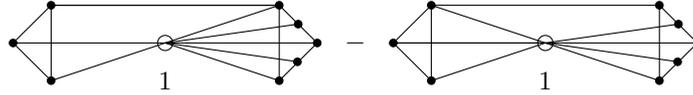

To see this, denote by $E_{2i,2j}\in \icg(1)$ the ``figure-8" two-loop graph of degree zero as depicted in Figure \ref{figure:E}. The differential $d_0$ splits the vertex of valence five in 10 different ways. The four graphs which are still of ``figure-8" type graphs are zero due to symmetry. By ordering the edges in a consistent way, we are left with,
\begin{equation*}
d_0E_{2i,2j}=D_{2i+1,2j+1}+4\cdot \theta_{2i,2j}.
\end{equation*}
Since the theta-graphs $\theta_{2i,2j}$ generate $E_1^{2,-1}(1)$, $\Omega$ is clearly surjective, and we can normalize it such that it satisfies $\Omega(\overline{[s_{2i+1},s_{2j+1}]})=\theta_{2i,2j}$.
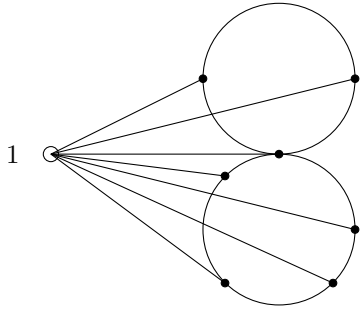
\begin{figure}[ht]
\centering
\begin{tikzpicture}

\draw (0,1) circle (1cm);
\draw (0,-1) circle (1cm);

\draw[black,fill=black](1,1) circle (0.05cm);
\draw[black,fill=black](-1,1) circle (0.05cm);

\draw[black,fill=black](0.71,-1.71) circle (0.05cm);
\draw[black,fill=black](-0.71,-1.71) circle (0.05cm);
\draw[black,fill=black](1,-1) circle (0.05cm);
\draw[black,fill=black](-0.71,-0.29) circle (0.05cm);

\draw[black,fill=black](0,0) circle (0.05cm);

\draw(-3,0) circle (0.1cm);

\draw(-3,0) -- (0,0);

\draw(1,1) -- (-3,0);
\draw(-1,1) -- (-3,0);

\draw(-0.71,-0.29) -- (-3,0);
\draw(1,-1) -- (-3,0);
\draw(0.71,-1.71) -- (-3,0);
\draw(-0.71,-1.71) -- (-3,0);

\node at (-3.5,0) {$1$};

\end{tikzpicture}
\caption{The graph $E_{2,4}$ which is mapped $D_{3,5}+4\cdot \theta_{2,4}$ by $d_0$.}\label{figure:E}
\end{figure}

For the injectivity, assume that for $k\geq 8$, $A:=\sum\limits_{i=1}^{\lfloor\frac{k-4}{2}\rfloor} a_i [s_{2i+1},s_{k-1-2i}]\in \ker(\Omega)$, i.e.
\begin{equation*}
\Omega(A)=\sum\limits_{i=1}^{\lfloor\frac{k-4}{2}\rfloor} a_{i} D_{2i+1,k-1-2i}=0\in E_1^{2,-1}(1).
\end{equation*}
This implies the existence of some $B \in \mf^2\icg(1)/\mf^3\icg(1)$ of degree $0$ such that $\Omega(A)=d_0(B)$. Note that $\Omega(A)$ and $B$ are both of weight $k$, and by degree reasons of all vertices (internal and external) the external vertex is the one of highest valence. Next, consider the graphs in $\GC$ obtained by, first, marking the unique external vertex of $\Omega(A)$ and of $B$ as internal again, and then summing over all ways of labeling the vertices to make them indistinguishable. Call them $A'$ and $B'$, respectively. Note that $A'$ is a scalar multiple of $A$. Apply the differential of $\GC$ on $B'$. We remark that the part (denoted by $Q_1$ in the following) of the differential acting on the vertex that was previously external will lie in $\mf^3 H^0(\GC)$, since it will invariably produce at least two vertices which are not directly connected to the vertex of maximal valence. The remaining part (denoted by $Q_2$) of the differential will only operate on vertices which were previously internal. It mimics the differential $d_0$ on $\icg(1)$, and thus produces a multiple of $A'$, hence a multiple of $A$. To summarize, the reasoning above implies that there is a $\lambda \in \mathbb{K}^\times$ such that $dB'=\lambda A+Q_1$, and thus,
\begin{equation*}
A=\lambda^{-1}dB' \in \mf^2H^0(\GC)/\mf^3H^0(\GC),
\end{equation*}
Equivalently, $A=0\in \mf^2H^0(\GC)/\mf^3H^0(\GC)$, and $\Omega$ is injective.
\end{proof}

\begin{corollary}
The map $(-)_1$ from Lemma \ref{lemma:onemap} is an isomorphism, and we may thus identify the following spaces,
\begin{equation*}
\bigoplus\limits_{i, j \geq 1}\mathbb{K}\cdot [s_{2i+1},s_{2j+1}]/\ker(\pi\circ i)\cong\mf^2H^0(\GC)/\mf^3H^0(\GC)\cong E_1^{2,-1}(1).
\end{equation*}
\end{corollary}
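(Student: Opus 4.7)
The plan is to observe that the injectivity argument already carried out in the proof of Proposition \ref{prop:itsaniso} does not actually use that $A$ is a linear combination of brackets: it works verbatim for any class in $\mf^2H^0(\GC)/\mf^3H^0(\GC)$. Hence it upgrades to a proof that $(-)_1\colon \mf^2H^0(\GC)/\mf^3H^0(\GC)\to E_1^{2,-1}(1)$ is injective. Combined with surjectivity from Lemma \ref{lemma:onemap}, this yields that $(-)_1$ is an isomorphism. The factorization $\Omega=(-)_1\circ(\pi\circ i)$ from Proposition \ref{prop:itsaniso}, with $\Omega$ and $(-)_1$ both isomorphisms, then forces $\pi\circ i$ to be an isomorphism as well, giving the full chain of identifications.

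More concretely, I would argue injectivity of $(-)_1$ as follows. Let $[A]\in \mf^2H^0(\GC)/\mf^3H^0(\GC)$, represented by a closed cochain $A\in \mf^2\GC$, and suppose $[(A)_1]=0$ in $E_1^{2,-1}(1)=H^1(\mf^2\icg(1)/\mf^3\icg(1),d_0)$. Choose $B\in \mf^2\icg(1)/\mf^3\icg(1)$ of degree $0$ with $d_0B=(A)_1$, and lift $B$ to $B'\in\mf^2\GC$ by turning the unique external vertex of $B$ into an internal vertex and symmetrizing over all vertex labellings so as to make them indistinguishable. Now decompose $dB'=Q_1+Q_2$ according to whether the differential acts on the previously-external vertex or on the originally-internal ones. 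The same valence/connectivity analysis as in Lemma \ref{lemma:onemap} and Proposition \ref{prop:itsaniso} shows that any vertex split at the previously-external vertex yields at least two vertices not directly connected to the vertex of maximal valence, so $Q_1\in \mf^3\GC$. Meanwhile, $Q_2$ mimics the $\icg(1)$-differential under the lifting and equals $\lambda A$ for some nonzero combinatorial constant $\lambda\in \mathbb{K}^\times$ coming from the symmetrization. Passing to cohomology gives $[A]=-\lambda^{-1}[Q_1]\in \mf^3H^0(\GC)$, hence $[A]=0$ in the quotient.

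I expect the only delicate point to be the bookkeeping in the last step, namely verifying that $B$ can be lifted so that $B'$ remains in $\mf^2\GC$ and that the part of $dB'$ coming from the previously-external vertex really always lands in $\mf^3\GC$, together with tracking the scalar $\lambda$. These are exactly the same checks as in the injectivity part of the proof of Proposition \ref{prop:itsaniso}, and the argument there extends without modification since it never used that $A$ was a specific bracket. Once this is in place, the corollary follows immediately from the factorization $\Omega=(-)_1\circ(\pi\circ i)$.
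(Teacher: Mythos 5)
Your proposal is correct, but it takes a genuinely different route from the paper. The paper's proof of this corollary is purely formal: it observes that $\Omega$ factors as the (injective) map $\bigoplus_{i,j}\mathbb{K}\cdot[s_{2i+1},s_{2j+1}]/\ker(\pi\circ i)\hookrightarrow \mf^2H^0(\GC)/\mf^3H^0(\GC)$ followed by the (surjective) map $(-)_1$, and concludes from $\Omega$ being an isomorphism that both factors are. As stated this is slightly too quick: an injection followed by a surjection composing to an isomorphism only yields a splitting of the middle term as $\im(f)\oplus\ker(g)$, so one still needs either injectivity of $(-)_1$ or surjectivity of the inclusion to close the argument. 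Your approach supplies exactly this missing ingredient, by rerunning the injectivity argument of Proposition \ref{prop:itsaniso} for an arbitrary class $[A]\in\mf^2H^0(\GC)/\mf^3H^0(\GC)$ rather than only for linear combinations of brackets; the deduction that the inclusion is then also an isomorphism via $\Omega=(-)_1\circ(\pi\circ i)$ is correct. The one point where ``verbatim'' is a mild overstatement is the identification $((A)_1)'=cA$ modulo $\mf^3\GC$: for a bracket, $\Omega(A)$ is an explicit combination of the graphs $D_{2i+1,2j+1}$, whereas for a general class you must invoke the classification from the proof of Lemma \ref{lemma:onemap} (bowtie graphs and two-loop graphs sharing an edge) and note that each such graph has a unique vertex of maximal valence, so that marking it as external and then un-marking is canonical and the scalar is uniform across the different graph shapes. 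With that observation in place your argument goes through, and it has the advantage of being self-contained: it does not rely on knowing in advance that the brackets $[s_{2i+1},s_{2j+1}]$ span $\mf^2H^0(\GC)/\mf^3H^0(\GC)$.
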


\begin{proof}
The map $\Omega$ describes the composition,
\begin{equation*}
\bigoplus\limits_{i, j \geq 1}\mathbb{K}\cdot [s_{2i+1},s_{2j+1}]/\ker(\pi\circ i)\hookrightarrow\mf^2H^0(\GC)/\mf^3H^0(\GC) \twoheadrightarrow E_1^{2,-1}(1).
\end{equation*}
Since it is an isomorphism, the statement easily follows.
\end{proof}

\begin{proof}[Proof of Theorem \ref{thm:compatible}]
Clearly, the foregoing discussion implies,
\begin{align*}
\mf^2H^0(\GC)/\mf^3H^0(\GC)&\overset{\Omega}{\cong} E_1^{2,-1}(1)\overset{\Phi^{-1}}{\cong} \mf^2\grt_1/\mf^3\grt_1\\
\overline{[s_{2i+1},s_{2j+1}]}&\mapsto \text{ }\text{ }\theta_{2i,2j}\text{ }\text{ }\mapsto \{\sigma_{2i+1},\sigma_{2j+1}\}\mod \mf^3\grt_1,
\end{align*}
from which it follows that $\Phi^{-1}\circ\Omega=\omega$.
\end{proof}

\bibliographystyle{plain}
\bibliography{bibliography}

\end{document}